\newtheorem*{wconjecture}{Weak Conjecture}
\newtheorem*{iconjecture}{Intermediate Conjecture}
\newtheorem*{sconjecture}{Strong Conjecture}
\newtheorem*{gsconjecture}{Generic Strong Conjecture}
\numberwithin{equation}{section}
\newtheorem{theorem}{Theorem}[section]
\newtheorem{proposition}[theorem]{Proposition}
\newtheorem{lemma}[theorem]{Lemma}
\newtheorem{example}[theorem]{Example}
\newtheorem{remark}[theorem]{Remark}
\newtheorem{defn}[theorem]{Definition}
\theoremstyle{definition}
\newcommand{\Hom}{{\mathrm {Hom}}}
\newcommand{\Stab}{{\mathrm {Stab}}}
\newcommand{\spn}{{\mathrm {span}}}
\newcommand{\mult}{{\mathrm {mult}}}
\newcommand{\Mat}{{\mathrm {Mat}}}
\newcommand{\Cat}{{\mathsf{Cat}}}
\newcommand{\Cox}{{\mathsf{Cox}}}
\newcommand{\Park}{{\mathsf{Park}}}
\newcommand{\symm}{{\mathfrak{S}}}
\newcommand{\CC}{{\mathbb {C}}}
\newcommand{\ZZ}{{\mathbb {Z}}}
\newcommand{\RR}{{\mathbb{R}}}
\newcommand{\AAA}{{\mathbb{A}}}
\newcommand{\LLL}{{\mathcal{L}}}
\newcommand{\RRR}{{\mathcal{R}}}
\newcommand{\VVV}{{\mathcal{V}}}
\newcommand{\WWW}{{\mathcal{W}}}
\newcommand{\ZZZ}{{\mathcal{Z}}}
\newcommand{\UUU}{{\mathcal{U}}}
\newcommand{\CCC}{{\mathcal{C}}}
\newcommand{\xx}{{\mathbf {x}}}
\begin{document}

\title[Evidence for parking conjectures]
{Evidence for parking conjectures}

\author{Brendon Rhoades}
\address
{Deptartment of Mathematics \newline \indent
University of California, San Diego \newline \indent
La Jolla, CA, 92093-0112, USA}
\email{bprhoades@math.ucsd.edu}

\begin{abstract}
Let $W$ be an irreducible real reflection group.
Armstrong, Reiner, and the author presented a model for parking functions attached to $W$ \cite{ARR} and 
made three increasingly strong conjectures about these objects.  The author generalized these
parking objects and conjectures to the Fuss-Catalan level of generality \cite{Rhoades}.  Even the weakest of these 
conjectures would uniformly imply a collection of facts in Coxeter-Catalan theory which are at present understood
only in a case-by-case fashion.  We prove that when $W$ belongs to
any of the infinite families ABCDI, the strongest of these conjectures
is generically true.
\end{abstract}

\keywords{noncrossing partition, parking function, reflection group}
\maketitle

\section{Introduction}
\label{Introduction}

The purpose of this paper is to announce evidence supporting a family of conjectures appearing in \cite{ARR} and 
\cite{Rhoades} related to generalizations of parking functions from the symmetric group $\symm_n$
to an irreducible real reflection group $W$.
Our most important result is that the strongest of these conjectures holds generically
whenever $W$ is not of exceptional type.
Let us give some background on and motivation for 
these conjectures, deferring precise statements of definitions and results to Section~\ref{Background}.

A {\em (classical) parking function of size $n$} is a length $n$ sequence $(a_1, \dots, a_n)$ of positive integers
whose nondecreasing rearrangement $(b_1 \leq \cdots \leq b_n)$ satisfies $b_i \leq i$ for all $1 \leq i \leq n$.
The set $\Park_n$ of parking functions of size $n$ carries a natural action of the symmetric group $\symm_n$ given by
$w.(a_1, \dots, a_n) := (a_{w(1)}, \dots, a_{w(n)})$ for $w \in \symm_n$
and $(a_1, \dots, a_n) \in \Park_n$.
Parking functions were introduced by Konheim and Weiss in computer science \cite{KW}, but have received a great deal of attention
in algebraic combinatorics \cite{BergetRhoades, GH, Haiman}.

Parking functions have a natural Fuss generalization.  Throughout this paper, we fix a choice
$k \in \ZZ_{> 0}$ of Fuss parameter.  A {\em (classical) Fuss parking function of size $n$}
is a length $n$ sequence $(a_1, \dots, a_n)$ of positive integers whose nondecreasing
rearrangement $(b_1 \leq \cdots \leq b_n)$ satisfies $b_i \leq k(i-1) + 1$ for all $1 \leq i \leq n$.
The symmetric group $\symm_n$ acts on the set $\Park_n(k)$ of Fuss parking functions by 
subscript permutation; when $k = 1$ one recovers $\Park_n(1) = \Park_n$.

In \cite{ARR}, Armstrong, Reiner, and the author presented two generalizations, one algebraic
and one combinatorial, 
of parking functions
which are attached to any irreducible real reflection group $W$. 
Let $h$ be the Coxeter number of $W$.
 The algebraic generalization
$\Park^{alg}_W$ was defined as a certain quotient $\CC[V]/(\Theta - \xx)$ of the coordinate ring
$\CC[V]$ of the reflection representation $V$, where $(\Theta - \xx)$ is an inhomogeneous deformation
of an ideal $(\Theta) \subset \CC[V]$ arising from a homogeneous system of parameters $\Theta$ of degree
$h+1$ carrying $V^*$.  
The combinatorial
generalization $\Park^{NC}_W$ was  defined using a certain
$W$-analog of noncrossing set partitions \cite{BradyWatt, Reiner}.  
The combinatorial model $\Park^{NC}_W$ is easier to visualize and has connections with
$W$-noncrossing partitions, but the algebraic model $\Park^{alg}_W$ is easier to understand in a type-uniform
fashion.

The combinatorial parking space $\Park^{NC}_W$ and the algebraic space $\Park^{alg}_W$
carry actions of not just the reflection group $W$, but also the product  $W \times \ZZ_h$ of $W$ with 
an order $h$ cyclic group.
Armstrong, Reiner, and the author made a sequence of conjectures (Weak, Intermediate, and Strong) of increasing
strength about this action \cite{ARR}. 
We refer to these collectively as the Main Conjecture.

The Weak Conjecture gives a character formula 
for the (permutation) action of $W \times \ZZ_h$ on the combinatorial parking space
$\Park^{NC}_W$.  The Intermediate and Strong Conjectures assert a strong form of isomorphism
$\Park^{NC}_W \cong V^{\Theta}$ between the combinatorial parking space and a
``parking locus" $V^{\Theta}$ attached to the algebraic parking space $\Park^{alg}_W$.
The Intermediate Conjecture asserts that this isomorphism holds for one particular choice
of the ``parameter" $\Theta$, whereas the Strong Conjecture asserts that any choice of $\Theta$ would 
give our isomorphism.  Even the Weak Conjecture uniformly implies a collection of uniformly stated facts in
Coxeter-Catalan theory which are at present only understood in a case-by-case fashion
(see Subsection~\ref{Subsection:Main Conjecture} for a statement of these facts).

This setup was extended to the Fuss setting in \cite{Rhoades}.  The
$k$-$W$-combinatorial and algebraic parking spaces
$\Park^{NC}_W(k)$ and $\Park^{alg}_W(k)$ were defined and specialize
as $\Park^{NC}_W(1) = \Park^{NC}_W$ and $\Park^{alg}_W(1) = \Park^{alg}_W$.
Both $\Park^{NC}_W(k)$ and $\Park^{alg}_W(k)$ carry actions of the product
group $W \times \ZZ_{kh}$. 
The definition of $\Park^{alg}_W(k)$ depends on a h.s.o.p. $\Theta$ of degree $kh+1$ carrying $V^*$ and
to any such h.s.o.p. $\Theta$ we have an associated parking locus $V^{\Theta}(k)$.
The Fuss analog of the Main Conjecture (in its Weak, Intermediate, and Strong incarnations) is presented.

\begin{table}
\centering
\begin{tabular}{c | c | c}
type & $k = 1$ & $k \geq 1$ \\
\hline
$A_1, A_2$ & Strong & Strong \\
%$A_3$ & Weak & Weak \\
$A_n, n \geq 3$ & Weak & Weak \\
$B_n/C_n$ & Intermediate & Intermediate \\
$D_n$ & Intermediate & Intermediate \\
$I_2(m), m \geq 5$ & Strong & Intermediate \\
$F_4, H_3, H_4, E_6$ & Weak & Open \\
$E_7, E_8$ & Open & Open
\end{tabular}
\begin{tabular}{c | c | c}
type & $k = 1$ & $k \geq 1$ \\
\hline
$A_1, A_2$ & Strong &  Strong \\
%$A_3$ & Strong & Generic Strong \\
$A_n, n \geq 3$ & Generic Strong & Generic Strong \\
$B_n/C_n$ & Generic Strong & Generic Strong \\
$D_n$ & Generic Strong & Generic Strong \\
$I_2(m), m \geq 5$ & Strong & Generic Strong \\
$F_4, H_3, H_4, E_6$ & Weak & Open \\
$E_7, E_8$ & Open & Open
\end{tabular}
\vspace{0.2in}
\caption{The strongest version of the Main Conjecture known in each type, before this paper (left)
 and after (right).  Here $k \geq 1$ is our Fuss parameter.}
\label{truth}
\end{table}

The prior progress on Main Conjecture is presented on the left of Table~\ref{truth}.
The assertions for $k = 1$ are proven in \cite{ARR} and the assertions
for $k \geq 1$ are proven in \cite{Rhoades}.  While these proofs are (of course) case-by-case,
uniform evidence for the Main Conjecture in any type $W$ has been discovered 
which identifies certain ``components" of $\Park^{NC}_W(k)$ and $V^{\Theta}(k)$.
%The Weak Conjecture in types E$_7$ and E$_8$ for $k = 1$ is a finite statement and its open status
%reflects a lack of computational
%power.  The openness of the Weak Conjecture in exceptional types for $k \geq 1$ stems from the fact
%that it is {\it a priori} an infinite statement as the Fuss parameter varies.

Perhaps the most striking feature of the left of Table~\ref{truth} is that only the Weak
Conjecture is known in type A whereas stronger forms are known for the other infinite families 
BCDI.  That is, the state of knowledge {\it for the symmetric group} is lacking relative to all
other infinite families.  The reason for this is that relevant h.s.o.p.'s $\Theta$ in type A are harder
to write down, making the  parking loci
$V^{\Theta}(k)$
attached to symmetric groups
harder to access.  This is a rare instance where the case of the symmetric group is the most difficult among
the real reflection groups!  
The first main contribution of this paper will remedy this situation.

\begin{theorem}
\label{intermediate-type-a-intro}
The Intermediate Conjecture is true in type A for any Fuss parameter $k \geq 1$.
\end{theorem}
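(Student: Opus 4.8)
The plan is to prove the Intermediate Conjecture in type $A_{n-1}$ by exhibiting one convenient homogeneous system of parameters $\Theta$ of degree $kn+1$ carrying $V^*$ for $\symm_n$ and computing the associated parking locus $V^\Theta(k)$ explicitly enough to identify it, as a $\symm_n \times \ZZ_{kn}$-set, with the combinatorial parking space. First I would fix the standard model $V = \{(x_1, \dots, x_n) \in \CC^n : x_1 + \cdots + x_n = 0\}$, so that $\CC[V] = \CC[x_1, \dots, x_n]/(x_1 + \cdots + x_n)$ with $\symm_n$ permuting the $x_i$, and recall from \cite{ARR} and \cite{Rhoades} that under this identification $\Park^{NC}_{\symm_n}(k)$ unwinds to the set $\Park_n(k)$ of classical Fuss parking functions, with $\symm_n$ acting by permutation of entries and $\ZZ_{kn}$ acting by the cyclic (and combinatorially nonobvious) action induced by Coxeter-element rotation. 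Because the Intermediate Conjecture only asks for \emph{some} choice of parameter, it then suffices to produce $\Theta$ together with a $\symm_n \times \ZZ_{kn}$-equivariant bijection $\Park_n(k) \xrightarrow{\ \sim\ } V^\Theta(k)$ and to verify that the parking scheme is reduced, so that the bijection upgrades to the strong isomorphism demanded by the conjecture.

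Producing $\Theta$ is the crux, and it is precisely here that type A has resisted. Since $\Theta$ must be homogeneous of degree $kn+1$, the naive candidate $\theta_i = x_i^{kn+1} - \tfrac1n\sum_j x_j^{kn+1}$ cannot be repaired by adding lower-degree terms, and it genuinely fails to be an h.s.o.p.\ for some $n$ (already for $n=5$, $k=1$): on $V$ it picks up spurious common zeros coming from vanishing sums of $n$ many $(kn+1)$-st roots of unity. I would instead take $\theta_i$ to be $x_i^{kn+1}$ corrected by honestly multivariate homogeneous terms of degree $kn+1$ that are symmetric in $\{x_j : j \neq i\}$, so that $\symm_n$-equivariance and $\sum_i\theta_i = 0$ persist and $\spn(\theta_1, \dots, \theta_n) \cong V^*$ is maintained, with the correction tuned so that no spurious common zero survives. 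Proving that the corrected $\Theta$ remains a system of parameters is the step I expect to be the main obstacle: the tension is to choose $\Theta$ special enough that its parking locus is computable yet generic enough to be an h.s.o.p.\ at all, and verifying the latter should reduce, after eliminating the common symmetric part of a hypothetical solution, to showing that a certain one-parameter family of one-variable degree-$(kn+1)$ equations never has $n$ of its roots summing to zero except trivially.

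With such a $\Theta$ in hand I would solve the inhomogeneous system $\theta_i(v) = x_i(v)$, $1 \le i \le n$, cutting out $V^\Theta(k)$. The corrections will be arranged so that, in suitable coordinates, the system decouples into $n$ degree-$(kn+1)$ equations linked only through $x_1 + \cdots + x_n = 0$; the expected outcome is that the $(kn+1)^{n-1} = |\Park_n(k)|$ solutions are simple and are naturally indexed by sequences in $(\ZZ/(kn+1))^n$ of total residue zero, which the Fuss analog of Pollak's cycle-lemma argument places in bijection with $\Park_n(k)$. Pinning down this bijection, together with the simplicity of the solutions --- hence the reducedness of the parking scheme --- is the combinatorial heart of the argument.

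Finally I would check equivariance. The $\symm_n$-equivariance is automatic, since permuting the coordinates of a solution $v$ permutes the entries of the associated parking function. For $\ZZ_{kn}$, scaling $v \mapsto \zeta v$ with $\zeta^{kn} = 1$ preserves $V^\Theta(k)$ because $\theta_i$ is homogeneous of degree $kn+1 \equiv 1 \pmod{kn}$, so $\theta_i(\zeta v) = \zeta\,\theta_i(v) = \zeta\, x_i(v) = x_i(\zeta v)$; matching this scaling action with the cyclic action on $\Park_n(k)$ is a finite verification, guided by the cyclic sieving phenomenon for Fuss parking functions, which follows from the Weak Conjecture (known in type A by \cite{ARR}, \cite{Rhoades}). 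Once the equivariant bijection is established and $V^\Theta(k)$ is known to be reduced, it yields the isomorphism $\Park^{NC}_{\symm_n}(k) \cong V^\Theta(k)$ asserted by the Intermediate Conjecture, for every Fuss parameter $k \ge 1$.
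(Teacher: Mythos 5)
Your proposal and the paper's actual proof diverge at the decisive step, and the direction you pick is the one the paper explicitly identifies as the obstruction in type A. You propose to construct a ``convenient'' explicit $W$-equivariant h.s.o.p.\ $\Theta$ of degree $kn+1$ for $\symm_n$ and then solve the inhomogeneous system $\Theta(v)=v$ by hand, hoping that suitable corrections to $x_i^{kn+1}$ can be tuned so the system decouples. But you never produce such a $\Theta$, never show that any correction both kills the spurious common zeros and stays an h.s.o.p., and never solve the system; you flag the first of these yourself as ``the step I expect to be the main obstacle.'' That obstacle is real and is the reason type A lagged behind BCDI: the known constructions (Kraft's inductive family in \cite{Haiman}, Chmutova--Etingof's power-series one in \cite{CE}) have no closed form from which $V^{\Theta}(k)$ can be read off. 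There is also no reason to expect the zero set of a genuinely coupled degree-$(kn+1)$ regular sequence on $V$ to be indexed by residue-zero sequences in $(\ZZ/(kn+1))^n$; that is the description of the crystallographic finite torus $Q/(kn+1)Q$, not of an arbitrary parking locus.

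The paper never constructs an explicit $\Theta$, never computes $V^{\Theta}(k)$, and never writes down a bijection. It takes an \emph{arbitrary} h.s.o.p.\ $\Theta$ whose locus is reduced (one exists by Etingof's Theorem~\ref{etingof-theorem}) and applies the sieve Lemma~\ref{g-set-lemma}: a $G$-equivariant bijection between finite $G$-sets follows once they have equinumerous $H$-fixed points for every subgroup $H$ arising as a point stabilizer. Lemmas~\ref{locus-stabilizer-lemma} and \ref{combinatorial-stabilizer-lemma} show the relevant stabilizers all have the form $H = \langle W_X \times \{e\}, (w, g^d)\rangle$; Lemma~\ref{theta-count-lemma} (B\'ezout) together with Lemma~\ref{eigenspace-lemma} gives $|V^{\Theta}(k)^H| = (kn+1)^{\dim(X \cap E(w,\zeta^{-d}))}$ using only the degree, $W$-equivariance, and reducedness of $\Theta$; and Lemmas~\ref{admissible-function-count}--\ref{type-a-count} count $|\Park^{NC}_{\symm_n}(k)^H|$ combinatorially and show it equals the same quantity. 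This fixed-point-counting route sidesteps the explicit-$\Theta$ wall entirely, and as a bonus applies to \emph{every} reduced $\Theta$, which is what feeds into Theorem~\ref{equivalence-intro}. Finally, your closing step --- ``a finite verification, guided by the cyclic sieving phenomenon'' for $\ZZ_{kn}$-equivariance --- cannot work as stated: CSP is a statement about cardinalities of fixed sets and does not by itself upgrade to an equivariant bijection (that is precisely the gap between the Weak and the Intermediate Conjectures), and there is nothing finite about checking it for all $n$ and $k$.
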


The basic idea in the proof of Theorem~\ref{intermediate-type-a-intro} is to 
pick an arbitrary h.s.o.p. $\Theta$ of degree $kh+1$ carrying $V^*$ such that 
the corresponding parking locus $V^{\Theta}(k)$ is reduced. 
While $V^{\Theta}(k)$ is hard to understand explicitly,
it can be understood indirectly by considering an augmented version of the 
intersection lattice $\LLL$ attached to $W$ which includes eigenspaces 
$E(w, \xi)$ of elements $w \in W$ for eigenvalues $\xi \neq 1$.   
%The relevant 
%lemmata have uniform statements and proofs, and match up with the combinatorics of 
%$\Park^{NC}_W(k)$ when $W = \symm_n$.  
%In the special case of  $\symm_4$, this reasoning actually suffices to prove the
%Strong Conjecture when the Fuss parameter equals $1$.

%\begin{proposition}
%\label{strong-type-a-three-intro}
%The Strong Conjecture is true in type A$_3$ at Fuss parameter $k = 1$.
%\end{proposition}

Our next  contribution is yet another layer of the Main Conjecture which we 
term the {\em Generic Strong Conjecture}.  The four flavors of the Main Conjecture are related by
\begin{center}
Strong  $\Rightarrow$ Generic Strong  $\Rightarrow$ Intermediate
 $\Rightarrow$ Weak,
\end{center}
so that the Generic Strong version sits between the Strong and Intermediate versions.  

The 
Generic Strong Conjecture is easy to conceptualize.  The Strong Conjecture
states that for {\it any}  h.s.o.p. $\Theta$ of degree $kh+1$ carrying $V^*$, we have our `strong
isomorphism' $\Park^{NC}_W(k) \cong V^{\Theta}(k)$.
The Intermediate Conjecture asserts that {\it there exists} a choice of $\Theta$ so that our isomorphism
$\Park^{NC}_W(k) \cong V^{\Theta}(k)$ holds.
The Generic Strong Conjecture states that for a {\it generic} choice of $\Theta$ (understood in an appropriate
Zariski sense), we have $\Park^{NC}_W(k) \cong V^{\Theta}(k)$.  While these are three {\it a priori}
distinct conditions, we will prove the following statement uniformly.

\begin{theorem}
\label{equivalence-intro}
The Intermediate and Generic Strong Conjectures are equivalent for any reflection
group $W$ and any Fuss parameter $k \geq 1$.
\end{theorem}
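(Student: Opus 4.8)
The plan is to run a semicontinuity argument over the space of parameters $\Theta$, combined with the classical fact that a finite group acting on a finite set is determined up to equivariant isomorphism by its \emph{table of marks}. The implication ``Generic Strong $\Rightarrow$ Intermediate'' is essentially a tautology: h.s.o.p.'s $\Theta$ of degree $kh+1$ carrying $V^*$ exist, so any property holding for a Zariski-dense family of such $\Theta$ holds in particular for one of them. All of the content lies in the reverse implication ``Intermediate $\Rightarrow$ Generic Strong'', which the rest of this discussion addresses.

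Assume the Intermediate Conjecture and fix one $\Theta_0$ realizing the strong isomorphism $\Park^{NC}_W(k) \cong V^{\Theta_0}(k)$ of $W \times \ZZ_{kh}$-sets. First I would assemble all competitors into a single parameter variety: the injective $W$-equivariant linear maps $V^* \hookrightarrow \CC[V]_{kh+1}$ whose image is an h.s.o.p. form a nonempty Zariski-open subset $\mathcal{X}$ of the finite-dimensional vector space $\Hom_W(V^*, \CC[V]_{kh+1})$, so $\mathcal{X}$ is irreducible. Over $\mathcal{X}$ sits a universal family $\pi \colon \mathcal{V} \to \mathcal{X}$ whose fiber over $\Theta$ is the parking locus $V^{\Theta}(k)$; because $\Theta$ is an h.s.o.p., $\pi$ is finite and flat with every fiber a scheme of length $(kh+1)^{\dim V}$, and $\mathcal{V}$ carries an action of $W \times \ZZ_{kh}$ over $\mathcal{X}$. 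The goal is to show that the isomorphism type of the fiber, as a $W \times \ZZ_{kh}$-set, is constant on a Zariski-dense open subset of $\mathcal{X}$.

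The key steps are these. First, the locus $U \subseteq \mathcal{X}$ over which $V^{\Theta}(k)$ is reduced is Zariski-open --- it is the non-vanishing locus of the discriminant of the finite flat map $\pi$, equivalently the locus where $\pi$ is \'etale --- and it is nonempty because $\Theta_0 \in U$ (a bijection onto the set $\Park^{NC}_W(k)$ forces the scheme $V^{\Theta_0}(k)$ to be reduced); hence $U$ is dense and irreducible, in particular connected. Second, for each subgroup $H \leq W \times \ZZ_{kh}$, the fixed-point subscheme $\mathcal{V}^H$ restricted over $U$ is finite \'etale over $U$: after a finite \'etale base change trivializing the cover $\mathcal{V}|_U \to U$ together with its $H$-action, it becomes the union of exactly those sheets that $H$ fixes, hence a union of connected components of $\mathcal{V}|_U$, so its degree $|(V^{\Theta}(k))^H|$ is independent of $\Theta \in U$. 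Third, it follows that the whole table of marks $\Theta \mapsto \left( |(V^{\Theta}(k))^H| \right)_H$ is constant on $U$, equal to its value at $\Theta_0$, namely the table of marks of $\Park^{NC}_W(k)$; since the table-of-marks matrix is invertible over $\mathbb{Q}$, a finite $W \times \ZZ_{kh}$-set is determined up to isomorphism by these numbers, so $V^{\Theta}(k) \cong \Park^{NC}_W(k)$ as $W \times \ZZ_{kh}$-sets for every $\Theta$ in the dense open set $U$. This is precisely the Generic Strong Conjecture.

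The step I expect to be the real obstacle --- or rather, the one requiring the right framing --- is the second. The \emph{ordinary} permutation character comes almost for free: $\Theta \mapsto \mathrm{tr}\!\left( g \mid \pi_* \mathcal{O}_{\mathcal{V}} \right)$ is a regular function on the irreducible variety $\mathcal{X}$ taking values among the finitely many sums of $(kh+1)^{\dim V}$ roots of unity of order dividing $|W \times \ZZ_{kh}|$, hence is constant on all of $\mathcal{X}$; but that recovers only the character of $\Park^{NC}_W(k)$, i.e. the Weak Conjecture, which is strictly weaker than a $W \times \ZZ_{kh}$-equivariant bijection (two $G$-sets can share a permutation character without being isomorphic). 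Upgrading from the character to the full table of marks is what forces the geometric input: that the family is finite flat, that it is \'etale over $U$, and that fixed loci of subgroups are open and closed inside an \'etale cover. Once these are in place, the passage through Burnside's table of marks is routine. It also remains to confirm that whatever is bundled into the word ``isomorphism'' in \cite{ARR, Rhoades} beyond a $W \times \ZZ_{kh}$-equivariant bijection of sets is itself detected by data locally constant along $\pi$; I expect this to present no essential difficulty.
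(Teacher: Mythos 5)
Your proposal is correct, and it takes a genuinely different route from the paper. The paper proves Theorem~\ref{slightly-stronger} (that $\mathcal{R}$ has nonempty Zariski interior) via the Jacobian criterion, a constructibility argument, and the multidimensional Rouch\'e theorem, and then deduces the equivalence by showing $\mathcal{R}$ is path-connected in the Euclidean topology and analytically transporting the $W \times \ZZ_{kh}$-orbit structure along a continuous path $t \mapsto \Theta_t$ in $\mathcal{R}$, with Rouch\'e keeping the $(kh+1)^n$ zeros separated along the way. You instead set up the universal family $\pi\colon \mathcal{V} \to \mathcal{X}$, identify $\mathcal{R}$ with the \'etale locus of a finite flat morphism, and read off constancy of all the marks $|(V^\Theta(k))^H|$ over the connected \'etale locus by purely scheme-theoretic means, finishing with the invertibility of the Burnside table of marks. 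Your approach is strictly stronger in one respect: it shows that $\mathcal{R}$ itself is Zariski open (being the complement of the discriminant divisor), not merely that it contains a nonempty Zariski open set as the paper proves. The trade-off is the machinery: the paper's path-following argument is elementary, while yours leans on \'etale descent and the Burnside ring.

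Two hypotheses you assert in passing deserve a proof, as they are the algebraic analogues of the boundedness that Rouch\'e supplies analytically in the paper. First, \emph{finiteness} of $\pi$ over $\mathcal{X}$: this is not automatic from quasi-finiteness. The clean argument is via the projective closure of $\mathcal{V}|_{\mathcal{X}}$ in $\mathcal{X} \times \mathbb{P}^n$: the points at infinity of the fiber over $\Theta$ are the zeros of the top-degree part of $\theta_i - x_i$, i.e.\ the projective zero locus of $\Theta$, which is empty precisely because $\Theta$ is an h.s.o.p., so the closure adds nothing and $\pi$ is proper, hence finite. Second, \emph{flatness}: once $\pi$ is finite and the B\'ezout count gives every fiber length equal to $(kh+1)^n$, flatness follows because $\mathcal{X}$ is smooth (it is open in an affine space) and the Hilbert polynomial is constant. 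With these two points filled in, the rest of your argument --- that the \'etale locus equals the reduced locus, that $\mathcal{V}^H|_U \to U$ is a union of sheets of an \'etale cover and so has locally constant degree, and that the table of marks determines a finite $W \times \ZZ_{kh}$-set --- is sound.
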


The current status of the Main Conjecture is summarized on the right of Table~\ref{truth}.
The entries come from combining Theorems~\ref{intermediate-type-a-intro} and
\ref{equivalence-intro}.   Our proof of Theorem~\ref{equivalence-intro} will also show that 
the only obstacle to proving the Strong Conjecture given the Intermediate Conjecture is 
the proof of a purely algebraic statement having nothing to do with the combinatorics
of noncrossing partitions.  Namely, one would only need to show that for any h.s.o.p. $\Theta$ of degree $kh+1$
carrying $V^*$, the parking locus $V^{\Theta}(k)$ is reduced.
This gives significant evidence for the Strong Conjecture itself.

The proof of Theorem~\ref{equivalence-intro} is uniform, but not combinatorial.  One uses 
topological and analytic arguments to show that a certain collection of ``good" h.s.o.p.'s
$\Theta$ can be identified with a nonempty Zariski open subset $\UUU$ of the affine space
$\Hom_{\CC[W]}(V^*, \CC[V]_{kh+1})$ which parametrizes all $W$-equivariant 
polynomial functions $\Theta: V \rightarrow V$ which are homogenous of degree $kh+1$.
As a Zariski open subspace of an affine complex space, the set $\UUU$ is path connected 
in its Euclidean topology.
For any path $\gamma: [0, 1] \rightarrow \UUU$ sending $t$ to $\Theta_t$, one gets a parking locus 
$V^{\Theta_t}(k)$ for all $0 \leq t \leq 1$.  A continuity argument shows that one may 
``follow group actions along paths" to identify the $W \times \ZZ_{kh}$-set structures of these parking loci as
$t$ varies.

The remainder of this paper is structured as follows.
In {\bf Section~\ref{Background}} we review material on 
reflection groups and Coxeter-Catalan Theory, recall the main constructions of \cite{ARR, Rhoades},
and state the four flavors of the Main Conjecture.
In {\bf Section~\ref{Parking stabilizers}} we  
present a simple tool (Lemma~\ref{g-set-lemma}) for proving equivariant bijections of $G$-sets for any group $G$ and 
give uniform enumerative and algebraic results regarding the actions of 
$W \times \ZZ_{kh}$ on $\Park^{NC}_W(k)$ and $V^{\Theta}(k)$.
In {\bf Section~\ref{Parking on the symmetric group}} we specialize to type A and 
study the action of $\symm_n \times \ZZ_{kn}$ on $\Park^{NC}_{\symm_n}(k)$.  Using the theory developed in
Section~\ref{Parking stabilizers}, this will allow us to prove 
Theorem~\ref{intermediate-type-a-intro}.
We return to 
general type $W$ in {\bf Section~\ref{The Generic Strong Conjecture}} with a proof of Theorem~\ref{equivalence-intro}.
In particular, all of the arguments appearing in this paper are type A or uniform, and the only type A arguments appear in
Section~\ref{Parking on the symmetric group}.

\section{Background}
\label{Background}

\subsection{Notation for group actions}
Let $G$ be a finite group and let $\mathcal{S}$ and $\mathcal{T}$ be finite $G$-sets.  We write 
$\mathcal{S} \cong_G \mathcal{T}$ to mean that there is a $G$-equivariant bijection
$\varphi: \mathcal{S} \rightarrow \mathcal{T}$.  If $V$ and $W$ are finite-dimensional $\CC[G]$-modules, we write
$V \cong_{\CC[G]} W$ to mean that there is a $G$-equivariant linear isomorphism
$\varphi: V \rightarrow W$.  If $\mathcal{S}$ and $\mathcal{T}$ are finite $G$-sets, we have that 
$\mathcal{S} \cong_G \mathcal{T}$ implies $\CC[\mathcal{S}] \cong_{\CC[G]} \CC[\mathcal{T}]$, but the converse does not 
hold in general.

\subsection{Reflection groups}
Let $W$ be a reflection group acting on its reflection representation $V$.
In this paper, all reflection groups will be real and irreducible.
It will be convenient to replace $V$ with its complexification $V_{\CC} = \CC \otimes_{\RR} V$
and regard $V$ as a complex vector space.
We let $n := \dim(V)$ be the {\it rank} of $W$.

Let $\Phi \subset V$ denote a root system associated to $V$ and let 
$\Phi^+ \subset \Phi$ be a choice of positive system within $\Phi$.
Let $\Pi \subseteq \Phi^+$ be the corresponding choice of simple system.
For any $\alpha \in \Phi$,  let
$H_{\alpha} \subset V$ denote the orthogonal hyperplane
$H_{\alpha} := \{v \in V \,:\, \langle v, \alpha \rangle = 0\}$.
The hyperplane arrangement $\Cox(W) := \{ H_{\alpha} \,:\, \alpha \in \Phi^+\}$ is called
the {\it Coxeter arrangement} of $W$.

For any $\alpha \in \Phi^+$,  let $t_{\alpha} \in W$ denote the orthogonal reflection through 
the hyperplane $H_{\alpha}$.  The set
$S := \{t_{\alpha} \,:\, \alpha \in \Pi \}$ of {\it simple reflections} generates $W$
and turns the pair $(W, S)$ into a Coxeter system.
Let $T = \{ t_{\alpha} \,:\, \alpha \in \Phi^+\}$ denote the set of 
{\em all} reflections in $W$, simple or otherwise.

If $S = \{s_1, s_2, \dots, s_n\}$, a {\it Coxeter element} in $W$ is a 
$W$-conjugate of the product $s_1 s_2 \cdots s_n$ (where the simple reflections are taken in some order).  
It can be shown that any two
Coxeter elements in $W$ are conjugate.  We fix a choice of Coxeter element $c \in W$.
We let $h$ denote the order of the group element $c \in W$; the number $h$ is the 
{\it Coxeter number} of $W$ and is independent of our choice of $c$.

\begin{example}
\label{type-a-basics}
In type A$_{n-1}$, we may identify $W$ with the symmetric group $\symm_n$.  The reflection representation
$V$ is the $(n-1)$-dimensional quotient
$V = \CC^n/\langle (1, 1, \dots, 1) \rangle$
of the defining action of $\symm_n$ on $\CC^n$ by the copy of the trivial representation given by 
constant vectors in $\CC^n$.

If $e_i$ denotes the image in $V$ of the $i^{th}$ coordinate vector in $\CC^n$, the root system
$\Phi$ is given by $\Phi = \{ e_i - e_j \,:\, 1 \leq i \neq j \leq n \} \subset V$.  The standard choice
of positive system $\Phi^+ \subset \Phi$ is
$\Phi^+ = \{e_i - e_j \,:\, 1 \leq i < j \leq n\}$. The corresponding simple system is 
$\Pi = \{e_i - e_{i+1} \,:\, 1 \leq  i \leq n-1\}$.  The Coxeter arrangement $\Cox(\symm_n)$
is the image in $V$ of the
standard braid arrangement $\{x_i - x_j = 0 \,:\, 1 \leq i < j \leq n\}$ in $\CC^n$, where $x_i$ is the 
$i^{th}$ standard coordinate function.

The reflection $t_{\alpha_{i,j}} \in \symm_n$ corresponding to a given positive root
$\alpha_{i, j} = e_i - e_j$ is the transposition $(i, j) \in \symm_n$.  We get that 
$S = \{ (i, i+1) \,:\, 1 \leq i \leq n-1\}$ and
$T = \{ (i, j) \,:\, 1 \leq i < j \leq n\}$.  The usual choice of Coxeter element 
is $c = (1, 2)(3, 4) \cdots (n-1,n) = (1, 2, \dots, n) \in \symm_n$.  The Coxeter number is $h = n$.
\end{example}

\subsection{The algebraic parking space and parking loci}

Let $k \in \ZZ_{> 0}$ be a fixed choice of Fuss parameter and let
$\ZZ_{kh}$ denote the cyclic group of order $kh$.  We let
$g \in \ZZ_{kh}$ be a fixed choice of distinguished generator and let
$\zeta = e^{\frac{2 \pi i}{kh}} \in \CC$ be a primitive $kh^{th}$ root of unity.

Let $\CC[V]$ denote the coordinate ring of polynomial functions $V \longrightarrow \CC$.  
If we fix a basis $x_1, \dots, x_n$ of the dual vector space
$V^* = \Hom_{\CC}(V, \CC)$, we may identify
$\CC[V] = \CC[x_1, \dots, x_n]$.
The ring 
$\CC[V]$ has a natural polynomial grading $\CC[V] = \bigoplus_{d \geq 0} \CC[V]_d$, so that
$\CC[V]_d$ can be thought of as polynomial functions $V \longrightarrow \CC$ of homogeneous
degree $d$.
We consider the graded $W \times \ZZ_{kh}$-module on $\CC[V]$ given as follows.
The group $W$ acts by linear substitutions.  That is, we have
$(w.f)(v) := f(w^{-1}.v)$.  The distinguished generator $g$ of the cyclic group 
$\ZZ_{kh}$ scales by $\zeta^d$ in homogeneous degree $d$.
%These actions commute and respect the grading.

For a positive integer $d$,
a {\it homogeneous system of parameters (h.s.o.p.) of degree $d$ carrying $V^*$}
is a sequence of polynomials $\theta_1, \dots, \theta_n \in \CC[V]$ 
(where $n = \dim(V)$ is the rank)
such that
the following conditions hold.
\begin{itemize}
\item We have that $\theta_1, \dots, \theta_n \in \CC[V]_d$, i.e., the $\theta_i$
are homogeneous of degree $d$,
\item The zero locus cut out by $\theta_1 = \cdots = \theta_n = 0$ consists only
of the origin $\{0\}$.  Equivalently, the $\CC$-vector space
$\CC[V]/(\Theta) := \CC[V] / (\theta_1, \theta_2, \dots, \theta_n)$ is finite-dimensional.
\item  The $\CC$-linear span
$\spn_{\CC}\{\theta_1, \dots, \theta_n\}$ is stable under the action of $W$.
\item  The $\CC$-linear span
$\spn_{\CC}\{\theta_1,  \dots, \theta_n\}$ is isomorphic to $V^*$ as a
$\CC[W]$-module.
\end{itemize}
%In the simplest  case of type A$_1$, we can make the identifications $W = \{ e, w \}$ acting
%on $\CC[V] = \CC[x]$ via $w.x^d = (-1)^d x^d$.  There exists an h.s.o.p. of degree $d$ carrying 
%$V^*$ if and only if the degree $d$ is odd, in which case this h.s.o.p. is given
% by $\theta = \alpha x^d$, where $\alpha \in \CC^{\times}$ is a nonzero scalar.

In this paper we will be interested in h.s.o.p.'s of degree $d = kh+1$ carrying $V^*$.
These are uniformly known to exist by deep and subtle results from the 
theory of rational Cherednik algebras.
In fact, Cherednik algebras can be used
to produce an h.s.o.p. $\Theta$ of degree $kh+1$ carrying $V^*$ which is unique up to scaling.

More precisely, for any vector $v \in V$ there is a certain differential operator
$D_v : \CC[V] \rightarrow \CC[V]$ called a {\it Dunkl operator} (see \cite[Appendix]{ARR} for its  definition).
If follows from Gordon's work on Cherednik algebras that there exists a $W$-equivariant linear map
$\Theta: V^* \hookrightarrow \CC[V]_{kh+1}$
whose image is annihilated by all the Dunkl operators $\{D_v \,:\, v \in V\}$ \cite{Gordon, GordonGriffeth}.  
Griffeth \cite[Theorem 7.1]{Griffeth} proved that the map 
$\Theta$ is unique up to a nonzero scalar.
\footnote{The case of the symmetric group $\symm_n = G(1, 1, n)$ is not 
included in \cite[Theorem 7.1]{Griffeth},
but can be deduced from its statement.}
If $x_1, \dots, x_n$ is any basis of $V^*$, then $\Theta(x_1), \dots, \Theta(x_n)$ gives a h.s.o.p. 
of degree $kh+1$ carrying $V^*$.

We make no explicit use of Cherednik algebras in this paper, taking 
the existence of our
h.s.o.p.'s  as uniformly granted.  We will  refer to h.s.o.p.'s as in the above paragraph as
``coming from Cherednik algebras".

Observe that in the above situation of degree $d = kh+1$,
the ideal $(\Theta) \subset \CC[V]$ is stable under the action of $W \times \ZZ_{kh}$, so that
the quotient $\CC[V]/(\Theta)$ has the structure of a $W \times \ZZ_{kh}$-module. 
Bessis and Reiner \cite{BessisReiner} proved that the character 
$\chi: W \times \ZZ_{kh} \rightarrow \CC$ of the representation $\CC[V]/(\Theta)$ is given by the formula
\begin{equation}
\label{parking-character}
\chi(w, g^d) = (kh+1)^{\mult_w(\zeta^d)} 
%= \lim_{q \rightarrow \zeta^d} 
%\frac{\det_V(1 - q^{kh+1}w)}{\det_V(1-qw)}.
\end{equation}
Here $\mult_w(\zeta^d)$ denotes the multiplicity of $\zeta^d$ as an eigenvalue in the action of $w$
on $V$.

If $\theta_1,  \dots, \theta_n \in \CC[V]$ form a h.s.o.p. of degree $kh+1$ carrying $V^*$,
there exists an ordered basis $x_1,  \dots, x_n$ of $V^*$ such that the linear map
induced by the assignment $x_i \mapsto \theta_i$ is $W$-equivariant.
The idea of an algebraic parking space comes from the inhomogeneous deformation given by
replacing the ideal
$(\theta_1,  \dots, \theta_n)$
with the ideal
$(\theta_1 - x_1, \dots, \theta_n - x_n)$.
The following definition appears in \cite{ARR} when $k = 1$ and \cite{Rhoades} for general $k$.

\begin{defn}
Let $\theta_1,  \dots, \theta_n$ be a h.s.o.p. of degree $kh+1$ carrying $V^*$.  Fix an ordered
basis $x_1, x_2, \dots, x_n$ of $V^*$ such that the linear map induced by $x_i \mapsto \theta_i$
is $W$-equivariant.

The {\em parking locus} is the subscheme $V^{\Theta}(k)$ of $V$ cut out by the ideal
\begin{equation}
(\Theta - \xx) := (\theta_1 - x_1, \dots, \theta_n - x_n) \subset \CC[V].
\end{equation}
The {\em $k$-$W$-algebraic parking space} $\Park^{alg}_W(k)$ is the associated quotient 
representation of $W \times \ZZ_{kh}$ given by
\begin{equation}
\Park^{alg}_W(k) := \CC[V]/(\Theta - \xx).
\end{equation}
\end{defn}

In \cite[Proposition 2.11]{ARR} it is shown  that we have a module isomorphism
\begin{equation}
\CC[V]/(\Theta) \cong_{\CC[W \times \ZZ_{kh}]} \CC[V]/(\Theta - \xx) = \Park^{alg}_W(k), 
\end{equation}
so that our neither our choice of $\Theta$ nor our ideal deformation
affect module structure. 
\footnote{While the choice of $\Theta$ could {\it a priori} affect {\em ring} structure, we have not had
occasion to use the ring structures of $\CC[V]/(\Theta)$ or $\CC[V]/(\Theta - \xx)$
in our work.  For cleanliness of notation, we drop reference to $\Theta$
in the algebraic parking space $\Park^{alg}_W(k)$.  
At any rate, the parking loci $V^{\Theta}(k)$ will be the focus of this paper.}
As a result, the character $W \times \ZZ_{kh} \rightarrow \CC$ of the algebraic parking space
$\Park^{alg}_W(k)$ is also given by the Equation~\ref{parking-character}.

Parking loci will be most important for us when the deformed ideal $(\Theta - \xx)$ is reduced.
In this case,
the parking locus $V^{\Theta}(k)$ is a {\it set} $V^{\Theta}(k) \subset V$
and may be identified with the set of fixed points of the 
map $\Theta: V \longrightarrow V$ 
which sends a point with coordinates $(x_1, \dots, x_n)$ to a point with coordinates $(\theta_1, \dots, \theta_n)$.
This fixed point perspective explains the superscript notation in $V^{\Theta}(k)$.
The set $V^{\Theta}(k)$ carries a permutation action of $W \times \ZZ_{kh}$, where $W$ acts by linear substitutions
and the distinguished generator $g \in  \ZZ_{kh}$ scales by $\zeta$.

If the parking locus $V^{\Theta}(k)$ is reduced, we get a canonical identification
$V^{\Theta}(k) \cong_{\CC[W \times \ZZ_{kh}]} \Park^{alg}_W(k)$.
\footnote{Here we are using the fact the $W$ is {\em real}, so that its reflection representation is self-dual.}
Therefore, the $W \times \ZZ_{kh}$-set $V^{\Theta}(k)$
has permutation character given by Equation~\ref{parking-character}.
In particular, the parking locus $V^{\Theta}(k)$ contains $(kh+1)^n$ points.  
%This count also
%follows from B\'ezout's Theorem.  
The following result of Etingof shows that there exists a choice of
$\Theta$ such that $V^{\Theta}(k)$ is reduced.

\begin{theorem} (Etingof, see \cite[Appendix]{ARR})
\label{etingof-theorem}
Let $\Theta_0$ be the h.s.o.p. of degree $kh+1$ carrying $V^*$ coming from Cherednik algebras.
The parking locus $V^{\Theta_0}(k)$ is reduced, and so consists of $(kh+1)^n$ distinct points in $V$.
\end{theorem}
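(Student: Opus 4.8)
The plan is to reduce the statement to a Jacobian nonsingularity condition and then feed it into the representation theory of the rational Cherednik algebra. First, since $\theta_1, \dots, \theta_n$ is a homogeneous regular sequence in $\CC[V]$, the quotient $\CC[V]/(\Theta)$ is a zero-dimensional complete intersection of $\CC$-dimension $\prod_i \deg(\theta_i) = (kh+1)^n$; combined with the module isomorphism $\CC[V]/(\Theta - \xx) \cong_{\CC[W \times \ZZ_{kh}]} \CC[V]/(\Theta)$ recalled above, this shows that the Artinian ring $\CC[V]/(\Theta_0 - \xx)$ also has $\CC$-dimension $(kh+1)^n$. Hence $V^{\Theta_0}(k)$ is reduced if and only if it has $(kh+1)^n$ distinct points, if and only if its local ring at each point is a field, if and only if at every common zero $p$ of $\theta_1 - x_1, \dots, \theta_n - x_n$ the matrix $J(p) - I_n$ is invertible, where $J(x) := \bigl( \partial \theta_i / \partial x_j \bigr)_{i,j}$. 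The origin is always such a zero, and there $J(0) = 0$ (the $\theta_i$ are homogeneous of degree $kh+1 \geq 2$), so the origin contributes a reduced point; the real content is that $1$ is not an eigenvalue of $J(p)$ for any nonzero $\Theta_0$-fixed point $p$.

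The next step is to exploit the defining property of $\Theta_0$, namely that its image $\Theta_0(V^*) \subseteq \CC[V]_{kh+1}$ is annihilated by every Dunkl operator $D_v$. Identifying $V$ with $V^*$ via the invariant form (as $W$ is real), one has $t_\alpha \cdot \Theta_0(\alpha) = \Theta_0(t_\alpha \alpha) = -\Theta_0(\alpha)$, so $\Theta_0(\alpha)$ is $t_\alpha$-antiinvariant and hence divisible by the linear form $\langle \alpha, x \rangle$; write $\Theta_0(\alpha) = \langle \alpha, x \rangle\,\phi_\alpha(x)$ with $\phi_\alpha \in \CC[V]_{kh}$. Expanding $D_v \theta_i = 0$ and using the reflection formula for the action of $t_\alpha$ on the copy of $V^*$ spanned by the $\theta_i$, one obtains that $J(x)$ equals, up to a scalar depending only on $W$ and $k$, the sum $\sum_{\alpha \in \Phi^+} \langle \alpha, \alpha \rangle^{-1}\,\phi_\alpha(x)\,\alpha \otimes \alpha$ of rank-one symmetric terms. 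Euler's identity supplies the further relation $J(p)\,p = (kh+1)\,\Theta_0(p) = (kh+1)\,p$ at any fixed point $p$, so $p$ itself is an eigenvector of $J(p)$ with eigenvalue $kh+1 \neq 1$; the problem is to control the remaining $n-1$ eigenvalues, uniformly in $p$ and in $W$.

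For this last step I would pass to the rational Cherednik algebra $H_c(W)$ at the parameter $c$ for which the polynomial representation acquires singular vectors carrying $V^*$ in degree $kh+1$, which is the route of Etingof's appendix to \cite{ARR}. One knows (from the Cherednik-algebra results underlying the existence of $\Theta_0$) that $\CC[V]$ is the standard module with lowest weight the trivial representation, that the $\theta_i$ are singular vectors generating its maximal proper submodule, and that the homogeneous quotient $\CC[V]/(\Theta_0)$ is the irreducible module $L_c(\mathrm{triv})$, whose graded character is the one in Equation~\ref{parking-character} and whose dimension is $(kh+1)^n$. The claim is then that irreducibility of $L_c(\mathrm{triv})$ at this $c$ forces the inhomogeneous algebra $\CC[V]/(\Theta_0 - \xx)$ to be semisimple, hence — being a finite-dimensional commutative $\CC$-algebra — a product of copies of $\CC$, which is precisely reducedness of $V^{\Theta_0}(k)$: a non-reduced point $p$ would, after passing to the associated graded for the filtration by degree, yield structure on $L_c(\mathrm{triv})$ incompatible with its irreducibility, equivalently it would force the vanishing of a minor of $J(p) - I_n$ that the module structure at $c$ forbids. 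I expect this last implication — converting irreducibility of the homogeneous quotient into reducedness of the inhomogeneous deformation, uniformly across $W$ — to be the main obstacle; the length count and the Dunkl-operator computation are essentially formal once the existence results from Cherednik algebra theory are granted.
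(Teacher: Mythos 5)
There is a genuine gap at exactly the place you flag, and it is not a detail that can be patched along the lines you sketch. The paper does not reprove this statement --- it cites Etingof's appendix to \cite{ARR}, and only remarks that Etingof combines the Cartan-theoretic formula for the Dunkl operators with ``a Schur's Lemma argument.'' Your steps up through the Euler identity are sound and standard: the Jacobian criterion for reducedness of a zero-dimensional complete intersection, the factorization $\Theta_0(\alpha)=\langle\alpha,x\rangle\,\phi_\alpha$ forced by $s_\alpha$-antiinvariance, the rank-one expansion of $J(x)$ obtained by unwinding $D_v\theta_i=0$ (though the constants are really the orbit-dependent parameters $c_\alpha$, not a single scalar, so ``up to a scalar depending only on $W$ and $k$'' is slightly off outside simply-laced type), and the conclusion that at a fixed point $p$ the vector $p$ is a $J(p)$-eigenvector of eigenvalue $kh+1$. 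None of this rules out $1$ from the remaining $n-1$ eigenvalues, and that is the whole content of the theorem.

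The mechanism you propose for the missing step cannot work. The associated graded of $A=\CC[V]/(\Theta_0-\xx)$ with respect to the filtration by polynomial degree is \emph{always} isomorphic to $\CC[V]/(\Theta_0)$, hence to $L_c(\mathrm{triv})$, regardless of whether $A$ is reduced: the leading forms of $\theta_i-x_i$ are exactly the $\theta_i$, which form a regular sequence, and the resulting surjection $\CC[V]/(\Theta_0)\twoheadrightarrow \mathrm{gr}\,A$ is an isomorphism by the dimension count $\dim A=(kh+1)^n=\dim\CC[V]/(\Theta_0)$ that you already established. So passing to the associated graded loses precisely the information you want, and irreducibility of $L_c(\mathrm{triv})$ imposes no constraint on the nilpotents of $A$. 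There is likewise no established map sending ``module structure at $c$'' to ``a particular minor of $J(p)-I_n$''; as written this is a hope, not an argument. The Schur's Lemma input in Etingof's argument is used in a different way (roughly, to control the joint spectrum of the commuting Dunkl operators at a nonzero fixed point and force simplicity there), and that substitute for your step would need to be supplied before the proof closes.
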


Etingof's argument uses the fact that the image of the map corresponding to $\Theta_0$ is annihilated by 
all Dunkl operators.  The Cartan-theoretic definition of the Dunkl operators can be used, together
with a Schur's Lemma argument, to prove the reducedness of $V^{\Theta_0}(k)$.  Unfortunately, the characterization of
$\Theta_0$ in terms of Dunkl operators has not yet proved sufficient to understand $V^{\Theta_0}(k)$ explicitly enough so that
its $W \times \ZZ_{kh}$-structure can be connected with noncrossing parking functions.

\begin{remark}
Let us motivate the use of $V^{\Theta}(k)$ as a model
for parking functions.

Assume that $W$ is {\em crystallographic}.  The action of $W$ on $V$ stabilizes
the {\em root lattice} $Q = \ZZ[\Phi] \subset V$.  Consider the dilation
$(kh+1)Q$ of the lattice $Q$.  The group $W$ acts on the {\em finite torus} 
$Q/(kh+1)Q \cong (\ZZ_{kh+1})^n$.  The use of finite tori to uniformly
 model parking functions in crystallographic type goes back to the origins of parking functions in algebraic combinatorics
 \cite{Haiman}.
 
 Outside of crystallographic type, there is no root lattice $Q$ and this  construction breaks down.  When
 $V^{\Theta}(k)$ is reduced, we can identify 
 the {\em finite set}
 $V^{\Theta}(k) \subset V$ as a finite torus-like object  
 outside of crystallographic type.  The construction of $V^{\Theta}(k)$ even applies to
 well-generated complex reflection groups, although we don't pursue this here.
 
 Even inside crystallographic type, the locus $V^{\Theta}(k)$ has a significant advantage over the finite
 torus $Q/(kh+1)Q$: it carries a natural action of not just $W$, but the product group $W \times \ZZ_{kh}$.  
This additional cyclic group action is closely related to the action of rotation on 
 noncrossing partitions.
\end{remark}

In order to think about continuously varying families of h.s.o.p's,
it will be useful to think of h.s.o.p.'s in terms of polynomial maps $V \longrightarrow V$.  
Fix a basis
$x_1, \dots, x_n$ of the dual space $V^*$.  For any positive integer $d$, the  affine space
$\Hom_{\CC}(V^*, \CC[V]_d)$ parametrizes the collection of all degree $d$ homogeneous polynomial
maps $\Theta: V \longrightarrow V$, where we have $x_i(\Theta(v)) := \Theta(x_i)(v)$ for all
$v \in V$ and $1 \leq i \leq n$.
In other words, we have that $(\theta_1, \dots, \theta_n) := (\Theta(x_1), \dots, \Theta(x_n))$
are the coordinate functions of $\Theta$ with respect to $x_1, \dots, x_n$.

The group $W$ acts on both $V^*$ and $\CC[V]_d$.  Under the above setup,
the  equivariant
affine space $\Hom_{\CC[W]}(V^*, \CC[V]_d)$ parametrizes the collection of all degree $d$
homogeneous polynomial maps $\Theta: V \longrightarrow V$ which are 
$W$-equivariant: $\Theta(w.v) = w.\Theta(v)$ for all $w \in W$ and $v \in V$.  An h.s.o.p.
of degree $d$ carrying $V^*$ is nothing more than an element
$\Theta \in \Hom_{\CC[W]}(V^*, \CC[V]_d)$ whose associated function
$\Theta: V \longrightarrow V$ satisfies $\Theta^{-1}(0) = \{0\}$.

\begin{example}
Let us consider the case of rank $1$.  We have the identifications $W = \{ \pm 1 \}$, $V = \CC$, and
$\CC[V] = \CC[x]$.  Up to a choice of scalar, the unique h.s.o.p. of degree $kh+1 = 2k+1$ is 
$\theta_1 = x^{2k+1}$ and the $W$-equivariant map
map $V^* \rightarrow \CC[V]_{2k+1}$ is given by
$x \mapsto x^{kh+1}$.  The ideal $(\Theta) = (x^{2k+1}) \subset \CC[x]$ corresponding to a 
fat point at the origin in $\CC$ of multiplicity $2k+1$.  The parking locus
$V^{\Theta}(k)$ corresponds to the deformed ideal $(\Theta - \xx) = (x^{2k+1} - x)$, so we may 
identify $V^{\Theta}(k)$ with the `blown apart' locus of $2k+1$ points 
$V^{\Theta}(k) = \{1, \zeta, \zeta^2, \dots, \zeta^{2k-1}, 0\}$,
where $\zeta = e^{\frac{\pi i}{k}}$.
This process shown below in the case $k = 3$.  The generator $g \in \ZZ_{kh} = \ZZ_{2k}$ scales by $\zeta$
and $W$ acts by $\pm 1$.

\begin{center}
\includegraphics[scale = 0.3]{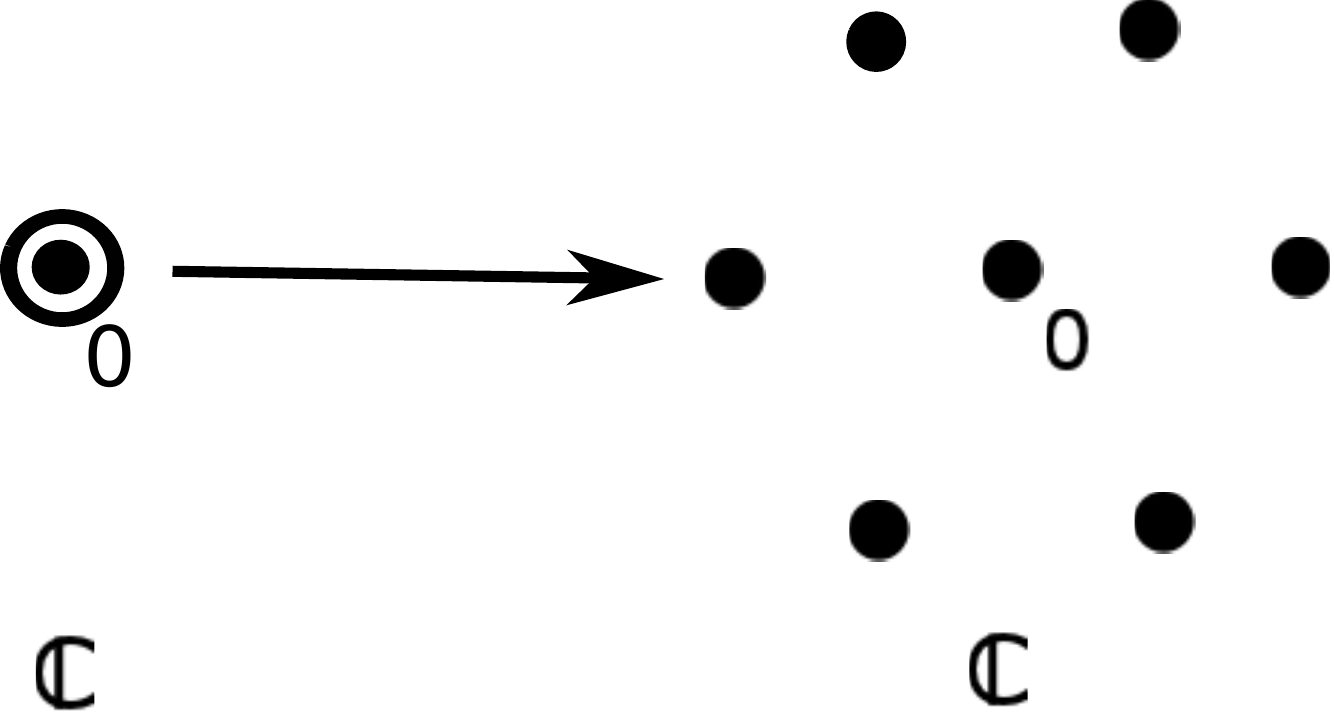}
\end{center}

In types BCDI, h.s.o.p.'s of degree $kh+1$ (or more generally of any odd degree) carrying 
$V^*$ can be obtained by taking powers $x_1^{kh+1}, \dots, x_n^{kn+1}$
of the coordinate functions on the standard models of the reflection
representations.  The case of the symmetric group is much harder, essentially because the standard
action of $\symm_n$ on $\CC^n$ fails to be irreducible.  An inductively constructed h.s.o.p.
which depends on the prime factorization of $n$ is due to Kraft and can be found in \cite{Haiman}.
Chmutova and Etingof \cite{CE} have an explicit h.s.o.p. involving formal power series.
We will not use any explicit h.s.o.p.'s in this paper.
\end{example}

\subsection{$k$-$W$-noncrossing partitions}
In \cite{ARR} a certain $W \times \ZZ_h$-set 
$\Park^{NC}_W$ called the set of {\it $W$-noncrossing parking functions} was constructed.
In \cite{Rhoades} this definition was extended to give a Fuss analog in the form of a  $W \times \ZZ_{kh}$-set
$\Park^{NC}_W(k)$ of {\it $k$-$W$-noncrossing parking functions} for any positive integer $k$ 
(so that we have the specialization $\Park^{NC}_W = \Park^{NC}_W(1)$).
We recall the definition of $\Park^{NC}_W(k)$  and give its combinatorial model in type A.

For any $w \in W$,  denote by $V^w$ the corresponding fixed space
$V^w := \{ v \in V \,:\, w.v = v \}$.  Given $X \subseteq V$,  denote by
$W_X := \{w \in W \,:\, \text{$w.x = x$ for all $x \in X$} \}$ the subgroup of $W$ which fixes $X$
pointwise.  The subgroups $W_X$ are known as {\it parabolic subgroups} of $W$.

Recall that $\Cox(W)$ is the Coxeter arrangement in $V$ attached
to $W$.  Let $\LLL$ denote the intersection lattice of this arrangement.  
Subspaces $X \in \LLL$ are called {\em flats}.

Given $w \in W$, the {\it reflection length} $\ell_T(w)$ is the minimum number $l$ such that
we can write $w = t_1 t_2 \cdots t_l$ with $t_1, t_2, \dots, t_l \in T$.  
%(This is {\it not} the standard
%Coxeter length function on $W$.)  
{\it Absolute order} is the partial order 
$\leq_T$ on $W$ defined by $u \leq_T w$ if and only if we have
$\ell_T(w) = \ell_T(u) + \ell_T(u^{-1}w)$. 
%It can be shown that if $u \leq_T w$, the corresponding fixed spaces are related
%by $V^u \supseteq V^w$.

 The absolute order $\leq_T$ on $W$ has a unique minimal element
given by the identity $e \in W$, but usually has many maximal elements.  The Coxeter 
elements of $W$ are all maximal. 
We let $[e, c]_T := \{w \in W \,:\, e \leq_T w \leq_T c\}$ denote the  absolute order interval
between $e$ and $c$.  
The group elements in the interval $NC(W) := [e, c]_T$ are called {\it noncrossing}.
%Since any two Coxeter elements in $W$ are conjugate, the set 
%$NC(W)$ depends on the choice of $c$ only up to conjugacy.

For any group element $w \in W$, the fixed space $V^w$ is in $\LLL$.  
The map $W \rightarrow \LLL$ 
given by $w \mapsto V^w$ restricts to an injection
$NC(W) \hookrightarrow \LLL$.  Flats in the image of this injection are called {\it noncrossing}.

Following Armstrong \cite{Arm}, we define the  {\it $k$-$W$-noncrossing partitions}
$NC^k(W)$ to be the set of all $k$-element multichains 
$(w_1 \leq_T \cdots \leq_T w_k)$ in the poset $NC(W)$ of $W$-noncrossing partitions.
These multichains were also considered by Chapoton \cite{Chapoton}.
Applying the fixed space map, we arrive at the notion of a {\it noncrossing $k$-flat},
which is a descending multichain
$(X_1 \supseteq \cdots \supseteq X_k)$ of noncrossing flats in $\LLL$.

The set $NC^k(W)$ can be interpreted in terms of factorizations of the distinguished Coxeter element $c$.
A sequence $(w_0, w_1, \dots, w_k) \in W^{k+1}$ is called an {\it $\ell_T$-additive factorization of $c$ of length $k+1$}
if $w_0 w_1 \cdots w_k = c$ and $\ell_T(w_0) + \ell_T(w_1) + \cdots + \ell_T(w_k) = \ell_T(c) = n$.
We let $NC_k(W)$ denote the set of $\ell_T$-additive factorizations of $c$ of length $k+1$.
The following `difference and sum'
maps $\partial$ and $\int$ are mutually inverse bijections between
$NC^k(W)$ and $NC_k(W)$.
\begin{align*}
&\partial: NC^k(W) \longrightarrow NC_k(W) \\
&\partial: (w_1, w_2, \dots, w_k) \mapsto (w_1, w_1^{-1} w_2, \cdots, w_{k-1}^{-1} w_k, w_k^{-1} c) \\
&\text{\begin{footnotesize} $\int$ \end{footnotesize}}: NC_k(W) \longrightarrow NC^k(W) \\
&\text{\begin{footnotesize} $\int$ \end{footnotesize}}: (w_0, w_1, \dots, w_k) \mapsto (w_0,w_0 w_1, \cdots, w_0 w_1, w_{k-1})  
\end{align*}

The cyclic group $\ZZ_{kh} = \langle g \rangle$ acts on $NC_k(W)$ 
by $g.(w_0, w_1, \dots, w_k) := (v, c w_k c^{-1}, w_1, w_2, \dots, w_{k-1})$, where
$v = (c w_k c^{-1})w_0 (c w_k c^{-1})^{-1}$ (see \cite{Arm}).
By transferring structure through the bijection $\int$, 
we get an action of $\ZZ_{kh}$ on the set $NC^k(W)$ of $k$-$W$-noncrossing partitions.
By taking fixed spaces, we also 
get an action $(X_1 \supseteq \cdots \supseteq X_k) \mapsto g.(X_1 \supseteq \cdots \supseteq X_k)$ 
on the set of noncrossing $k$-flats.
This action is called {\it generalized rotation}.

\begin{example}
We will only consider 
$W$-noncrossing partitions and $W$-noncrossing parking functions
in any specificity
when $W = \symm_n$ is the symmetric group.  Let us review the relevant combinatorics of noncrossing
partitions in type A.

The Coxeter arrangement for $W = \symm_n$ is the braid arrangement
$\{x_i - x_j = 0 \,:\, 1 \leq i < j \leq n\}$.  We may identify flats $X \in \LLL$ with set partitions $\pi$ of $[n]$
by letting $i \sim j$ if and only if the coordinate equality $x_i = x_j$ holds on $X$.
When  $c = (1, 2, \dots, n)$, a flat $X$ is noncrossing
if and only if the corresponding set partition $\pi$ of $[n]$ is noncrossing in the sense that 
the blocks of $\pi$ do not cross when drawn on a disc with boundary labelled clockwise 
by $1, 2, \dots, n$. 

Noncrossing $k$-flats $(X_1 \supseteq \cdots \supseteq X_k)$ may be identified
with noncrossing set partitions $\pi$ of $[kn]$ which are {\em $k$-divisible} in the sense
that every block of $\pi$ has size divisible by $k$.  Under this identification, generalized rotation
is the usual rotation action on noncrossing set partitions.
\end{example}

\subsection{Noncrossing parking functions}
Our combinatorial model of parking functions is given by the following set of equivalence classes, which appeared
in \cite{ARR} when $k = 1$ and in \cite{Rhoades} for general $k$.

\begin{defn}
A {\em $k$-$W$-noncrossing parking function} is an equivalence class in
\begin{equation}
\{(w, X_1 \supseteq \cdots \supseteq X_k) \,:\, \text{$X_1 \supseteq \cdots \supseteq X_k$ a noncrossing $k$-flat}\}/\sim,
\end{equation} 
where $(w, X_1 \supseteq \cdots \supseteq X_k) \sim (w', X_1' \supseteq \cdots \supseteq X_k')$ if 
$X_i = X_i'$ for all $i$ and we have the coset equality
$w W_{X_1} = w' W_{X_1}$.  

The set of $k$-$W$-noncrossing parking functions is denoted $\Park^{NC}_W(k)$.
\end{defn}

We use square brackets to denote equivalence classes, so that 
$[w, X_1 \supseteq \cdots \supseteq X_k]$ is the $k$-$W$-noncrossing parking function containing 
$(w, X_1 \supseteq \cdots \supseteq X_k)$.
By \cite[Proposition 3.2]{Rhoades}, the rule
\begin{equation}
(v, g).[w, X_1 \supseteq \cdots \supseteq X_k] := [vwu_kc^{-1}, g.(X_1 \supseteq \cdots \supseteq X_k)]
\end{equation}
induces a well defined action of the group $W \times \ZZ_{kh}$ on $\Park^{NC}_W(k)$,
where $u_k \in W$ is the unique noncrossing group element such that 
$V^{u_k} = X_k$.

\begin{example}
When $W = \symm_n$, we can visualize noncrossing parking functions using noncrossing partitions.  
There is a bijection $\nabla$ (see \cite{Rhoades} for its definition) between
$\Park^{NC}_{\symm_n}(k)$ and pairs $(\pi, f)$ where 
\begin{itemize}
\item  $\pi$ is a $k$-divisible noncrossing partition of $[kn]$,
\item  $f: B \mapsto f(B)$ is a labeling of the blocks of $\pi$ with subsets of $[n]$,
\item  if $B$ is a block of $\pi$, we have that $|f(B)| = \frac{|B|}{k}$, and
\item  we have $[n] = \biguplus_{B \in \pi} f(B)$.
\end{itemize}
When $n = k = 3$, three elements of $\Park^{NC}_{\symm_3}(3)$ are shown below.
The element on the left corresponds to the pair $(\pi, f)$ where
$\pi = \{ \{1, 8, 9\}, \{2, 3, 4, 5, 6, 7\} \}, f(\{1,8,9\}) = \{2\},$ and $f(\{2,3,4,5,6,7\}) = \{1, 3\}$. 

\begin{center}
\includegraphics[scale = 0.4]{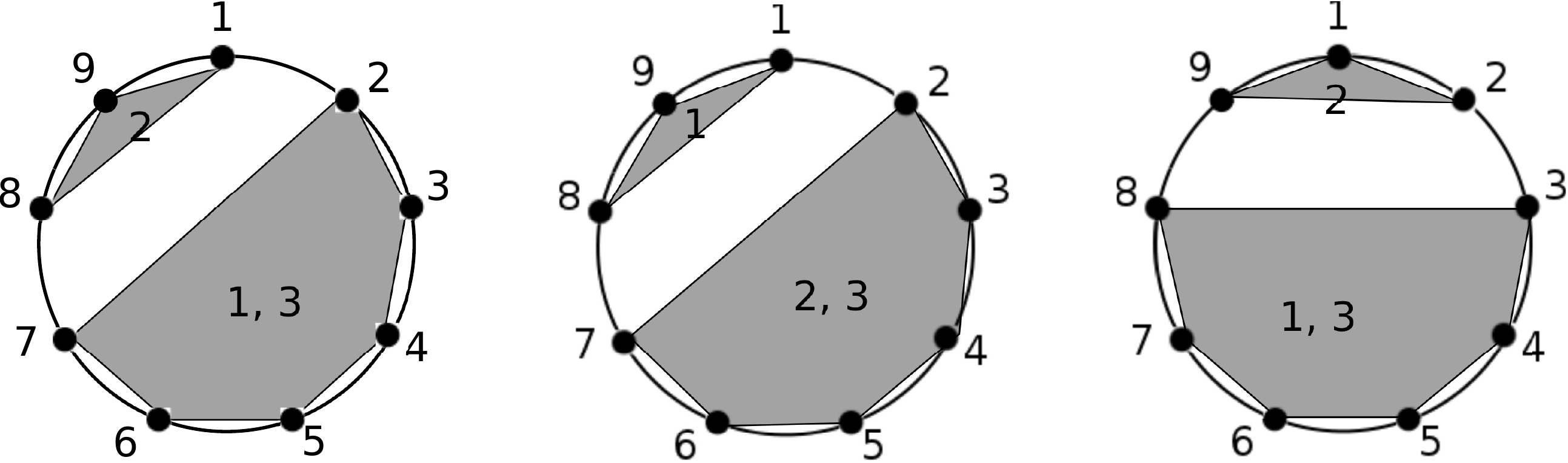}
\end{center}

The bijection given in \cite{Rhoades} makes the $W \times \ZZ_{kh} = \symm_n \times \ZZ_{kn}$ action easy to visualize.
The symmetric group $\symm_n$ acts by permuting labels, leaving the noncrossing partition $\pi$ fixed.
The center parking function above is the image
of the left parking function under $(1, 2) \in \symm_3$.
The distinguished generator $g \in \ZZ_{kn}$ acts by clockwise rotation. 
The right parking function above is the image of the left parking function under $g \in \ZZ_9$. 

We remark that the study of labeled noncrossing partitions $(\pi, f)$ goes back to a 1980 paper of 
Edelman \cite[Section 5]{Edelman}.
In the case $k = 1$, our pairs $(\pi, f)$ are what Edelman calls `non-crossing 2-partitions'.  
Edelman defines a partial order $T^2_n$ on n.c. 2-partitions and uses Lagrange Inversion
to prove that multichains in this partial order are counted
 by the formula $(kn+1)^{n-1}$ \cite[Theorem 5.3]{Edelman}.
 The bijection $\nabla$ in \cite{Rhoades} (which is a parking function enrichment of Armstrong's map 
 $\nabla$ given in \cite{Arm}) translates $k$-element multichains in $T^2_n$ to ordered pairs
 $(\pi, f)$ for general $k$ as above.  
\end{example}

\subsection{The Main Conjecture}
\label{Subsection:Main Conjecture}

The Weak form of the Main Conjecture gives a character formula for our combinatorial model of parking functions.

\begin{wconjecture}
Let $\chi: W \times \ZZ_{kh} \rightarrow \CC$ be the permutation character of the $W \times \ZZ_{kh}$-set $\Park^{NC}_W(k)$.
For any $w \in W$ and $d \geq 0$ we have that
\begin{equation}
\chi(w, g^d) = (kh+1)^{\mult_w(\zeta^d)},
\end{equation}
where $\zeta = e^{\frac{2 \pi i}{kh}}$ is a primitive $(kh)^{th}$ root-of-unity and 
$\mult_w(\zeta^d)$ is the multiplicity of $\zeta^d$ as an eigenvalue in the action of $w$ on $V$.
\end{wconjecture}

The Weak Conjecture uniformly implies a number of facts in $W$-Catalan theory 
which are at present only understood in a case-by-case fashion.  In particular, for any $W$ for which the Weak Conjecture holds,
we can {\it uniformly} prove the following facts.

\begin{enumerate}
\item (Fuss-Catalan Count)  The number $|NC^k(W)|$ of $k$-$W$-noncrossing partitions is the {\em $W$-Fuss-Catalan number}
$\Cat^k(W) := \prod_{i = 1}^n \frac{kh+d_i}{d_i}$, where $d_1, \dots, d_n$ are the invariant degrees of $W$.
\item  (Fuss-Catalan CSP)  The triple $(NC^k(W), \ZZ_{kh}, \Cat_q^k(W))$ exhibits the {\em cyclic sieving phenomenon}
(see \cite{RSWCSP}), where
the cyclic group $\ZZ_{kh}$ acts on the set $NC^k(W)$ by generalized rotation and 
$\Cat^k_q(W) := \prod_{i = 1}^n \frac{1 - q^{kh+d_i}}{1-q^{d_i}}$ is the {\em $q$-$W$-Fuss-Catalan number}.
This means that the number of elements in $NC^k(W)$ fixed by $g^d$ equals 
the polynomial evaluation $[\Cat_q^k(W)]_{q = \zeta^d}$.
\item  (Kreweras Coincidence)  
Assume $W$ has crystallographic type.
For any flat $X \in \LLL$, the number of noncrossing flats in the orbit $W.X$ of $X$ under the action of $W$
equals the number of nonnesting flats in this orbit.  
\footnote{Following Postnikov (see \cite[Remark 2]{Reading}, 
a flat $X \in \LLL$ is {\em nonnesting} if it is a hyperplane intersection
corresponding to an antichain in the positive root poset $\Phi^+$.}
\end{enumerate}

Fact 1 above is a specialization of Fact 2 at $q = 1$.  When $k = 1$, Bessis and Reiner 
\cite{Reiner, BessisReiner}
proved Facts 1 and 2 by combinatorial models 
in the infinite families ABCDI and computer checks in the exceptional types EFH.  Fact 3 was used by
Bessis and Reiner to prove  Fact 2 \cite{BessisReiner}.
For general $k \geq 1$, the Fuss-Catalan Count of multichains in the absolute order interval
$[e, c]_T$ was performed by Chapoton \cite{Chapoton}.  The cyclic sieving result for general $k$
is due to Krattenthaler-M\"uller \cite{KM1, KM2} and Kim \cite{Kim}.  At present,
Facts 1-3 are only understood in a case-by-case fashion.  The following result motivates the Weak Conjecture.

\begin{proposition}
\label{weak-implies-facts}
The Weak Conjecture uniformly implies Facts 1-3 whenever it is true.
\end{proposition}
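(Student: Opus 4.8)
The plan is to deduce all three facts by pushing the Weak Conjecture's character identity through the forgetful map $\pi\colon\Park^{NC}_W(k)\rightarrow NC^k(W)$ that sends a noncrossing parking function to its underlying noncrossing $k$-flat. From the definition of $\Park^{NC}_W(k)$ and its $W\times\ZZ_{kh}$-action, $\pi$ is $W$-invariant, intertwines the $\ZZ_{kh}$-action with generalized rotation, and has the $W$-orbits as its fibres (the fibre over $(X_1\supseteq\cdots\supseteq X_k)$ being $W$-isomorphic to $W/W_{X_1}$). Hence $\pi$ induces a $\ZZ_{kh}$-equivariant bijection $\Park^{NC}_W(k)/W\xrightarrow{\sim}NC^k(W)$, so $\CC[NC^k(W)]\cong_{\CC[\ZZ_{kh}]}\CC[\Park^{NC}_W(k)]^{W}$ via orbit sums.

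\textbf{Facts 1 and 2.} First I would upgrade the Weak Conjecture's character equality to a module isomorphism: by \eqref{parking-character} the quantity $(kh+1)^{\mult_w(\zeta^d)}$ is the character of $\CC[V]/(\Theta)\cong_{\CC[W\times\ZZ_{kh}]}\CC[V]/(\Theta-\xx)=\Park^{alg}_W(k)$, and finite-dimensional complex modules with equal characters are isomorphic, so the Weak Conjecture gives $\CC[\Park^{NC}_W(k)]\cong_{\CC[W\times\ZZ_{kh}]}\CC[V]/(\Theta)$. Taking $W$-invariants and using the identification above, I obtain $\CC[NC^k(W)]\cong_{\CC[\ZZ_{kh}]}\bigl(\CC[V]/(\Theta)\bigr)^{W}$, and it remains to compute the graded $\ZZ_{kh}$-character of the right-hand side. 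Since $\theta_1,\dots,\theta_n$ is a homogeneous regular sequence in $\CC[V]$ whose span is a copy of $V^{*}$, the ring $\CC[V]$ is free over $\CC[\theta_1,\dots,\theta_n]$ with $W$-equivariant graded fibre $\CC[V]/(\Theta)$, so the graded $W$-trace of $\CC[V]/(\Theta)$ equals $\det(1-q^{kh+1}w)/\det(1-qw)$ (operators on $V$); averaging over $W$ and applying Solomon's exterior-algebra identity $\frac{1}{|W|}\sum_{w\in W}\frac{\det(1-tw)}{\det(1-qw)}=\prod_{i=1}^{n}\frac{1-tq^{d_i-1}}{1-q^{d_i}}$ at $t=q^{kh+1}$ shows that the Hilbert series of $\bigl(\CC[V]/(\Theta)\bigr)^{W}$ is $\Cat^{k}_{q}(W)$. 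Since $g$ scales the degree-$e$ piece by $\zeta^{e}$, the $\ZZ_{kh}$-character at $g^{d}$ is this Hilbert series at $q=\zeta^{d}$, so $\#\mathrm{Fix}_{NC^{k}(W)}(g^{d})=[\Cat^{k}_{q}(W)]_{q=\zeta^{d}}$: this is the Fuss-Catalan cyclic sieving (Fact 2), and its $d=0$ case reads $|NC^{k}(W)|=\Cat^{k}(W)$ (Fact 1). Equivalently one could keep the character and evaluate $\frac{1}{|W|}\sum_{w}(kh+1)^{\mult_w(\zeta^{d})}$ via the root-of-unity refinement of the Shephard-Todd-Solomon identity; the computation above packages this cleanly.

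\textbf{Fact 3.} Here $W$ is crystallographic and $k=1$. The same structural picture shows $\Park^{NC}_W\cong\bigsqcup_{X\text{ noncrossing}}W/W_X$ as a $W$-set, so the Weak Conjecture yields $\bigoplus_{X\text{ noncrossing}}\CC[W/W_X]\cong_{\CC[W]}\CC[V]/(\Theta)$ for any degree-$(h+1)$ h.s.o.p.\ $\Theta$ carrying $V^{*}$. On the nonnesting side I would invoke the (crystallographic, type-uniform) description of the finite torus: $Q/(h+1)Q$ has $W$-character $w\mapsto(h+1)^{\dim V^{w}}$ by a coprimality computation in the root lattice, so $\CC[Q/(h+1)Q]\cong_{\CC[W]}\CC[V]/(\Theta)$, while its $W$-orbit decomposition is $\CC[Q/(h+1)Q]\cong_{\CC[W]}\bigoplus_{X\text{ nonnesting}}\CC[W/W_X]$ (Sommers; Athanasiadis). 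Combining, $\bigoplus_{X\text{ noncrossing}}\CC[W/W_X]\cong_{\CC[W]}\bigoplus_{X\text{ nonnesting}}\CC[W/W_X]$. Since a flat equals the fixed space of its own parabolic subgroup, the $W$-orbit of $X$ is determined by the conjugacy class $[W_X]$, so grouping both sides by orbit rewrites them as $\bigoplus_{[P]}a_{[P]}\CC[W/P]$ and $\bigoplus_{[P]}b_{[P]}\CC[W/P]$, where $a_{[P]}$ (resp.\ $b_{[P]}$) is the number of noncrossing (resp.\ nonnesting) flats in the orbit of type $[P]$. Finally, the permutation modules $\CC[W/P]$ for distinct parabolic conjugacy classes are linearly independent: ordering the classes by the dimension of the associated flat and evaluating the permutation character of $\CC[W/P]$ at a Coxeter element of each parabolic makes the resulting matrix triangular with nonzero diagonal. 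Hence $a_{[P]}=b_{[P]}$ for every $[P]$, which is the Kreweras coincidence.

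The Hilbert-series computation and the linear-independence lemma are routine (the latter via the triangularity just indicated), and the arguments for Facts 1 and 2 are unconditionally type-uniform, resting only on the Weak Conjecture together with the Cohen-Macaulay freeness of $\CC[V]$ over a h.s.o.p.\ and Solomon's exterior-algebra theorem. The point I would most want to nail down is the nonnesting input to Fact 3 --- that $\CC[Q/(h+1)Q]$ decomposes over nonnesting flats in a case-free manner; were that available only type by type, the word ``uniformly'' would lose its force for Fact 3.
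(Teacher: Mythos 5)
Your argument is correct and matches the paper's approach: Facts 1 and 2 are delegated there to \cite{ARR, Rhoades}, whose proofs amount to essentially your Molien/Hilbert-series computation via Solomon's identity and the Cohen--Macaulay freeness of $\CC[V]$ over the h.s.o.p., and for Fact 3 the paper runs precisely your argument --- linear independence of the parabolic permutation characters $\psi_X$, together with Haiman's computation of the $W$-character of $Q/(h+1)Q$ and the Cellini--Papi/Shi identification of $W$-orbits on $Q/(h+1)Q$ with nonnesting flats. Your write-up is if anything more careful than the paper's terse one (the paper loosely speaks of an ``inner product'' where it means coefficients in a $\psi_X$-expansion, and calls the independence of the $\psi_X$ ``well known'' where you supply the standard triangularity argument evaluated at parabolic Coxeter elements), and your closing concern about the uniformity of the nonnesting input is the right thing to worry about, though it is indeed available type-uniformly from the cited sources.
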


\begin{proof}
For Facts 1 and 2, this is described in \cite{ARR} for $k = 1$ and \cite{Rhoades} for $k \geq 1$.  
For Fact 3, for any flat $X$ one considers the permutation action of $W$ on the parabolic cosets $W/W_X$.
It is well known that the characters $\{ \psi_X: W \rightarrow \CC\}$ of these actions are linearly
independent as $X$ varies over a collection of $W$-orbit representatives in the intersection lattice $\LLL$.
Ignoring the $\ZZ_h$-action on $\Park^{NC}_W(1)$, we get that the inner product 
$\langle \chi \downarrow_W, \psi_X \rangle_W$ of the $W$-character 
$\chi \downarrow_X$ of $\Park^{NC}_W(1)$ with
$\psi_X$  is the number
of $W$-noncrossing flats in the orbit of $X$.  On the other hand, the finite torus
$Q/(h+1)Q$ is uniformly known to have $W$-character as in the Weak Conjecture \cite{Haiman}.
The $W$-orbits in $Q/(h+1)Q$ biject with $W$-nonnesting flats \cite{CelliniPapi, Shi}, 
and an orbit corresponding to a 
nonnesting flat $X$ contributes $\psi_X$ to the corresponding character.
\end{proof}

The author thanks Vic Reiner for pointing out the proof of Proposition~\ref{weak-implies-facts} 
shown above.  Further uniform ramifications
of the Weak Conjecture  concerning Kirkman and Narayana numbers can be found in \cite{ARR}.
In light of the above discussion, 
a uniform proof of the Weak Conjecture would be highly desirable.  
One approach for doing so would be to give a uniform proof of the following 
Intermediate Conjecture, which relates noncrossing parking functions to h.s.o.p.'s.

\begin{iconjecture}
There exists a h.s.o.p. $\Theta \in \Hom_{\CC[W]}(V^*, \CC[V]_{kh+1})$ such that the parking locus 
$V^{\Theta}(k)$ is reduced and there is a $W \times \ZZ_{kh}$-equivariant bijection of sets
\begin{equation*}
V^{\Theta}(k) \cong_{W \times \ZZ_{kh}} \Park^{NC}_W(k).
\end{equation*}
\end{iconjecture}

Given a h.s.o.p. $\Theta$ satisfying the conditions of the Intermediate Conjecture, we can 
use the isomorphism $V^{\Theta}(k) \cong_{\CC[W \times \ZZ_{kh}]} \Park^{alg}_W(k)$ to deduce  
 the Weak Conjecture uniformly.  We remark that  the conclusion of the Intermediate Conjecture is {\it a priori} stronger
than that of the Weak Conjecture.  While the  Weak Conjecture would guarantee a {\em linear}
isomorphism of $\CC[W \times \ZZ_{kh}]$-modules
$V^{\Theta}(k) \cong_{\CC[W \times \ZZ_{kh}]} \Park^{NC}_W(k)$,
it does not guarantee the stronger property of 
a $W \times \ZZ_{kh}$-set bijection
$V^{\Theta}(k) \cong_{W \times \ZZ_{kh}} \Park^{NC}_W(k)$.
\footnote{If we ignore the cyclic group action, the linear independence of the characters $\psi_X$ 
in the proof of Proposition~\ref{weak-implies-facts} shows that the linear isomorphism
$V^{\Theta}(k) \cong_{\CC[W]} \Park^{NC}_W(k)$ uniformly implies the combinatorial isomorphism
$V^{\Theta}(k) \cong_{W} \Park^{NC}_W(k)$.  The author is unaware of a similar linear independence
result for $W \times \ZZ_{kh}$-characters.  At any rate, an explicit 
$W \times \ZZ_{kh}$-equivariant {\em bijection}
between $V^{\Theta}(k)$ and $\Park^{NC}_W(k)$ would be desirable for combinatorial understanding.}

In this paper we will prove the Intermediate Conjecture in type A.
The main obstruction to proving the Intermediate Conjecture
in type A has been the relative complexity of the h.s.o.p.'s  making the locus $V^{\Theta}(k)$ difficult to analyze.
In our proof, we will not calculate this locus explicitly, but instead compare stabilizers of points within 
$V^{\Theta}(k)$ and $\Park^{NC}_W(k)$ and use  sieve techniques to deduce the relevant bijection.

The Strong Conjecture asserts that the conclusion of the Intermediate Conjecture holds for any h.s.o.p.
$\Theta \in \Hom_{\CC[W]}(V^*, \CC[V]_{kh+1})$.

\begin{sconjecture}
For any element $\Theta \in \Hom_{\CC[W]}(V^*, \CC[V]_{kh+1})$  such that $\Theta$ is an h.s.o.p.,
the parking locus $V^{\Theta}(k)$ is reduced and
there is a $W \times \ZZ_{kh}$-equivariant bijection of sets
\begin{equation*}
V^{\Theta}(k) \cong_{W \times \ZZ_{kh}} \Park^{NC}_W(k).
\end{equation*}
\end{sconjecture}

Our generic analog
of the Strong Conjecture is as follows.

\begin{gsconjecture}
Let $\RRR \subset \Hom_{\CC[W]}(V^*, \CC[V]_{kh+1})$ denote the set of 
polynomial maps $\Theta \in \Hom_{\CC[W]}(V^*, \CC[V]_{kh+1})$ such
that $\Theta$ is a h.s.o.p. and $V^{\Theta}(k)$ is reduced.
\begin{itemize}
\item  For any $\Theta \in \RRR$ we have a $W \times \ZZ_{kh}$-equivariant bijection of sets
\begin{equation*}
V^{\Theta}(k) \cong_{W \times \ZZ_{kh}} \Park^{NC}_W(k).
\end{equation*}
\item  There is a nonempty Zariski open set $\UUU \subset \Hom_{\CC[W]}(V^*, \CC[V]_{kh+1})$ satisfying $\UUU \subseteq \RRR$.
\end{itemize}
\end{gsconjecture}

Aside from the assertion about $\UUU$, the Strong Conjecture clearly implies the Generic Strong Conjecture.  
Moreover, the Genertic Strong Conjecture implies the Intermediate Conjecture.  
We will show that the Generic Strong and Intermediate Conjectures are, in fact, equivalent.  This will prove the Generic Strong
Conjecture in all infinite families ABCDI.  We will also show uniformly that there {\em always} exists 
a nonempty Zariski open $\UUU$ with $\UUU \subseteq \RRR$.

\begin{remark}
The reader may question the usefulness of the Strong Conjecture.  
Given Etingof's Theorem~\ref{etingof-theorem} and the fact that proving 
$V^{\Theta}(k) \cong_{W \times \ZZ_{kh}} \Park^{NC}_W(k)$ for just one h.s.o.p. $\Theta$ would yield the 
desired uniform proofs of Facts 1-3, why not take $\Theta$ to be the h.s.o.p. 
coming from Cherednik algebras?  And, given the difficulty of constructing relevant h.s.o.p.'s uniformly,
do we know that we really have  more freedom in our choice of $\Theta$? 

To compute the locus $V^{\Theta}(k)$ , we  need to solve a system of polynomial equations arising from the 
h.s.o.p. $\Theta$.  When $\Theta$ is only understood as an element of the common kernel of Dunkl operators,
solving such a system seems difficult.  The Strong Conjecture represents the hope that parking functions can be understood
without recourse to Cherednik algebras.

The Zariski openness of $\UUU$ in the Generalized Strong Conjecture can be interpreted as saying that the dimension of the 
``parameter space" of relevant $\Theta$ for the Strong Conjecture is measured by the dimension of the vector space
$\Hom_{\CC[W]}(V^*, \CC[V]_{kh+1})$.  The following result shows that these dimensions can be large, meaning that
there are significantly more choices of $\Theta$ than the one coming from
Cherednik algebras.
%The truth of the Generalized Strong Conjecture in types ABCDI shows that these more general choices tend to ``do the trick".
\end{remark}

In order the state the dimension of $\Hom_{\CC[W]}(V^*, \CC[V]_{kh+1})$, let us introduce some notation.
If $\mu = (\mu_1 \geq  \cdots \geq \mu_n)$ is a weakly decreasing sequence of nonnegative integers, let
$\delta(\mu)$ be the number of distinct entries in $\mu$, less one.  
The following result shows that the dimension of $\UUU$ in the Generalized Strong Conjecture can be much greater than one.

\begin{proposition}
\label{dimension}
The dimension of the $\CC$-vector space $\Hom_{\CC[W]}(V^*, \CC[V]_{kh+1})$ in types ABCDI is as follows.
In the formulas below, the $\mu_i$ and $\nu_i$ are nonnegative integers.
\begin{itemize}
\item  When $W = \symm_n$, we have $h = n$ and
\begin{equation}
\dim(\Hom_{\CC[W]}(V^*, \CC[V]_{kh+1})) = \sum_{\substack{\mu = (\mu_1 \geq \cdots \geq \mu_n) \\ \sum_i \mu_i = kn+1}} \delta(\mu) 
- \sum_{\substack{\nu = (\nu_1 \geq \cdots \geq \nu_n) \\ \sum_i \nu_i = kn }} \delta(\nu).
\end{equation}
\item When $W = W(B_n) = W(C_n)$, we have $h = 2n$ and
\begin{equation}
\dim(\Hom_{\CC[W]}(V^*, \CC[V]_{kh+1})) = \# \left\{ (\mu_1 \geq \cdots \geq \mu_n) \,:\, 
\text{$\sum \mu_i = 2nk + 1$, exactly one $\mu_i$  odd} \right\}.
\end{equation}
\item  When $W = W(D_n)$, we have $h = 2n-2$ and
\begin{equation}
\dim(\Hom_{\CC[W]}(V^*, \CC[V]_{kh+1})) = \# \left\{ (\mu_1 \geq \cdots \geq \mu_n) \,:\, 
\text{$\sum \mu_i = (2n-2)k + 1$, exactly one $\mu_i$  odd} \right\}.
\end{equation}
\item  When $W = W(I_2(m))$, we have $h = m$ and
\begin{equation}
\dim(\Hom_{\CC[W]}(V^*, \CC[V]_{kh+1})) = k+1.
\end{equation}
\end{itemize}
\end{proposition}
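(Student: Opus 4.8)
The plan is to set aside parking functions entirely and recognize the left-hand side as a graded multiplicity of the reflection representation. Since $W$ is real, $V$ is self-dual and all irreducible characters of $W$ are real-valued, so
\[
\dim_{\CC}\Hom_{\CC[W]}(V^*,\CC[V]_{kh+1}) \;=\; \langle \chi_V,\,\chi_{\CC[V]_{kh+1}}\rangle_W \;=\; \mult\bigl(V,\,\CC[V]_{kh+1}\bigr),
\]
the multiplicity of the reflection representation in the degree-$(kh+1)$ piece of the coordinate ring. I would compute this in two complementary ways. Uniformly: by Chevalley's theorem $\CC[V]\cong_{\CC[W]}\CC[V]^W\otimes_{\CC}\CC[V]_W$ as graded $W$-modules, with $\CC[V]^W$ a polynomial ring on generators of degrees $d_1,\dots,d_n$ carrying the trivial action, while the graded multiplicity of the reflection representation in the coinvariant algebra $\CC[V]_W$ is $\sum_i q^{m_i}$, $m_i=d_i-1$ the exponents. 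Hence
\[
\sum_{d\geq 0}\mult\bigl(V,\CC[V]_d\bigr)\,q^d \;=\; \frac{q^{m_1}+\cdots+q^{m_n}}{(1-q^{d_1})\cdots(1-q^{d_n})},
\]
and the quantity we want is the coefficient of $q^{kh+1}$. Cancelling the common factor coming from $1-q^{d_i}=(1-q)(1+q+\cdots+q^{d_i-1})$ turns this into an elementary count of nonnegative integer solutions of a single linear equation: for $W=I_2(m)$, where there are only the two degrees $2$ and $m$, this directly produces the stated value, and for $W(D_n)$ it does so once one separates the degree-$n$ invariant from the invariants of even degree.

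In types A and $B/C$ I would instead decompose $\CC[V]_d$ one monomial $W$-orbit at a time, which yields the stated closed forms more transparently. In type $A_{n-1}$, write $\CC[V]=\CC[x_1,\dots,x_n]/(p)$ with $p=x_1+\cdots+x_n$; then $\CC[x_1,\dots,x_n]\cong_{\CC[W]}\CC[p]\otimes\CC[V]$, so $\mult(S^{(n-1,1)},\CC[V]_d)=\mult(S^{(n-1,1)},\CC[x_1,\dots,x_n]_d)-\mult(S^{(n-1,1)},\CC[x_1,\dots,x_n]_{d-1})$, where $S^{(n-1,1)}\cong V$ is the standard irreducible. The $\symm_n$-span of the orbit of a monomial $x^\mu$, for $\mu$ a partition of $d$ into at most $n$ parts, is the Young permutation module $M^{\lambda(\mu)}$, where $\lambda(\mu)$ records the multiplicities of the values — zero included — occurring among $\mu_1,\dots,\mu_n$; by Young's rule $\mult(S^{(n-1,1)},M^{\lambda(\mu)})=K_{(n-1,1),\lambda(\mu)}$, and one checks directly that $K_{(n-1,1),\lambda}$ equals (number of nonzero parts of $\lambda$)$\,-1$, which is precisely $\delta(\mu)$ when $\mu$ is written as a length-$n$ sequence. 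Summing over $\mu$ with $|\mu|=kn+1$ and subtracting the $|\mu|=kn$ sum gives the first formula. In types $B_n=C_n$ the reflection representation is $\CC^n$ with the signed-permutation action, and now the $W$-span of the orbit of $x^\mu$ is the induced module $\Ind_{P_\mu}^W(\chi_\mu)$, where $P_\mu\leq W$ is the projective stabilizer of $x^\mu$ (the level-set-preserving permutations together with all sign changes) and $\chi_\mu(\sigma,\epsilon)=(-1)^{\#\{i\,:\,\epsilon_i=-1,\ \mu_i\ \text{odd}\}}$ is the linear character by which $P_\mu$ scales $x^\mu$. By Frobenius reciprocity $\mult(V,\Ind_{P_\mu}^W\chi_\mu)=\dim(V\otimes\chi_\mu)^{P_\mu}$, and a coordinate-by-coordinate inspection of the sign action shows this dimension is $1$ exactly when $\mu$ has a single odd part — and since $kh+1$ is odd the number of odd parts is odd, so one is the minimum — and $0$ otherwise. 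For $W(D_n)$ I would run the same argument inside $W(D_n)\subset W(B_n)$, keeping track of when the hyperoctahedral orbit of $x^\mu$ stays a single $W(D_n)$-orbit (it splits precisely when $\mu$ has no even part) and redoing the coordinate analysis with $\chi_\mu$ restricted to even sign changes.

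The main obstacle will be the type-$D_n$ bookkeeping. The quantity $\dim(V\otimes\chi_\mu)^{P_\mu}$ is nonuniform precisely in the borderline cases where $\mu$ has $0$ or $1$ odd coordinates, or $0$ or $1$ even coordinates; because $\sum\mu_i=(2n-2)k+1$ is odd the number of odd parts is always odd, so treating these borderline cases cleanly forces the analysis to bifurcate according to the parity of $n$, and there is a parallel parity-of-$m$ bifurcation in the $I_2(m)$ count, coming down to whether $2$ is invertible modulo $m$. Once those case distinctions are organized each clause collapses to a one-line generating-function or Kostka-number identity; type A is the most painless, needing only Young's rule together with the isomorphism $\CC[x_1,\dots,x_n]\cong\CC[p]\otimes\CC[V]$.
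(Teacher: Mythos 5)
Your type-$A$ and type-$B/C$ arguments are essentially the paper's own. In type $A$ the paper also decomposes $\mathrm{Sym}^d(\CC^n)$ into $\symm_n$-orbits of monomials and applies Young's Rule; isolating $V$ via the subtraction $\mathrm{Sym}^d(V)=\mathrm{Sym}^d(\CC^n)\ominus\mathrm{Sym}^{d-1}(\CC^n)$ (coming from $\CC^n=V\oplus{\bf 1}$) is exactly your observation $\CC[x_1,\dots,x_n]\cong\CC[p]\otimes\CC[V]$. In type $B/C$ your Frobenius calculation $\mult(V,M^\mu)=\dim(V\otimes\chi_\mu)^{P_\mu}$ is a clean repackaging of the paper's restriction argument to $W(B_{n-1})$ and the diagonal $\pm1$ subgroup, and gives the same answer, namely $1$ iff $\mu$ has a unique odd entry.

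The interesting part is that the parity bifurcation you flag in types $D_n$ and $I_2(m)$ is not merely unattended bookkeeping — pushing the sign-character casework all the way through shows the Proposition as stated (and the paper's own proof of it) is \emph{wrong} when $n$, resp.\ $m$, is even. For $D_n$, a character of $(\pm1)^n$ vanishes on the index-two subgroup $\{\epsilon:\prod\epsilon_i=1\}$ iff it is trivial \emph{or} equals $\prod_i\epsilon_i$; applying this to $\epsilon\mapsto\epsilon_i\prod_j\epsilon_j^{\mu_j}$ shows $\mult(V,M^\mu)=1$ precisely when $\mu$ has a unique odd entry \emph{or} a unique even entry (zeros counted), and these are disjoint for $n\geq3$. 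For $n$ odd the second alternative never occurs in odd total degree and the paper's formula holds, but for $n$ even it contributes an extra summand, so the correct dimension is
\[
\#\{\mu : \text{exactly one $\mu_i$ odd}\}\;+\;\#\{\mu:\text{exactly one $\mu_i$ even}\}.
\]
Concretely for $D_4$, $k=1$: degrees $2,4,4,6$, and the coefficient of $q^7$ in $(q+2q^3+q^5)/\bigl((1-q^2)(1-q^4)^2(1-q^6)\bigr)$ is $11$, not the $7$ the stated formula yields; the extra copies live in $M^\mu$ for $\mu=(5,1,1,0),(4,1,1,1),(3,3,1,0),(3,2,1,1)$, each with exactly one even entry. (The paper's claim that ``more than one odd part makes a $W(D_{n-1})$-invariant impossible'' fails here: $x_1^5x_2x_3+x_2^5x_1x_3+x_3^5x_1x_2$ is $W(D_3)$-invariant.) Similarly for $I_2(m)$: the copy of $V$ in $\spn\{x^ay^b,x^by^a\}\subset\CC[x,y]_{km+1}$ requires $2a\equiv 0$ or $2\pmod m$, giving $k+1$ choices of $a$ when $m$ is odd but $2k+1$ when $m$ is even (and $2k+1$ agrees with the $B_2$ formula evaluated at $I_2(4)$, while $k+1$ does not). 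So your method, carried out carefully, reveals (and corrects) a bug in the paper rather than confirming the stated formula.

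One small simplification to your $D_n$ outline: there is no need to track whether the hyperoctahedral orbit of $x^\mu$ splits. The $B_n$- and $D_n$-orbits of the \emph{line} $\CC\cdot x^\mu$ always coincide — the projective stabilizer in $B_n$ always contains a single sign change (on any coordinate fixed by the level-set permutations), so it is never contained in $D_n$ and always surjects onto $B_n/D_n$ — hence $M^\mu$ is the same span of monomials either way. The only effect of restricting to $D_n$ is that the projective stabilizer and the character $\chi_\mu$ are both cut in half, which is exactly where the second branch of the parity casework enters.
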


\begin{proof}
While these results are somewhat standard, we perform the relevant calculations for the convenience of the reader.

First consider the case $W = \symm_n$ for $n > 1$.  The defining representation $\CC^n$ decomposes as 
$\CC^n = V \oplus {\bf 1}_{\symm_n}$.  For any degree $d$, we have the following identifications of $\symm_n$-modules: 
$\mathrm{Sym}^d(\CC^n) = \mathrm{Sym}^d( V \oplus {\bf 1}_{\symm_n} ) = 
\bigoplus_{i = 0}^d \mathrm{Sym}^i(V) \otimes \mathrm{Sym}^{d-i}( {\bf 1}_{\symm_n}) = \bigoplus_{i = 0}^d \mathrm{Sym}^i(V)$. 
Since this is true for any $d$, we conclude that 
$\mathrm{Sym}^d(V) \oplus \mathrm{Sym}^{d-1}(\CC^n) = \mathrm{Sym}^d(\CC^n)$.  This allows us to compute the desired dimension
by finding the multiplicity of $V$ in $\mathrm{Sym}^d(\CC^n)$ and $\mathrm{Sym}^{d-1}(\CC^n)$ and then subtracting.

We can identify $\mathrm{Sym}^d(\CC^n)$ with the action of $\symm_n$ on degree $d$ monomials in the variables $x_1, \dots, x_n$.
A system of orbit representatives is given by $\{ x_1^{\mu_1} \cdots x_n^{\mu_n}\}$, where $\mu_1 \geq \cdots \geq \mu_n$ and
$\sum \mu_i = d$.  We may identify $V$ with the $\symm_n$-irreducible $S^{\lambda}$ corresponding 
to the partition $\lambda = (n-1, 1) \vdash n$.  Young's Rule tells us that for $\mu = (\mu_1 \geq \cdots \geq \mu_n)$, the multiplicity
of $V$ in the $\symm_n$-module generated by $x_1^{\mu_1} \cdots x_n^{\mu_n}$ is $\delta(\mu)$.  This completes the case
$W = \symm_n$.

Suppose that $W$ has type $B_n/C_n$ (and $n \geq 2$) or $D_n$ (and $n \geq 4$).  We may identify $V$ with the defining 
action of $W$ on $\CC^n$.  As before, we identify 
$\mathrm{Sym}^d(\CC^n)$ with the (signed) permutation action of $W$ on the collection of degree $d$ monomials in 
the variables $x_1, \dots, x_n$.  Given $\mu = (\mu_1 \geq \cdots \geq \mu_n)$ with 
$\sum \mu_i = d$, we get a corresponding submodule $M^{\mu}$ generated by $x_1^{\mu_1} \cdots x_n^{\mu_n}$.

We claim that the multiplicity of $V$ in $M^{\mu}$ equals either one or zero, according to whether $\mu$ 
contains a unique odd entry or not.
To see why this is the case, consider the standard embeddings
$W(B_{n-1}) \subset W(B_n)$ and $W(D_{n-1}) \subset W(D_n)$.  Any copy of $V$ inside $M^{\mu}$ must be an
$n$-dimensional submodule containing an $(n-1)$-dimensional subspace $V' \subset V$ on which the subgroups
$W(B_{n-1})/W(D_{n-1})$ act trivially.  If $\mu$ contains more than one odd part, it is impossible to find such a $V'$.
If $\mu$ contains no odd parts, the diagonal subgroup $W \cap \mathrm{diag}(\pm 1, \dots, \pm 1)$ acts trivially on
$M^{\mu}$, so that $M^{\mu}$ contains no copy of $V$.  In $\mu$ contains precisely one odd part $\mu_i$, the unique
copy of $V$ inside $M^{\mu}$ is generated by
$x_i^{\mu_i} \sum_{w \in \symm_{[n] - \{i\}}} x_1^{\mu_{w(1)}} \cdots \widehat{x_i^{\mu_{w(i)}}} \cdots x_n^{\mu_{w(n)}} \in M^{\mu}$.

Finally, suppose that $W = I_2(m)$.  We consider the generating set of the dihedral group $W$ 
given by the two matrices $\begin{pmatrix} 0 & 1 \\ 1 & 0 \end{pmatrix}, \begin{pmatrix} \zeta & 0 \\ 0 & \zeta^{-1} \end{pmatrix}$.
The $k+1$ copies of the reflection representation $V$ sitting inside 
the space $\CC[V]_{km+1} = \CC[x, y]_{km+1}$ are spanned by the sets
\begin{equation*}
\{x^{km+1}, y^{km+1}\}, \{x^{(k-1)m+1}y^m, x^m y^{(k-1)m+1}\}, \cdots, \{ x y^{km}, x^{km} y\}.
\end{equation*}
\end{proof}

\section{Parking stabilizers}
\label{Parking stabilizers}

In this section and the next, we will prove the Intermediate Conjecture in type A.
The results and proofs  in this section are uniform; we specialize to type A in the next section.

In order to prove the Intermediate Conjecture, we need to prove an isomorphism of 
$W \times \ZZ_{kh}$-sets.  For any finite group $G$, to prove that a given pair of finite-dimensional $\CC[G]$-modules 
are isomorphic, it is enough to show that their characters coincide.
On the other hand, for general finite groups $G$ there can be two finite $G$-sets 
$\mathcal{S}$ and $\mathcal{T}$
with the same (permutation) character such that there is no
$G$-equivariant bijection $\mathcal{S} \cong_G \mathcal{T}$.
To prove our $W \times \ZZ_{kh}$-set isomorphisms, we will rely on the following basic 
sieve-type result.

\begin{lemma}
\label{g-set-lemma}
Let $G$ be a finite group and let $\mathcal{S}$ and $\mathcal{T}$ be finite $G$-sets.  Suppose that for every subgroup $H \leq G$  
which arises as the stabilizer of an element of $\mathcal{S}$ or $\mathcal{T}$,
the corresponding fixed point sets have the same cardinality:
\begin{equation*}
|\mathcal{S}^H| = |\mathcal{T}^H|.
\end{equation*}
Then there is a $G$-equivariant bijection $\mathcal{S} \cong_G \mathcal{T}$.
\end{lemma}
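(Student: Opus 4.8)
The plan is to decompose both $G$-sets into orbits and reduce the statement to the invertibility of (a piece of) the table of marks. Every transitive $G$-set is isomorphic to a coset space $G/H$ with $H$ the stabilizer of a point, and $G/H \cong_G G/K$ exactly when $H$ and $K$ are conjugate in $G$; so I would first write
\[
\mathcal{S} \cong_G \bigsqcup_{[H]} a_{[H]}\cdot(G/H), \qquad \mathcal{T} \cong_G \bigsqcup_{[H]} b_{[H]}\cdot(G/H),
\]
the disjoint unions indexed by conjugacy classes $[H]$ of subgroups of $G$, with $a_{[H]}, b_{[H]}\in\ZZ_{\geq 0}$ almost all zero. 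The lemma is equivalent to $a_{[H]}=b_{[H]}$ for all $[H]$. Crucially, $a_{[H]}\neq 0$ forces $H$ to occur as a stabilizer of a point of $\mathcal{S}$, and $b_{[H]}\neq 0$ forces $H$ to occur as a stabilizer of a point of $\mathcal{T}$; let $\mathcal{H}$ be the finite set of conjugacy classes arising as a stabilizer in $\mathcal{S}$ or in $\mathcal{T}$, so the possibly-nonzero $a_{[H]},b_{[H]}$ are indexed by $\mathcal{H}$, and by hypothesis $|\mathcal{S}^H|=|\mathcal{T}^H|$ holds for every $[H]\in\mathcal{H}$.

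Next I would invoke the elementary count $|(G/K)^H|=\#\{\,gK : g^{-1}Hg\subseteq K\,\}$. This is nonzero only when some conjugate of $H$ lies in $K$; in particular it vanishes when $|H|>|K|$, and when $|H|=|K|$ it is nonzero only if $[H]=[K]$, in which case it equals $|N_G(H)/H|\geq 1$. Taking $H$-fixed points in the two orbit decompositions gives, for each $[H]\in\mathcal{H}$, the linear relations $|\mathcal{S}^H|=\sum_{[K]\in\mathcal{H}}a_{[K]}\,|(G/K)^H|$ and $|\mathcal{T}^H|=\sum_{[K]\in\mathcal{H}}b_{[K]}\,|(G/K)^H|$. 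Ordering $\mathcal{H}$ by non-increasing subgroup order, the square matrix $\big(|(G/K)^H|\big)_{[H],[K]\in\mathcal{H}}$ is lower triangular with nonzero diagonal, hence invertible over $\mathbb{Q}$; since the two systems have identical left-hand sides by hypothesis, I conclude $a_{[K]}=b_{[K]}$ for all $[K]\in\mathcal{H}$, and therefore for all conjugacy classes, which yields the desired $G$-equivariant bijection.

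Equivalently — and this is perhaps the cleanest way to write it up — one can argue by induction on $|\mathcal{S}|$: the base case $\mathcal{S}=\varnothing$ forces $\mathcal{T}=\varnothing$ (a stabilizer of a point of $\mathcal{T}$ would violate $|\mathcal{S}^H|=|\mathcal{T}^H|$), and in the inductive step one picks a subgroup $H$ of maximal order among all stabilizers of points of $\mathcal{S}$ and $\mathcal{T}$, observes that only orbits of type $G/H$ contribute $H$-fixed points, deduces $a_{[H]}=b_{[H]}$ from $|\mathcal{S}^H|=|\mathcal{T}^H|$, removes $a_{[H]}$ copies of $G/H$ from each side, and checks that the hypothesis persists for the smaller $G$-sets. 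Either way the content is the same, and the only point that needs care — though it is routine — is the bookkeeping that the fixed-point hypothesis is imposed at \emph{precisely} the subgroups that can index a nonzero multiplicity, so that the restricted table of marks one must invert is exactly the one whose rows the hypothesis controls.
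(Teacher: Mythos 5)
Your proof is correct and rests on the same underlying idea as the paper's: the restricted table of marks is triangular with nonzero diagonal, so the fixed-point data at the occurring stabilizers uniquely determines the orbit-type multiplicities. The paper phrases this as a Möbius-inversion-style argument on the inclusion poset of individual stabilizer subgroups (equating counts of elements with each exact stabilizer) followed by an orbit-stripping induction to build the bijection; you phrase it via orbit multiplicities indexed by conjugacy classes and direct matrix invertibility, with the same inductive alternative — the substance is identical.
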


\begin{proof}
Consider the poset $P$ of subgroups of $G$ which arise as stabilizers of elements of $\mathcal{S}$ or $\mathcal{T}$, ordered by inclusion.  
For any subgroup $H \in P$, the hypothesis $|\mathcal{S}^H| = |\mathcal{T}^H|$ may be rewritten as
\begin{equation}
\label{first-set-lemma-equation}
\sum_{H \leq_P K} |\{s \in \mathcal{S} \,:\, \Stab_G(s) = K\}| = \sum_{H \leq_P K} |\{t \in \mathcal{T} \,:\, \Stab_G(t) = K\}|.
\end{equation}
Since $P$ is a finite poset and 
Equation~\ref{first-set-lemma-equation}
holds for {\em all} $H \in P$, we have
\begin{equation}
\label{second-set-lemma-equation}
 |\{s \in \mathcal{S} \,:\, \Stab_G(s) = H\}| = |\{t \in \mathcal{T} \,:\, \Stab_G(t) = H\}|
\end{equation}
for all subgroups $H \in P$.

Taking $H = \{e\}$, we get that $|\mathcal{S}| = |\mathcal{T}|$.  
We argue by induction on this common cardinality.
Choose $s_0 \in \mathcal{S}$ arbitrarily.
By Equation~\ref{second-set-lemma-equation}, there exists $t_0 \in \mathcal{T}$ such that $\Stab_G(s_0) = \Stab_G(t_0)$.
Extend the assignment $s_0 \mapsto t_0$ in the unique  way to get a $G$-equivariant bijection
between the orbits $G.s_0 \xrightarrow{\sim} G.t_0$.  
On the other hand,  since 
\begin{equation}
\Stab_G(g.s_0) = g \Stab_G(s_0) g^{-1} = g \Stab_G(t_0) g^{-1} = \Stab_G(g.t_0)
\end{equation}
for all $g \in G$,
the hypothesis of the lemma continues to hold when one replaces $\mathcal{S}$ with 
$\mathcal{S} - G.s_0$ and $\mathcal{T}$ with $\mathcal{T} - G.t_0$.  The lemma follows from induction.
\end{proof}

The $G$-sets we will apply Lemma~\ref{g-set-lemma} to will be the $W \times \ZZ_{kh}$-sets
$\Park^{NC}_W(k)$ and $V^{\Theta}(k)$.
In order to apply Lemma~\ref{g-set-lemma} effectively, we will need to 
characterize the subgroups $H \leq W \times \ZZ_{kh}$ which arise as stabilizers in the actions on
$\Park^{NC}_W(k)$ or $V^{\Theta}(k)$ 
compute 
the fixed set sizes
$|\Park^{NC}_W(k)^H|$ and $|V^{\Theta}(k)^H|$.  The 
fundamental enumerative result which allows us to count
$V^{\Theta}(k)^H$ is as follows.

\begin{lemma}
\label{theta-count-lemma}
Let $\Theta: V \longrightarrow V$ 
be a h.s.o.p. of degree $kh+1$ carrying $V^*$ such that the parking locus $V^{\Theta}(k)$ is reduced.  Let 
$X \subseteq V$ be any subspace which is stabilized by $\Theta$.  The intersection
$X \cap V^{\Theta}(k)$ has precisely $(kh+1)^{\dim(X)}$ points.
\end{lemma}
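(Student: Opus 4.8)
The plan is to restrict $\Theta$ to $X$, recognize the scheme-theoretic intersection $X \cap V^{\Theta}(k)$ as the fixed-point scheme of this restriction, and then run the same "Hilbert series plus reducedness" argument that gives $|V^{\Theta}(k)| = (kh+1)^n$ in the special case $X = V$. First I would set up coordinates adapted to $X$. Writing $r = \dim(X)$, choose a basis $x_1, \dots, x_n$ of $V^*$ so that $x_{r+1}, \dots, x_n$ span the space of linear forms vanishing on $X$; then $I(X) = (x_{r+1}, \dots, x_n)$ and $\CC[X] = \CC[x_1, \dots, x_r]$. Since $V^{\Theta}(k)$ is, as a scheme, independent of which basis of $V^*$ is used to present the ideal $(\Theta - \xx)$ — changing basis replaces the generators $\theta_i - x_i$ by invertible linear combinations of themselves — we may compute $V^{\Theta}(k)$ in this basis even though $X$ need not be $W$-stable. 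Because $\Theta$ stabilizes $X$, for $j > r$ the polynomial $\theta_j = x_j \circ \Theta$ vanishes on $X$, i.e.\ $\theta_j \in I(X)$; hence reducing $(\Theta - \xx) + I(X)$ modulo $I(X)$ deletes the generators $\theta_j - x_j$ for $j > r$ and sends $\theta_i - x_i$ to $\bar{\theta}_i - x_i$ for $i \le r$, where $\bar{\theta}_i := \theta_i|_X \in \CC[X]$. So $X \cap V^{\Theta}(k)$ has coordinate ring $\CC[X]/(\bar{\theta}_1 - x_1, \dots, \bar{\theta}_r - x_r)$; that is, it is exactly the fixed-point scheme of $\Theta|_X : X \to X$.

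Next I would observe that $\bar{\theta}_1, \dots, \bar{\theta}_r$ is an h.s.o.p.\ on $X$: each $\bar{\theta}_i$ is homogeneous of degree $kh+1$, and their common zero locus equals $X \cap \Theta^{-1}(0) = \{0\}$ since $\Theta$ is an h.s.o.p.\ on $V$. Consequently $\CC[X]/(\bar{\theta}_1, \dots, \bar{\theta}_r)$ is a complete intersection with Hilbert series $(1 + t + \cdots + t^{kh})^r$, of total dimension $(kh+1)^r$, and the inhomogeneous deformation $\CC[X]/(\bar{\theta}_1 - x_1, \dots, \bar{\theta}_r - x_r)$ has the same $\CC$-dimension $(kh+1)^r$ — this is the argument of \cite[Proposition 2.11]{ARR}, equivalently a B\'ezout count ($r$ forms of degree $kh+1$ in $r$ variables whose leading forms share no common projective zero have no solutions at infinity, so the affine intersection scheme has length $(kh+1)^r$). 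Finally, $X \cap V^{\Theta}(k)$ is a closed subscheme of $V^{\Theta}(k)$, which is reduced by hypothesis, so $X \cap V^{\Theta}(k)$ is reduced; a reduced zero-dimensional scheme of length $(kh+1)^r$ is precisely a set of $(kh+1)^{\dim(X)}$ distinct points.

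The one place demanding care — and the only genuine obstacle — is the bookkeeping in the first step: because $X$ is merely $\Theta$-stable rather than $W$-stable, one must justify computing $V^{\Theta}(k)$ in a non-$W$-adapted basis of $V^*$ and then check cleanly that the scheme-theoretic intersection with $X$ is cut out by $(\bar{\theta}_1 - x_1, \dots, \bar{\theta}_r - x_r)$ and not by some strictly larger ideal. Once that identification is secured, the dimension count and the transfer of reducedness from $V^{\Theta}(k)$ to the intersection are routine.
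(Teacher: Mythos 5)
Your proposal is correct and follows essentially the same route as the paper's proof: restrict $\Theta$ to $X$ to get a homogeneous degree-$(kh+1)$ self-map of $X$ with $\Theta|_X^{-1}(0)=\{0\}$, invoke B\'ezout to count $(kh+1)^{\dim X}$ fixed points with multiplicity, and use reducedness of $V^\Theta(k)$ to conclude all multiplicities are $1$. You simply spell out two points the paper leaves implicit — that the defining ideal of $V^\Theta(k)$ is basis-independent (so one may work in an $X$-adapted, non-$W$-adapted basis), and that $X\cap V^\Theta(k)$ is literally the fixed-point scheme of $\Theta|_X$ — which is a tightening of the exposition rather than a different argument.
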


\begin{proof}
In fact, this is true even if $\Theta$ does not commute with the action of $W$.  By assumption, we can restrict $\Theta$ to get a polynomial
map $\Theta|_X: X \longrightarrow X$ of homogeneous degree $kh+1$.  By B\'ezout's Theorem and the fact
that $\Theta|_X^{-1}(0) = \{0\}$, we get that 
$\Theta|_X$ has $(kh+1)^{\dim(X)}$ fixed points counting multiplicity.  The multiplicities of all of these fixed points must
equal $1$ because the locus $V^{\Theta}(k)$ is reduced.
\end{proof}

In order to apply
Lemma~\ref{theta-count-lemma}, we will need to find some interesting subspaces $X \subseteq V$ which are stabilized by $\Theta$.
The subspaces we will consider will be intersections of subspaces given in the following lemma.  For $w \in W$
and $\xi \in \CC$, let $E(w, \xi) := \{v \in V \,:\, w.v = \xi v\}$ be the corresponding eigenspace in the action of $w$ on $V$.
In particular, we have $E(w, 1) = V^w$.

\begin{lemma}
\label{eigenspace-lemma}
Let $\xi \in \CC$ be a complex number satisfying $\xi^{kh} = 1$ and let $\Theta$ be any h.s.o.p. of degree $kh+1$ carrying $V^*$.
For any $w \in W$, the eigenspace
$E(w, \xi)$ is stabilized by $\Theta$.
\end{lemma}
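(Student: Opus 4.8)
The plan is to combine two structural features of $\Theta$ that have already been set up: the map $\Theta \colon V \to V$ is $W$-equivariant, and it is homogeneous of degree $kh+1$ as a polynomial map. The hypothesis $\xi^{kh} = 1$ is then exactly what is needed to convert the scaling exponent $kh+1$ back into the single factor $\xi$, so that the $\xi$-eigenspace (and not some other eigenspace) is preserved.

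Concretely, I would fix $w \in W$ and an arbitrary vector $v \in E(w, \xi)$, so that $w.v = \xi v$. Homogeneity of degree $kh+1$ gives $\Theta(\xi v) = \xi^{kh+1}\Theta(v)$, and $W$-equivariance gives $\Theta(w.v) = w.\Theta(v)$. Chaining these together,
\begin{equation*}
w.\Theta(v) = \Theta(w.v) = \Theta(\xi v) = \xi^{kh+1}\Theta(v) = \xi^{kh}\cdot \xi \cdot \Theta(v) = \xi\,\Theta(v),
\end{equation*}
where the final equality uses $\xi^{kh} = 1$. Thus $\Theta(v) \in E(w, \xi)$, and since $v \in E(w,\xi)$ was arbitrary, $\Theta$ stabilizes $E(w,\xi)$, as claimed.

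Since the argument is a one-line computation, there is no genuine obstacle; the only points worth flagging are that both the multiplicative behavior ($W$-equivariance) and the scaling behavior (homogeneity) of $\Theta$ are used simultaneously, and that having degree $\equiv 1 \pmod{kh}$ is precisely what singles out $\xi$ rather than another eigenvalue. Note also that this lemma uses nothing about $\Theta$ being an h.s.o.p.\ beyond its being a $W$-equivariant homogeneous degree-$(kh+1)$ map; the h.s.o.p.\ hypothesis will only be invoked later, when Lemma~\ref{theta-count-lemma} is applied to intersections of eigenspaces of the form $E(w,\xi)$.
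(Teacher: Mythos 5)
Your proof is correct and is essentially identical to the paper's: both fix $v \in E(w,\xi)$ and run the same chain $w.\Theta(v) = \Theta(w.v) = \Theta(\xi v) = \xi^{kh+1}\Theta(v) = \xi\Theta(v)$, using $W$-equivariance, homogeneity of degree $kh+1$, and $\xi^{kh}=1$ in turn. Your closing remark that the h.s.o.p.\ hypothesis is not actually used here is a fair observation and does not conflict with the paper's argument.
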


\begin{proof}
Let $v \in E(w, \xi)$.  We compute 
\begin{equation}
w.\Theta(v) = \Theta(w.v)
= \Theta(\xi v) 
= \xi^{kh+1} \Theta(v) 
= \xi \Theta(v),
\end{equation}
where the first equality uses the fact that $\Theta$ commutes with the action of $W$, the second uses the fact that $v \in E(w, \xi)$,
the third uses the fact that $\Theta$ is homogeneous of degree $kh+1$, and the fourth uses the fact that $\xi^{kh} = 1$.  We conclude that
$\Theta(v) \in E(w, \xi)$.
\end{proof}

When $\xi = 1$, Lemma~\ref{theta-count-lemma} implies that the flats $X$ of the
 intersection lattice $\LLL$ are stable under the action of $\Theta$.
Alex Miller studied the poset of $\xi$-eigenspaces $E(w, \xi)$ of elements $w$ of a complex reflection group $W$ for a fixed 
 $\xi \neq 1$, ordered by reverse inclusion \cite{Miller}.
In this paper we will consider ``mixed" subspaces which are intersections of the form $X \cap E(w, \xi)$, where
$X \in \LLL$ and $\xi^{kh} = 1$.  The study
 of {\it arbitrary} intersections of eigenspaces (corresponding to possibly different roots of unity $\xi$)
could yield interesting combinatorics.

In order to apply Lemma~\ref{g-set-lemma}, we will need to determine which subgroups 
$H \leq W \times \ZZ_{kh}$ arise as stabilizers of elements
of $\Park^{NC}_W(k)$ or $V^{\Theta}(k)$.  
In particular, this should be the same collection of subgroups.
In the case of $V^{\Theta}(k)$, an answer is as follows.

\begin{lemma}
\label{locus-stabilizer-lemma}
Let $\Theta$ be a h.s.o.p. of degree $kh+1$ carrying $V^*$ and let $p \in V^{\Theta}(k)$ be a point in the parking locus.  
Let $X(p) \in \LLL$
be the minimal flat containing $p$ under inclusion and let $d \geq 1$ be minimal  such that there exists $w \in W$ with
$(w, g^d).p = p$.  Then $d | kh$ and the stabilizer $\Stab_{W \times \ZZ_{kh}}(p)$ is generated by 
$W_{X(p)}$ and $(w, g^d)$:
\begin{equation}
\Stab_{W \times \ZZ_{kh}}(p) = \left\langle W_{X(p)} \times \{e\},  (w, g^d) \right\rangle.
\end{equation}
\end{lemma}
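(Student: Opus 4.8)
The plan is to analyze the two commuting pieces of the stabilizer separately: the ``vertical'' part coming from $W$ alone, and the ``diagonal'' part mixing $W$ with the cyclic group. First I would observe that the action of $W \times \ZZ_{kh}$ on $V^{\Theta}(k)$ is the restriction of an action on all of $V$, where $(v, g^d)$ sends $p$ to $\zeta^{-d} (v.p)$ (or $\zeta^d$, depending on conventions); so computing the stabilizer of $p$ in $W \times \ZZ_{kh}$ is the same as computing it for the ambient linear action and then noting the answer lies inside the parking locus. For the vertical part, the elements $(v, e)$ fixing $p$ are exactly those $v \in W$ with $v.p = p$, i.e. $v \in W_{\{p\}}$; and since parabolic subgroups are determined by the flat they generate, $W_{\{p\}} = W_{X(p)}$ where $X(p) \in \LLL$ is the intersection of all reflecting hyperplanes through $p$ — equivalently the smallest flat containing $p$. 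This identifies $\Stab_{W \times \ZZ_{kh}}(p) \cap (W \times \{e\}) = W_{X(p)} \times \{e\}$.

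Next I would handle the projection of the stabilizer onto $\ZZ_{kh}$. Let $d \geq 1$ be minimal such that $(w, g^d).p = p$ for some $w \in W$; the set of powers $g^m$ appearing as second coordinates of stabilizer elements forms a subgroup of $\ZZ_{kh}$, hence is precisely $\langle g^d \rangle$ with $d \mid kh$. Now for any $(w', g^{d'}) \in \Stab(p)$ we have $d \mid d'$, say $d' = md$; then $(w', g^{d'})(w, g^d)^{-m}$ is a stabilizer element whose second coordinate is trivial, hence lies in $W_{X(p)} \times \{e\}$ by the previous paragraph. This shows $\Stab_{W \times \ZZ_{kh}}(p) = \langle W_{X(p)} \times \{e\},\ (w, g^d) \rangle$, as claimed. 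Note that different choices of $w$ with $(w,g^d).p = p$ differ by an element of $W_{X(p)}$, so the subgroup generated does not depend on the choice.

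The only genuinely substantive point — and the step I expect to be the main obstacle — is verifying that $X(p)$, the smallest flat containing $p$, really is the flat attached to the pointwise stabilizer $W_{\{p\}}$, i.e. that $W_{\{p\}}$ is a parabolic subgroup with fixed flat exactly $X(p)$. This is a standard consequence of Steinberg's fixed-point theorem (the fixed subgroup of any subset of $V$ under a reflection group is generated by the reflections whose hyperplanes contain that subset), but one must take care that it applies to a single point $p \in V$ and that $V^{W_{\{p\}}} = X(p) \in \LLL$. Everything else is elementary group theory: subgroups of cyclic groups, and the observation that the kernel of the projection $\Stab(p) \to \ZZ_{kh}$ is the vertical part. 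I would also remark that the hypothesis that $V^{\Theta}(k)$ is reduced is not needed here — the statement is purely about the linear $W \times \ZZ_{kh}$-action on $V$ restricted to any point, and reducedness only enters later when we count fixed points via Lemma~\ref{theta-count-lemma}.
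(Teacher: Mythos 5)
Your proof is correct and follows essentially the same route as the paper: identify the ``vertical'' part of the stabilizer with $W_{X(p)}$, observe that the projection to $\ZZ_{kh}$ forces $d \mid kh$, and then reduce any stabilizer element to the vertical part by multiplying by $(w,g^d)^{-m}$. The paper phrases the key containment $W_{\{p\}} \subseteq W_{X(p)}$ slightly more elementarily, using only that $V^{w''} \in \LLL$ (so $X(p)\cap V^{w''}$ is a flat containing $p$, and minimality of $X(p)$ gives $X(p)\subseteq V^{w''}$) rather than citing Steinberg's theorem by name, but the underlying fact is the same; and the paper, like you, remarks that reducedness of $V^\Theta(k)$ is irrelevant here.
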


\begin{proof}
In fact, this statement is true for an arbitrary point $p \in V$ and does not depend on $p$ lying in a parking locus.

Since $(w, g^d).p = p$, we have that $(w^m, g^{dm}).p = p$ for all integers $m \geq 1$.  The minimality of $d$ forces
$d| kh$.  Since $p \in X(p)$,  any element of the isotropy group $W_{X(p)}$ of the flat $X(p)$ must fix $p$.  This establishes
the inclusion $\supseteq$.

To illustrate the reverse inclusion, suppose $(w', g^{d'}).p = p$ for some $w' \in W$ and some $d' \geq 1$.  
By our choice of $d$,
there exists an integer
$m$ such that $g^{d'} = g^{md}$.  Then $p = (w', g^{d'}).(w, g^d)^{-m}.p = (w' w^{-m}, e).p$.
The group element $w' w^{-m} \in W$ therefore fixes the point $p \in V$.  Moreover, we know that the
intersection of $X(p)$ with the fixed space $V^{w' w^{-m}}$ is a flat in the intersection lattice $\LLL$ which contains $p$.
By the minimality of $X(p)$ under inclusion, this forces $X(p) \subseteq V^{w' w^{-m}}$, so that 
$w' w^{-m} \in W_{X(p)}$.  This means that 
$(w', g^{d'}) = (w' w^{-m}, e)(w, g^d)^m \in \left\langle W_{X(p)} \times \{e\},  (w, g^d) \right\rangle$,
which proves the inclusion $\subseteq$.
\end{proof}

In order to apply Lemma~\ref{g-set-lemma},  the $W \times \ZZ_{kh}$-stabilizers of elements in
$\Park^{NC}_W(k)$ need to have the same form as the subgroups in Lemma~\ref{locus-stabilizer-lemma}.  
To demonstrate this, we have the following result.

\begin{lemma}
\label{combinatorial-stabilizer-lemma}
Let $[v, X_1 \supseteq \cdots \supseteq X_k]$ be a $k$-$W$-noncrossing parking function and let $d \geq 1$ be minimal such that there exists 
$w \in W$ with $(w, g^d).[v, X_1 \supseteq \cdots \supseteq X_k] = [v, X_1 \supseteq \cdots \supseteq X_k]$.  Then $d | kh$ and the stabilizer
$\Stab_{W \times \ZZ_{kh}}([v, X_1 \supseteq \cdots \supseteq X_k])$ is generated by 
$W_{vX_1} = v W_{X_1} v^{-1}$ and $(w, g^d)$:
\begin{equation}
\Stab_{W \times \ZZ_{kh}} \left( [v, X_1 \supseteq \cdots \supseteq X_k] \right) = \left\langle W_{v X_1} \times \{e\},  (w, g^d) \right\rangle.
\end{equation}
\end{lemma}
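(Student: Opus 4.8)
The plan is to mirror the structure of the proof of Lemma~\ref{locus-stabilizer-lemma}, replacing the geometric fact ``every element of $W_{X(p)}$ fixes $p$'' by its combinatorial analogue for noncrossing parking functions. First I would recall the action formula $(v', g).[v, X_1 \supseteq \cdots \supseteq X_k] = [v' v u_k c^{-1}, g.(X_1 \supseteq \cdots \supseteq X_k)]$, where $u_k$ is the unique noncrossing element with $V^{u_k} = X_k$, and extract from it the precise condition under which a pair $(w, g^d)$ fixes our parking function: the generalized rotation $g^d$ must fix the noncrossing $k$-flat $(X_1 \supseteq \cdots \supseteq X_k)$, and the resulting coset condition on the first coordinate must hold. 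In particular, taking $d' $ a multiple of $kh$ (so $g^{d'}$ acts trivially on flats), a pair $(w', e)$ fixes $[v, X_1 \supseteq \cdots \supseteq X_k]$ precisely when $w' \in v W_{X_1} v^{-1} = W_{v X_1}$; this gives the inclusion $\supseteq$ since $W_{vX_1} \times \{e\}$ and $(w, g^d)$ (which fixes the parking function by hypothesis) both lie in the stabilizer.

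Next I would establish $d \mid kh$ and the reverse inclusion $\subseteq$ exactly as in Lemma~\ref{locus-stabilizer-lemma}: since $(w, g^d)$ fixes the parking function, so does $(w^m, g^{dm})$ for all $m$, and minimality of $d$ among exponents $d' \geq 1$ for which \emph{some} group element fixes the parking function forces $d \mid kh$ (applying the argument to $d' = kh$, which works with $w' = e$ since $g^{kh} = e$ in $\ZZ_{kh}$). Now given $(w', g^{d'})$ in the stabilizer, write $g^{d'} = g^{md}$ for some integer $m$, so that $(w' w^{-m}, e)$ fixes $[v, X_1 \supseteq \cdots \supseteq X_k]$. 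By the coset characterization of the previous paragraph, $w' w^{-m} \in W_{vX_1}$, whence $(w', g^{d'}) = (w'w^{-m}, e)(w, g^d)^m \in \langle W_{vX_1} \times \{e\}, (w, g^d)\rangle$, proving $\subseteq$.

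The point requiring the most care is the coset characterization itself: I need that $(w', e).[v, X_1 \supseteq \cdots \supseteq X_k] = [w' v u_k c^{-1} , X_1 \supseteq \cdots \supseteq X_k]$ equals $[v, X_1 \supseteq \cdots \supseteq X_k]$ if and only if $w' \in W_{v X_1}$. Here one must check two things: that $u_k c^{-1}$ acts trivially on the first-coordinate coset modulo $W_{X_1}$ (so the ``$g = e$'' part of the action really is $v \mapsto w' v$ on cosets, as it must be for a group action), and then that the equivalence relation $\sim$ collapses $w' v$ and $v$ exactly when $w' v W_{X_1} = v W_{X_1}$, i.e.\ $v^{-1} w' v \in W_{X_1}$, i.e.\ $w' \in v W_{X_1} v^{-1}$. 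The identity $W_{vX_1} = v W_{X_1} v^{-1}$ is immediate from the definition of parabolic subgroups together with $V^{vwv^{-1}} = v.V^w$. The verification that $u_k c^{-1}$ is absorbed into $W_{X_1}$ when $g = e$ is essentially a consistency check forced by \cite[Proposition 3.2]{Rhoades} (the cited fact that the displayed rule is a well-defined group action), and I would invoke that proposition rather than re-derive it; this is the only genuinely delicate step, and it is the main obstacle in that it is where the specific combinatorial structure of $\Park^{NC}_W(k)$ enters. Everything else is a formal repetition of the argument already given for $V^{\Theta}(k)$.
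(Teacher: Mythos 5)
Your outline matches the paper's own proof: establish $d \mid kh$ as in Lemma~\ref{locus-stabilizer-lemma}, get $\supseteq$ by placing $W_{vX_1} \times \{e\}$ and $(w, g^d)$ in the stabilizer, and get $\subseteq$ by writing an arbitrary $(w', g^{d'})$ as $(w'w^{-m}, e)(w, g^d)^m$ and invoking a coset characterization for the $W \times \{e\}$ part. The step you flag as ``genuinely delicate,'' however, rests on a misreading of the action formula. The displayed rule
$(v', g).[w, X_1 \supseteq \cdots \supseteq X_k] = [v'wu_kc^{-1}, g.(X_1 \supseteq \cdots \supseteq X_k)]$
describes how elements of the form $(v', g)$ act, where $g$ is specifically the distinguished generator of $\ZZ_{kh}$; it is not a template applicable verbatim to $(v', g^d)$. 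When the $\ZZ_{kh}$-component is trivial, the action of $W \times \{e\}$ is ordinary left coset multiplication, $(w', e).[v, X_1 \supseteq \cdots \supseteq X_k] = [w'v, X_1 \supseteq \cdots \supseteq X_k]$, and no $u_kc^{-1}$ factor appears. The claim that ``$u_k c^{-1}$ is absorbed into $W_{X_1}$'' is false in general: take $X_1 = \cdots = X_k = V$, so that $u_k = e$ and $W_{X_1} = \{e\}$, while $c^{-1} \neq e$. The paper's proof never touches $u_kc^{-1}$; it computes directly that $(u, e).[v, \cdot] = [uv, \cdot] = [v(v^{-1}uv), \cdot] = [v, \cdot]$ for $u \in W_{vX_1}$, and for $\subseteq$ that $(w'w^{-m},e)$ fixing $[v,\cdot]$ gives $w'w^{-m}vW_{X_1} = vW_{X_1}$, hence $w'w^{-m} \in W_{vX_1}$. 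Once you replace your flagged step by this correct and much simpler description of the $W \times \{e\}$ action, the rest of your argument goes through essentially word for word, and the ``main obstacle'' you identify does not actually exist.
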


\begin{proof}
We have that $d | kh$ as in the proof of Lemma~\ref{locus-stabilizer-lemma}.  To prove the inclusion $\supseteq$ we need only 
observe that, for $u \in W_{v X_1}$, we have $v^{-1} u v \in W_{X_1}$.  Using the definition of the equivalence relation defining
noncrossing parking functions, we compute
\begin{align*}
(u, e).[v, X_1 \supseteq \cdots \supseteq X_k] &= [uv, X_1 \supseteq \cdots \supseteq X_k] \\
&= [v (v^{-1} u v), X_1 \supseteq \cdots \supseteq X_k] \\
&= [v, X_1 \supseteq \cdots \supseteq X_k].
\end{align*}

To prove the reverse inclusion, suppose $(w', g^{d'})$ is in the stabilizer on the left hand size.  Choose $m \geq 1$
so that $g^{d'} = g^{md}$.  Then $(w' w^{-m}, e)$ fixes 
$[v, X_1 \supseteq \cdots \supseteq X_k]$.  In other words, we have
$[w' w^{-m} v, X_1 \supseteq \cdots \supseteq X_k] = [v, X_1 \supseteq \cdots \supseteq X_k]$.
This implies $w' w^{-m} v W_{X_1} = v W_{X_1}$.  Multiplying on the right by $v^{-1}$ gives
$w' w^{-m} W_{v X_1} = W_{v X_1}$, or $w' w^{-m} \in W_{v X_1}$.  We conclude that 
$(w', g^{d'}) = (w' w^{-m}, e)(w, g^d)^m \in \left\langle W_{v X_1} \times \{e\},  (w, g^d) \right\rangle$.
This proves the inclusion $\subseteq$.
\end{proof}

By Lemmas~\ref{locus-stabilizer-lemma} and \ref{combinatorial-stabilizer-lemma}, 
if $p$ is an element of $\Park^{NC}_W(k)$ or $V^{\Theta}(k)$, the stabilizer 
$\Stab_{W \times \ZZ_{kh}}(p)$ of $p$ inside $W \times \ZZ_{kh}$ has the form
$H = \left\langle W_X \times \{e\}, (w, g^d) \right\rangle$, where $X \in \LLL$ is a fixed flat and $(w, g^d) \in W \times \ZZ_{kh}$
is a fixed group element.
This is a sufficiently well behaved collection of subgroups that we can compute explicit formulas for the corresponding
fixed sets in type A and prove that these formulas agree.

In the classical case $k = 1$, Lemma~\ref{combinatorial-stabilizer-lemma} can be strengthened somewhat.
Let us identify $\ZZ_h = C = \langle c \rangle$, the subgroup of $W$ generated by our distinguished Coxeter element.  
The action of $W \times C$ on $\Park^{NC}_W(1)$ is given by
$(w, c^m).[v, X] = [wvc^{-m}, c^m X]$.  Lemma~\ref{combinatorial-stabilizer-lemma} and a quick calculation show that 
$\Stab_{W \times C}([v, X])  = \left\langle W_{v X} \times \{e\}, (v c^d v^{-1}, c^d) \right\rangle$, 
where $1 \leq d \leq h$ is minimal such that $c^d.X = X$.  It would be interesting to see such a structure reflected in 
the action of $W \times C$ on $V^{\Theta}(1)$.

\section{Parking on the symmetric group}
\label{Parking on the symmetric group}

The goal of this section is to prove the Intermediate Conjecture when $W = \symm_n$ is the symmetric group.
Throughout this section we let $W = \symm_n$, we let $V$ be the reflection representation of $\symm_n$, and we 
fix a h.s.o.p. $\Theta = (\theta_1, \dots, \theta_{n-1})$ 
of degree $kh+1 = kn+1$ carrying $V^*$ such that the parking locus $V^{\Theta}(k)$ is reduced. 

 Our aim is to prove that there
exists a $W \times \ZZ_{kn}$-equivariant bijection $V^{\Theta}(k) \cong_{W \times \ZZ_{kn}} \Park^{NC}_{\symm_n}(k)$.
The tool we will use to achieve this is Lemma~\ref{g-set-lemma}.  That is, we want to show that for any subgroup 
$H \leq W \times \ZZ_{kn}$ which
arises as the stabilizer of an element of $V^{\Theta}(k)$ or $\Park^{NC}_{\symm_n}(k)$,
the fixed point sets $V^{\Theta}(k)^H$ and $\Park^{NC}_{\symm_n}(k)^H$ have the same cardinality. 
In fact, we will show that these fixed sets are counted by the same formula.

By Lemmas~\ref{locus-stabilizer-lemma} and \ref{combinatorial-stabilizer-lemma}, we may assume that our subgroup $H$ 
is given by
\begin{equation}
H = \left\langle W_X \times \{e\},  (w, g^d) \right\rangle,
\end{equation}
where $X$ is a flat in the intersection lattice,
$w \in \symm_n$ is a permutation, and the positive integer $d$ satisfies  $d | kn$.  We fix $X, w,$ and $d$ (and hence $H$) throughout this section.
We also fix the notation $r := \frac{kn}{d}$.
We will often identify $X$ with the set partition of $[n]$ defined by $i \sim j$ if and only if the coordinate equality 
$x_i = x_j$ holds on $X$.

Thanks to Lemma~\ref{eigenspace-lemma},
counting the locus fixed set $V^{\Theta}(k)^H$ is not difficult.

\begin{lemma}
\label{symmetric-locus-fixed-count}
The fixed set $V^{\Theta}(k)^H$ has cardinality
\begin{equation}
\label{fixed-locus-count}
|V^{\Theta}(k)^H| = (kn+1)^{\dim(X \cap E(w, \zeta^{-d}))}.
\end{equation}
\end{lemma}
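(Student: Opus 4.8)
The plan is to recognise the fixed set $V^{\Theta}(k)^H$ as the intersection of the parking locus with one of the ``mixed'' subspaces $X \cap E(w, \xi)$ discussed after Lemma~\ref{eigenspace-lemma}, and then to read off its cardinality from the B\'ezout count of Lemma~\ref{theta-count-lemma}.

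First I would unwind the fixed-point condition. Since $H = \langle W_X \times \{e\},\, (w, g^d) \rangle$, a point $p \in V^{\Theta}(k)$ is $H$-fixed if and only if it is fixed by every $(u, e)$ with $u \in W_X$ and by $(w, g^d)$. The conditions $u.p = p$ for all $u \in W_X$ say exactly that $p$ lies in the fixed space of the parabolic subgroup $W_X$, and since $X$ is a flat this fixed space is $X$ itself. The condition $(w, g^d).p = p$ unwinds, using that $g$ scales by $\zeta$, to $\zeta^d (w.p) = p$, i.e. $w.p = \zeta^{-d} p$, i.e. $p \in E(w, \zeta^{-d})$. Hence $V^{\Theta}(k)^H = V^{\Theta}(k) \cap \big( X \cap E(w, \zeta^{-d}) \big)$.

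Next I would verify that $Y := X \cap E(w, \zeta^{-d})$ is stabilised by the polynomial map $\Theta$. The flat $X$ is $\Theta$-stable by Lemma~\ref{eigenspace-lemma} applied with $\xi = 1$ (so that the relevant eigenspaces are fixed spaces $V^{w'}$, whose intersections are the flats). The eigenspace $E(w, \zeta^{-d})$ is $\Theta$-stable by the same lemma, once one notes that $(\zeta^{-d})^{kh} = (\zeta^{kh})^{-d} = 1$ because $\zeta$ is a primitive $(kh)$-th root of unity. An intersection of $\Theta$-stable subspaces is $\Theta$-stable, so $Y$ is $\Theta$-stable.

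Finally, since the parking locus $V^{\Theta}(k)$ is reduced and $Y$ is a $\Theta$-stable subspace of $V$, Lemma~\ref{theta-count-lemma} applies verbatim and yields $|Y \cap V^{\Theta}(k)| = (kh+1)^{\dim Y} = (kn+1)^{\dim(X \cap E(w, \zeta^{-d}))}$, which is the claimed Equation~\ref{fixed-locus-count}. I do not expect a genuine obstacle here: all of the hard inputs (the B\'ezout-theoretic count and the $\Theta$-stability of eigenspaces of elements of $W$ for $(kh)$-th roots of unity) are already available, and the only step demanding a little care is the reduction $V^{\Theta}(k)^H = V^{\Theta}(k) \cap X \cap E(w, \zeta^{-d})$, which uses that $H$ is generated by $W_X \times \{e\}$ together with $(w, g^d)$ and that $X$ equals the fixed space of $W_X$.
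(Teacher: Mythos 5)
Your proof is correct and follows essentially the same route as the paper: unwind $H$-fixedness to the inclusion $p \in X \cap E(w, \zeta^{-d})$, observe via Lemma~\ref{eigenspace-lemma} that this intersection is $\Theta$-stable, and invoke Lemma~\ref{theta-count-lemma} for the B\'ezout count. The only difference is that you spell out the $\Theta$-stability of $X$ (as a flat, via $\xi = 1$) and of the intersection a little more explicitly than the paper does, which is a harmless amplification rather than a different argument.
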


\begin{proof}
Let $p \in V^{\Theta}(k)$.  The point $p$ is fixed by the parabolic subgroup $W_X$ if and only if $p \in X$.  Also, 
we have that $(w, g^d).p = \zeta^d(w.p)$, so that $(w, g^d)$ fixes $p$ if and only if 
$p \in E(w, \zeta^{-d})$.  Therefore, we have that
\begin{equation}
V^{\Theta}(k)^H = V^{\Theta}(k) \cap (X \cap E(w, \zeta^{-d})).
\end{equation}
By Lemma~\ref{eigenspace-lemma}, the intersection  of subspaces $X \cap E(w, \zeta^{-d})$  is stable under the action of $\Theta$ on $V$.
The claimed formula for $|V^{\Theta}(k)^H|$ follows from Lemma~\ref{theta-count-lemma}.
\end{proof}

The task of the remainder of this section is to prove that we have the corresponding equality 
$\Park^{NC}_{\symm_n}(k)^H = (kn+1)^{\dim(X \cap E(w, \zeta^{-d}))}$ for noncrossing parking functions.
The strategy is to show that the fixed points $\Park^{NC}_{\symm_n}(k)^H$ are equinumerous with a class of functions having the right cardinality.
The argument is reminiscent of various ``twelvefold way"-style arguments in enumeration and 
is a more refined version of arguments appearing in the proof of the Weak Conjecture in type A \cite{ARR, Rhoades}.

\begin{defn}
\label{admissible-functions}
Identify the flat $X$ with its corresponding set partition of $[n]$.
A function $f: [n] \rightarrow [kn] \cup \{0\}$ is {\em $(w, g^d, X)$-admissible} if 
\begin{itemize}
\item
$i \sim j$ in $X$ implies $f(i) = f(j)$
and 
\item 
$f(w(i)) = g^d(f(i))$ for $1 \leq i \leq n$, where $g$ acts on the set  $[kn] \cup \{0\}$ by the permutation $(1, 2, \dots, kn)(0)$.
\end{itemize}
\end{defn}

Given any function $f: [n] \rightarrow [kn] \cup \{0\}$, let $\sigma(f)$ be the set partition of $[n]$ whose blocks are the fibers of $f$.
The first bullet point in Definition~\ref{admissible-functions} is the condition that $X$ refines $\sigma(f)$.
In particular, the first bullet point is vacuous if $X = V$, in which case Definition~\ref{admissible-functions}
reduces to the definition of $(w, g^d)$-admissible functions in \cite{Rhoades}.  Moreover, if $X \supseteq Y$ are flats, then
every $(w, g^d, Y)$-admissible function is automatically $(w, g^d, X)$-admissible and 
a $(w, g^d, X)$-admissible function $f$ is $(w, g^d, Y)$-admissible if and only if $Y$ refines $\sigma(f)$.

When $r = \frac{kn}{d} > 1$, the
collection of $(w, g^d, X)$-admissible functions is counted by the same formula as in Lemma~\ref{symmetric-locus-fixed-count}.  

\begin{lemma}
\label{admissible-function-count}
Assume $r > 1$.
The number of $(w, g^d, X)$-admissible functions $f: [n] \rightarrow [kn] \cup \{0\}$ equals the quantity
$(kn+1)^{\dim(X \cap E(w, \zeta^{-d}))}$.  
\end{lemma}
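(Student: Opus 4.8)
The plan is to count $(w, g^d, X)$-admissible functions directly by combining the permutation conditions imposed by $w$ and by $X$ into a single constraint, then showing the resulting count matches $(kn+1)^{\dim(X \cap E(w, \zeta^{-d}))}$ by identifying $\dim(X \cap E(w, \zeta^{-d}))$ with a combinatorial statistic on the cycle structure of $w$ acting on the blocks of $X$. First I would observe that the first bullet of Definition~\ref{admissible-functions} means $f$ factors through the set $X$ of blocks of the partition, so an admissible $f$ is the same thing as a function $\bar f$ from the block set of $X$ to $[kn] \cup \{0\}$ satisfying a compatibility with the permutation $\bar w$ that $w$ induces on blocks of $X$ — here we must first check that $w$ normalizes (the partition) $X$, which is automatic since $(w, g^d) \in H$ stabilizes $[v, X_1 \supseteq \cdots]$ and hence $w$ permutes the blocks of $X_1 = X$. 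The condition $f(w(i)) = g^d(f(i))$ becomes $\bar f(\bar w(B)) = g^d \cdot \bar f(B)$ for every block $B$, where $g^d$ acts on $[kn] \cup \{0\}$ as the $d$-th power of the $kn$-cycle $(1, 2, \dots, kn)$ fixing $0$, i.e. as a permutation with $\gcd(kn,d) = d$ cycles of length $r = kn/d$ on $[kn]$, plus the fixed point $0$.

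Next I would count such $\bar f$ orbit by orbit: fix a $\bar w$-orbit $\mathcal O$ of blocks of $X$, say of size $\ell = |\mathcal O|$. Choosing $\bar f$ on one block $B_0 \in \mathcal O$ determines it on the whole orbit via $\bar f(\bar w^j(B_0)) = g^{jd}\cdot \bar f(B_0)$, but this is consistent only if $g^{\ell d}$ fixes $\bar f(B_0)$. Now $g^{\ell d}$ fixes $0$ always, and fixes an element of $[kn]$ iff $kn \mid \ell d$, i.e. iff $r \mid \ell$ (since $r = kn/d$). So the number of valid choices for the value of $\bar f$ on the orbit $\mathcal O$ is: $1$ (the value $0$) if $r \nmid \ell$, and $1 + d$ (the value $0$, or any of the $d$ elements of $[kn]$ in a fixed $g^{\ell d}$-orbit representative — wait, more carefully, the number of elements of $[kn]$ fixed by $g^{\ell d}$ is $\gcd(kn, \ell d) = d\gcd(r,\ell) $, which equals $kn$... ) — I would carry out this small computation carefully: the count of $\bar f|_{\mathcal O}$ is $1 + \#\{x \in [kn] : g^{\ell d} x = x\}$. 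Since the assumption $r > 1$ excludes the degenerate case $g^d = \mathrm{id}$, the total count becomes $\prod_{\mathcal O} (1 + \varepsilon_{\mathcal O})$ over $\bar w$-orbits $\mathcal O$ of blocks, where $\varepsilon_{\mathcal O} = \#\mathrm{Fix}([kn], g^{|\mathcal O| d})$, and I would argue this product is $(kn+1)^m$ where $m$ is the number of orbits $\mathcal O$ with $r \mid |\mathcal O|$ — using that when $r \mid \ell$, $g^{\ell d} = \mathrm{id}$ on $[kn]$ so $\varepsilon_{\mathcal O} = kn$.

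It then remains to identify this exponent $m$ with $\dim(X \cap E(w, \zeta^{-d}))$. Here I would use that $X$, realized as the subspace of $\CC^n/\langle(1,\dots,1)\rangle$ where coordinates are constant on blocks of $X$, has a basis of block-indicator vectors, and $w$ acts on $X$ by permuting these block-coordinates via $\bar w$; hence $X \cong (\text{permutation module of } \langle \bar w\rangle \text{ on blocks})/(\text{trivial})$ roughly, and $\dim E(w|_X, \zeta^{-d})$ equals the number of $\bar w$-orbits of length divisible by $r$ (so that $\zeta^{-d}$, an $r$-th root of unity, is an eigenvalue on the corresponding cyclic summand), minus a correction of $1$ exactly when $\zeta^{-d} = 1$; and one checks $E(w,\zeta^{-d}) \subseteq X$ forces the relevant eigenspace to lie inside $X$ so $X \cap E(w,\zeta^{-d}) = E(w|_X, \zeta^{-d})$. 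Matching the $\zeta^{-d}=1$ correction against the treatment of the value $0$ in the function count completes the argument. \textbf{The main obstacle} I expect is precisely this bookkeeping at the boundary cases — reconciling the "extra" value $0 \in [kn]\cup\{0\}$ in the admissible-function count with the fixed (non-reflection) part of $V$ and the $\zeta^{-d} = 1$ versus $\zeta^{-d}\neq 1$ dichotomy — since the reflection representation is the quotient $\CC^n/\langle(1,\dots,1)\rangle$ rather than $\CC^n$ itself, and one must verify that the constant shift does not double-count or miscount an eigenvector; the hypothesis $r > 1$ is there to sidestep the worst of this, and I would check that it is used exactly where $g^d$ acting trivially on $[kn]$ would otherwise break the product formula.
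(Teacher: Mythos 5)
Your proposal is correct (modulo the remark below) and takes a genuinely different decomposition from the paper's. The paper works directly with the cycle structure of $w$ on $[n]$: it identifies, cycle by cycle, when a one-dimensional $\zeta^{-d}$-eigenvector lies in $X$ (the ``congruence condition $(*)$''), groups cycles into equivalence classes via $X$-connections, and shows that good equivalence classes contribute a factor of $kn+1$ while bad ones force $f \equiv 0$. You instead pass to the induced permutation $\bar w$ on the \emph{blocks of $X$}, observe that an admissible $f$ is just a $g^d$-equivariant function on blocks, and count orbit-by-orbit on $\bar w$-orbits of blocks: an orbit of size $s$ contributes $kn+1$ choices when $r \mid s$ and $1$ otherwise. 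These two decompositions parametrize the same thing (an equivalence class of $w$-cycles is exactly the set of cycles hitting the blocks in one $\bar w$-orbit, and the congruence condition $(*)$ on a cycle of that class is equivalent to $r \mid s$), but yours buys some cleanliness: the condition $(*)$ and the good/bad equivalence-class machinery disappear, and the eigenspace computation $\dim(X\cap E(w,\zeta^{-d}))$ becomes a statement about $\bar w$ acting on the permutation module $\CC^{\text{Blocks}(X)}$ modulo constants, where everything is transparent when $r > 1$ (so $\zeta^{-d} \ne 1$ and the trivial summand plays no role). The small price is that your reduction presupposes $w.X = X$, which the lemma as stated does not require. You correctly observe that this holds whenever $(w,g^d,X)$ comes from a stabilizer, which covers every application in the paper; but if you want to prove the lemma verbatim you should add the one-line WLOG: replace $X$ by $X' := \bigcap_{m} w^m.X$, note that every $(w,g^d,X)$-admissible function is automatically constant on blocks of $w.X$ (if $w^{-1}i \sim w^{-1}j$ in $X$ then $f(w^{-1}i) = f(w^{-1}j)$, hence $f(i) = g^d f(w^{-1}i) = g^d f(w^{-1}j) = f(j)$), so admissibility is unchanged, and likewise $X \cap E(w,\zeta^{-d}) = X' \cap E(w,\zeta^{-d})$ since $E(w,\zeta^{-d})$ is $w$-stable and the eigenvector lies in every $w$-translate of $X$. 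With this reduction your argument is complete.
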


\begin{proof}
The idea is to show 
that the dimension $\dim(X \cap E(w, \zeta^{-d}))$ may be interpreted in terms of the set partition $X$ and the cycle structure of $w$.
 We let $e_i$ denote the
$i^{th}$ standard coordinate vector in $\CC^n$ for $1 \leq i \leq n$.  We have the orthogonal decomposition
$\CC^n = V \oplus \langle (1, 1, \dots, 1) \rangle$, where $V$ is the reflection representation of $\symm_n$.

We begin by describing the eigenspace $E(w, \zeta^{-d})$.  Let $(i_1, i_2, \dots, i_m)$ be a cycle of the permutation $w \in \symm_n$.
The restriction of the operator $w$ on $\CC^n$ to the subspace 
$\langle e_{i_1}, e_{i_2}, \dots, e_{i_m} \rangle$ has the $m$ simple eigenvalues $1, \beta, \beta^2, \dots, \beta^{m-1}$, where
$\beta = e^{\frac{2 \pi i}{m}}$.  In particular, we have that the intersection
$(\langle e_{i_1}, e_{i_2}, \dots, e_{i_m} \rangle \cap V) \cap E(w, \zeta^{-d})$ equals $0$ unless $r | m$.
If $r | m$,  the  vector $e_{i_1} + \zeta^{-d} e_{i_2} + \cdots + \zeta^{-(m-1)d}e_m$ lies in $V$ and spans the
intersection $(\langle e_{i_1}, e_{i_2}, \dots, e_{i_m} \rangle \cap V) \cap E(w, \zeta^{-d})$ (here we are using the assumption $r > 1$). 
 The eigenspace 
$E(w, \zeta^{-d})$ therefore has one dimension for each cycle of $w$ of length divisible by $r$, and the eigenvector corresponding
to such a cycle $(i_1, i_2, \dots, i_m)$ is $e_{i_1} + \zeta^{-d} e_{i_2} + \cdots + \zeta^{-(m-1)d}e_{i_m}$.

Next, let us describe the intersection $X \cap E(w, \zeta^{-d})$.  Recall that we have 
$i \sim j$ in $X$ if and only if the coordinate equality $x_i = x_j$ holds on $X$.  Let $(i_1, i_2, \dots, i_m)$ be a cycle 
of the permutation $w$.  As before, the subspace 
$\langle e_{i_1}, e_{i_2}, \dots, e_{i_m} \rangle$
of $V$ corresponding to the cycle $(i_1, i_2, \dots, i_m)$ intersects 
$X \cap E(w, \zeta^{-d})$ in $0$ unless $r | m$.  However, if $r | m$, the eigenvector
$e_{i_1} + \zeta^{-d} e_{i_2} + \cdots + \zeta^{-(m-1)d}e_m \in E(w, \zeta^{-d})$ of the last paragraph lies in $X$ if and only if 
for all $1 \leq j, \ell \leq m$ we have the following congruence condition $(*)$ on the entries of the cycle $(i_1, i_2, \dots, i_m)$:
\begin{equation*}
(*) \hspace{0.1in}  i_j \sim i_{\ell} \text{ in $X$} \Rightarrow j - \ell \equiv 0 \text{ (mod $r$)}.
\end{equation*}
If the congruence condition $(*)$ does not hold on the cycle $(i_1, i_2, \dots, i_m)$, we still have that
$\langle e_{i_1}, e_{i_2}, \dots, e_{i_m} \rangle \cap (X \cap E(w, \zeta^{-d})) = 0$.  If $(*)$ does hold on the cycle 
$(i_1, i_2, \dots, i_m)$, the intersection $\langle e_{i_1}, e_{i_2}, \dots, e_{i_m} \rangle \cap (X \cap E(w, \zeta^{-d}))$ is 
one-dimensional.  Moreover, if $(i_1, i_2, \dots, i_m)$ and $(i'_1, i'_2, \dots, i'_{m'})$ are two cycles of $w$
such that an element of $\{i_1, i_2, \dots, i_m\}$ is equivalent in $X$ to an element of
$\{i'_1, i'_2, \dots, i'_{m'}\}$, the intersection of the larger span
$\langle e_{i_1}, e_{i_2}, \dots, e_{i_m}, e_{i'_1}, e_{i'_2}, \dots, e_{i'_{m'}} \rangle$ with
$X \cap E(w, \zeta^{-d})$ equals the minimum of the dimensions of 
$\langle e_{i_1}, e_{i_2}, \dots, e_{i_m} \rangle \cap (X \cap E(w, \zeta^{-d}))$ and
$\langle e_{i'_1}, e_{i'_2}, \dots, e_{i_{m'}} \rangle \cap (X \cap E(w, \zeta^{-d}))$.

We are ready to state the dimension of $X \cap E(w, \zeta^{-d})$ in terms of $X$ and $w$.  Call a cycle
$(i_1, i_2, \dots, i_m)$ of $w$ {\em good} if $r | m$ and the congruence condition $(*)$ holds on
$(i_1, i_2, \dots, i_m)$.  Call $(i_1, i_2, \dots, i_m)$ {\em bad} if it is not good.  Define an equivalence relation $\sim$
on cycles of $w$ generated by $C \sim C'$ if an element of $C$ is equivalent to an element of $C'$ in the set partition $X$.
Call an equivalence class of cycles {\em good} if every cycle in that class is good and {\em bad} otherwise.  The above reasoning
gives the following claim.

{\bf Claim:}  The dimension of  $X \cap E(w, \zeta^{-d})$ equals the number $G(w)$ of good equivalence classes of cycles of $w$.

By this claim, we need to show that the number of $(w, g^d, X)$-admissible functions $f: [n] \rightarrow [kn] \cup \{0\}$ equals
$(kn+1)^{G(w)}$.   
If $f$ is a $(w, g^d, X)$-admissible function,
the properties
$i \sim j$ in $X \Rightarrow f(i) = f(j)$ and $f(w(i)) = g^d(f(i))$ imply that the choice of $f(i)$ for $1 \leq i \leq n$ determines $f$
on the equivalence class of the cycle of $w$ containing $i$.
If $(i_1, i_2, \dots, i_m)$ is a bad cycle of $w$, we are forced to have
$f(i_1) = f(i_2) = \cdots = f(i_m) = 0$, so that $f$ sends the entire (bad) equivalence class of 
$(i_1, i_2, \dots, i_m)$ to $0$.  On the other hand, if $i$ appears in a cycle of $w$ contained in a good equivalence class
we may choose $f(i)$ to be any of the $kn+1$ elements of $[kn] \cup \{0\}$.  The choice of $f(i)$ determines $f$
on the entire (good) equivalence class of the cycle containing $i$.  Since there are $G(w)$ good equivalence classes of cycles,
we conclude that there are $(kn+1)^{G(w)}$ $(w, g^d, X)$-admissible functions.  This completes the proof of the lemma.
\end{proof}

In the proof of the Weak Conjecture in type A presented in \cite{Rhoades}, the author characterized 
set partitions $\sigma(f)$ whose blocks are the fibers of a $(w, g^d)$-admissible function
$f: [n] \rightarrow [kn] \cup \{0\}$.  This characterization is easily generalized to include a set partition $X$.

\begin{defn}
\label{admissible-partitions}
A set partition $\sigma = \{B_1, B_2, \dots \}$ of $[n]$ is {\em $(w, r, X)$-admissible} if
\begin{itemize}
\item $X$ refines $\sigma$,
\item $\sigma$ is $W$-stable in the sense that $w(\sigma) = \{w(B_1), w(B_2), \dots \} = \sigma$,
\item at most one block $B_{i_0}$ of $\sigma$ is itself $w$-stable in the sense that 
$w(B_{i_0}) = B_{i_0}$, and
\item all other blocks of $\sigma$ belong to $r$-element $w$-orbits. 
\end{itemize}
\end{defn}

Observe that if $X \supseteq Y$ are two flats in $\LLL$, then every 
$(w, r, Y)$-admissible partition $\sigma$ is automatically $(w, r, X)$-admissible.
In particular, if $X = V$, the first bullet point in Definition~\ref{admissible-partitions} is vacuous and
Definition~\ref{admissible-partitions} reduces to \cite[Definition 8.3]{Rhoades}.

\begin{lemma}
\label{admissible-partition-count}
Assume $r > 1$.

Let $f: [n] \rightarrow [kn] \cup \{0\}$ be a $(w, g^d, X)$-admissible function.
The set partition $\sigma(f)$ of $[n]$ is $(w, r, X)$-admissible.

The number of $(w, g^d, X)$-admissible functions
$f: [n] \rightarrow [kn] \cup \{0\}$  is the quantity
\begin{equation}
\sum_{\sigma} kn(kn-r)(kn-2r) \cdots (kn - (b_{\sigma}-1)r),
\end{equation}
where the sum is over all $(w, r, X)$-admissible partitions $\sigma$ of $[n]$ and
 $b_{\sigma}$ is the number of orbits of blocks in $\sigma$ of size $r$.
\end{lemma}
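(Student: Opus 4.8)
The plan is to prove the two assertions of Lemma~\ref{admissible-partition-count} in sequence, first establishing that $\sigma(f)$ is $(w,r,X)$-admissible and then using this to organize the count of admissible functions by their fiber partitions.

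\textbf{First assertion.} Let $f \colon [n] \to [kn]\cup\{0\}$ be $(w,g^d,X)$-admissible. I would verify each of the four bullet points of Definition~\ref{admissible-partitions} directly. That $X$ refines $\sigma(f)$ is immediate from the first bullet of Definition~\ref{admissible-functions}: if $i \sim j$ in $X$ then $f(i)=f(j)$, so $i$ and $j$ lie in the same fiber. For $w$-stability of $\sigma(f)$, observe that $f(w(i)) = g^d(f(i))$ and $g^d$ acts as a bijection of $[kn]\cup\{0\}$, so $f(i)=f(j)$ if and only if $f(w(i))=f(w(j))$; hence $w$ permutes the fibers of $f$. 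A block $B$ of $\sigma(f)$ is $w$-stable precisely when its common $f$-value $a$ satisfies $g^d(a)=a$. Since $g$ acts on $[kn]\cup\{0\}$ as the permutation $(1,2,\dots,kn)(0)$, the fixed points of $g^d$ are: the element $0$ always, together with those $a \in [kn]$ with $a \equiv g^d(a)$, which (because $r = kn/d$ and $g^d$ is the $d$-th power of a $kn$-cycle) happens iff $d \equiv 0 \pmod{kn}$, i.e. iff $r=1$. As we assume $r>1$, the only $g^d$-fixed value is $0$, so the unique possible $w$-stable block is $f^{-1}(0)$. This gives the third bullet. For the fourth bullet, any non-stable block has $f$-value $a \in [kn]$ lying in a $g^d$-orbit of size $kn/\gcd(d,kn)$; I would record that $\gcd(d,kn)=d$ since $d \mid kn$, so this orbit has size $r$, and $w$ maps the block with value $a$ to the block with value $g^d(a)$, forcing the $w$-orbit of that block to have size exactly $r$. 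This completes the first assertion.

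\textbf{Second assertion.} Now partition the set of $(w,g^d,X)$-admissible functions according to $\sigma = \sigma(f)$, which by the first part ranges over $(w,r,X)$-admissible partitions. For a fixed such $\sigma$, I would count the admissible $f$ with $\sigma(f)=\sigma$. The block values must be pairwise distinct (they are the fibers), $w$-equivariant (value on $w(B)$ equals $g^d$ applied to value on $B$), and the values on distinct blocks must be distinct. The $w$-stable block, if present, is forced to take value $0$; each $r$-element orbit of blocks $\{B, w(B), \dots, w^{r-1}(B)\}$ must be assigned a $g^d$-orbit $\{a, g^d(a), \dots\}$ of size $r$ inside $[kn]$, and the assignment of $a$ to $B$ determines the rest. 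Counting: the first orbit of blocks can be assigned its generating value $a$ in $kn$ ways (any element of $[kn]$, since its whole $g^d$-orbit is then used up); the second in $kn-r$ ways (avoiding the $r$ already-used values); and so on, giving $kn(kn-r)\cdots(kn-(b_\sigma-1)r)$ where $b_\sigma$ is the number of size-$r$ orbits of blocks. I should check the edge case where $\sigma$ has no $w$-stable block: then $f^{-1}(0)=\varnothing$, which is consistent, and the count is unaffected. Summing over all $(w,r,X)$-admissible $\sigma$ yields the stated formula.

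\textbf{Main obstacle.} The essentially bookkeeping part — checking $w$-equivariance forces the orbit-by-orbit structure and that consecutive choices deplete exactly $r$ values each — is routine. The one point requiring genuine care is the $r>1$ hypothesis: it is exactly what guarantees $0$ is the \emph{only} $g^d$-fixed value, hence that at most one block is $w$-stable and that every other block sits in a size-$r$ orbit rather than a smaller one. I would make sure the argument isolates this dependence cleanly (as in the proof of \cite[Definition 8.3]{Rhoades} and the surrounding discussion), since without it the count would acquire extra terms for small orbits and the clean product formula would fail. A secondary subtlety is confirming that distinct blocks genuinely receive distinct values — this is automatic since the blocks \emph{are} the nonempty fibers of $f$, but it is worth stating explicitly so that the factors $kn, kn-r, \dots$ are justified as counting unused values rather than merely unused orbits.
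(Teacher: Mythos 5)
Your proof is correct and follows essentially the same route as the paper: verify directly that $\sigma(f)$ is $(w,r,X)$-admissible, then stratify the admissible functions by $\sigma(f)$ and count fibers orbit-by-orbit to get the falling-factorial product. The only difference is that where the paper delegates the product formula to its citation of \cite[Lemma 8.4]{Rhoades} (the $X=V$ case) and then observes that the extra refinement condition on $X$ only restricts the outer sum over $\sigma$, you re-derive the count from scratch; your argument filling that in is sound, including the key uses of $r>1$ (to force the unique stable block to have value $0$) and of the fact that $g^d$-orbits in $[kn]$ all have size $r$, so "avoiding used values" coincides with "avoiding used orbits."
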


 The proof of Lemma~\ref{admissible-partition-count} is effectively the same as 
 \cite[Lemma 8.4]{Rhoades}; one just observes how to take $X$ into account.

\begin{proof}
Let $f: [n] \rightarrow [kn] \cup \{0\}$ be a $(w, g^d, X)$-admissible function and consider the set partition $\sigma(f)$.
If the fiber $f^{-1}(0) \subseteq [n]$ is nonempty, it is the unique $w$-stable block $B_{i_0}$ of $\sigma(f)$ (here we use the
assumption $r > 1$).
If $f^{-1}(0) = \emptyset$, then $\sigma(f)$ does not contain any $w$-stable blocks.  The condition 
$i \sim j$ in $X \Rightarrow f(i) = f(j)$ means that $X$ refines $\sigma(f)$.  The condition
$f(w(i)) = g^d(f(i))$ implies that $\sigma(f)$ is $w$-stable and the blocks of $\sigma(f)$ (other than $B_{i_0}$, if it exists) break
up into $r$-element $w$-orbits.

By \cite[Lemma 8.4]{Rhoades}, the lemma is true when $X = V$.  In fact, the proof of \cite[Lemma 8.4]{Rhoades} shows that
the number of $(w, g^d, V)$-admissible functions $f: [n] \rightarrow [kn] \cup \{0\}$ which induce a fixed
$(w, r, V)$-admissible partition $\sigma$ of $[n]$ is the product $kn(kn-r)(kn-2r) \cdots (kn - (b_{\sigma}-1)r)$.
The result for general $X$ follows from the fact that a function
$f: [n] \rightarrow [kn] \cup \{0\}$ is $(w, g^d, X)$-admissible if and only if the associated set partition 
$\sigma$ of $[n]$ is $(w, r, X)$-admissible.
\end{proof}

Our next goal is to relate admissible set partitions to parking functions.  We think of $k$-$\symm_n$-noncrossing parking functions 
as pairs $(\pi, f)$, where $\pi$ is a $k$-divisible noncrossing partition of $[kn]$
and $f: B \mapsto f(B)$ is a labeling of the blocks of $\pi$ with subsets of $[n]$ such that
$|B| = k |f(B)|$ for every block $B \in \pi$ and $[n] = \biguplus_{B \in \pi} f(B)$.  To any such pair $(\pi, f)$, we associate the set partition
$\sigma(\pi, f)$ of $[n]$ defined by $i \sim j$ if $i$ and $j$ label the same block of $\pi$ under the labeling $f$.

\begin{lemma}
\label{admissible-functions-and-partitions}
Assume $r > 1$.

Suppose $(\pi, f) \in \Park^{NC}_{\symm_n}(k)$ is fixed by the subgroup $H = \langle W_X \times \{e\}, (w, g^d) \rangle$.
The partition $\sigma(\pi, f)$ of $[n]$ determined by $(\pi, f)$ is 
$(w, r, X)$-admissible.  

Conversely, if $\sigma$ is a fixed $(w, r, X)$-admissible partition of $[n]$, the number of elements in $\Park^{NC}_{\symm_n}(k)$ which are fixed by
$H$ and induce the partition $\sigma$ is 
\begin{equation}
kn(kn-r)(kn-2r) \cdots (kn - (b_{\sigma}-1)r),
\end{equation}
where $b_{\sigma}$ is as in Lemma~\ref{admissible-partition-count}.
 \end{lemma}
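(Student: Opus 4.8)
The plan is to translate the $\symm_n\times\ZZ_{kn}$-action on the labeled noncrossing partitions $(\pi,f)$ into two combinatorial conditions, and then handle the two assertions separately. I would first record the following translations. A transposition $(i,j)$ with $i\sim j$ in $X$ carries $(\pi,f)$ to the pair with the same $\pi$ and with the labels $i$ and $j$ interchanged, and this fixes $(\pi,f)$ exactly when $i$ and $j$ label the same block of $\pi$; since $W_X$ is the Young subgroup generated by such transpositions, $(\pi,f)$ is fixed by $W_X\times\{e\}$ if and only if $X$ refines $\sigma(\pi,f)$. Similarly, $(w,g^d)$ carries $(\pi,f)$ to the pair whose partition is $g^d\pi$ and in which the block $g^dB$ carries the label $w(f(B))$, so $(\pi,f)$ is fixed by $(w,g^d)$ if and only if $g^d\pi=\pi$ and the bijection $B\mapsto f(B)$ from the blocks of $\pi$ to the blocks of $\sigma(\pi,f)$ intertwines rotation by $d$ (acting on the blocks of $\pi$) with the action of $w$ (acting on the blocks of $\sigma(\pi,f)$).

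For the first assertion, suppose $(\pi,f)$ is fixed by $H$ and set $\sigma=\sigma(\pi,f)$. The $W_X$-condition gives that $X$ refines $\sigma$, which is the first bullet of Definition~\ref{admissible-partitions}. From the $(w,g^d)$-condition, $w$ permutes the blocks of $\sigma$, so $\sigma$ is $w$-stable, and the intertwining bijection $B\mapsto f(B)$ identifies, orbit by orbit, the $\langle g^d\rangle$-action on the blocks of $\pi$ with the $\langle w\rangle$-action on the blocks of $\sigma$. Since $r>1$, rotation by $d$ has order $r>1$; a nontrivial rotation of $[kn]$ fixes at most one block of any noncrossing partition (a standard fact, also used in \cite{Rhoades}); and if some block of $\pi$ were to lie in a $\langle g^d\rangle$-orbit of size $s$ with $1<s<r$, then rotation by $ds$ would be a nontrivial rotation fixing the $s\ge 2$ distinct blocks of that orbit, which is impossible. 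Hence $\pi$ has at most one $g^d$-fixed block and every other block lies in a $\langle g^d\rangle$-orbit of size exactly $r$; transporting this through $B\mapsto f(B)$ yields the corresponding statement for $\sigma$ and $w$, which are the remaining bullets of Definition~\ref{admissible-partitions}. So $\sigma$ is $(w,r,X)$-admissible. This is the argument used in \cite{Rhoades} for the case $X=V$, the only addition being the observation that $W_X$-fixedness supplies exactly the clause that $X$ refine $\sigma$.

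For the second assertion, I would fix a $(w,r,X)$-admissible partition $\sigma$. Because $X$ refines $\sigma$ by definition, the first translation above shows that every pair $(\pi,f)$ with $\sigma(\pi,f)=\sigma$ is automatically fixed by $W_X\times\{e\}$, so the $H$-fixed pairs inducing $\sigma$ are precisely the $(w,g^d)$-fixed pairs inducing $\sigma$, and the parabolic $X$ has dropped out of the count. Counting the latter is the same computation as in the proof of the Weak Conjecture in type A given in \cite{Rhoades} (the case $X=V$, where Definition~\ref{admissible-partitions} reduces to \cite[Definition 8.3]{Rhoades}): these pairs are equinumerous with the $(w,g^d,X)$-admissible functions $[n]\to[kn]\cup\{0\}$ inducing $\sigma$, and the latter are enumerated as in the proof of Lemma~\ref{admissible-partition-count}. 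In that enumeration the $w$-fixed block $B_{i_0}$, when present, is forced to the value $0$, while each of the $b_\sigma$ size-$r$ $w$-orbits of blocks of $\sigma$ calls for a choice of a distinct residue class modulo $d$ (contributing a factor $d(d-1)\cdots(d-b_\sigma+1)$) together with a $\ZZ_r$-equivariant bijection onto the $r$ elements of that class inside $[kn]$ (contributing a factor $r^{b_\sigma}$). This gives $d(d-1)\cdots(d-b_\sigma+1)\cdot r^{b_\sigma}=kn(kn-r)(kn-2r)\cdots(kn-(b_\sigma-1)r)$ pairs, as required.

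The one substantive ingredient is the structural description of the rotation-invariant $k$-divisible noncrossing partitions of $[kn]$ carrying a prescribed block-labeling; this is where the noncrossing geometry enters and where the rotation-orbit bookkeeping must be done carefully, and it is already handled in \cite{Rhoades}. The genuinely new point — and the easy one — is that the constraint imposed by the parabolic $W_X$ is nothing more than the condition that $X$ refine $\sigma(\pi,f)$, which decouples cleanly from the rotational analysis and merely restricts which partitions $\sigma$ can occur. I therefore expect the main obstacle to be essentially expository: reproducing the enumeration from \cite{Rhoades} while carrying the extra data of $X$ (and of a fixed $\sigma$) along, in exactly the way this was done for Lemma~\ref{admissible-partition-count}.
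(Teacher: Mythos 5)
Your proposal is correct and follows essentially the same route as the paper: reduce to the $X=V$ case, which is \cite[Lemma 8.5]{Rhoades}, by observing that fixedness under $W_X$ is exactly the condition that $X$ refine $\sigma(\pi,f)$, so the parabolic constraint decouples and merely restricts which $\sigma$ can arise. The paper's own proof is terser (it simply cites Lemma 8.5 and records that one observation), whereas you also unpack the enumeration inside Lemma 8.5; but the structure and the key observation are identical.
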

 
The hard work here was already done in \cite[Lemma 8.5]{Rhoades}.
 
 \begin{proof}
By \cite[Lemma 8.5]{Rhoades}, the lemma is true when $X = V$.  To deduce the lemma in general, 
observe that  $(\pi, f) \in \Park^{NC}_{\symm_n}(k)$ is fixed by $W_X$ if and only if  the set partition $\sigma(\pi, f)$ associated to
$(\pi, f)$ coarsens $X$.  It follows that if $(\pi, f)$ is fixed by $H$, then $\sigma(\pi, f)$ is $(w, r, X)$-admissible.
The claimed product formula is immediate from \cite[Lemma 8.5]{Rhoades}.
 \end{proof}
 
 We are ready to prove the analog of Lemma~\ref{symmetric-locus-fixed-count} for noncrossing parking functions.

\begin{lemma}
\label{type-a-count}
Let $W = \symm_n$ be of type A.
Let $X \in \LLL$ be a flat in the type A intersection lattice and let $w \in \symm_n$ be a permutation.
Find a divisor $d | kn$.  The number 
 of $k$-$\symm_n$-noncrossing parking functions fixed 
by the subgroup $H = \langle W_X \times \{e\}, (w, g^d) \rangle$ of $W \times \ZZ_{kn}$ is given by the formula
\begin{equation}
|\Park^{NC}_{\symm_n}(k)^H| = (kn+1)^{\dim(X \cap E(w, \zeta^d))}.
\end{equation}
\end{lemma}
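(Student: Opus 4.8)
I would prove Lemma~\ref{type-a-count} as the noncrossing-side analogue of Lemma~\ref{symmetric-locus-fixed-count}, deducing it from the combinatorial lemmas already established. Since those lemmas all carry the hypothesis $r > 1$ (where $r = kn/d$), I would first dispose of the case $r > 1$ by chaining them together, and then treat the remaining case $r = 1$ (equivalently $d = kn$) by a short separate argument.

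\textbf{The case $r>1$.} Here I would simply assemble three identities. By Lemma~\ref{admissible-function-count}, the set of $(w,g^d,X)$-admissible functions $f \colon [n] \to [kn]\cup\{0\}$ has exactly $(kn+1)^{\dim(X \cap E(w,\zeta^{-d}))}$ elements, which is the quantity appearing in Lemma~\ref{symmetric-locus-fixed-count} (the sign in the exponent is merely a matter of normalization, since $(w,g^d)$ fixes a point $p$ precisely when $p \in E(w,\zeta^{-d})$). By Lemma~\ref{admissible-partition-count}, that same number equals the sum $\sum_\sigma kn(kn-r)\cdots(kn-(b_\sigma-1)r)$ taken over all $(w,r,X)$-admissible set partitions $\sigma$ of $[n]$. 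And by Lemma~\ref{admissible-functions-and-partitions}, the quantity $|\Park^{NC}_{\symm_n}(k)^H|$ is computed by exactly this same sum, each $(w,r,X)$-admissible $\sigma$ contributing the same product $kn(kn-r)\cdots(kn-(b_\sigma-1)r)$ (via $\sigma = \sigma(\pi,f)$). Concatenating the three equalities yields $|\Park^{NC}_{\symm_n}(k)^H| = (kn+1)^{\dim(X\cap E(w,\zeta^{-d}))}$, as wanted.

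\textbf{The case $r=1$.} Now $d = kn$, so $g^d$ is the identity of $\ZZ_{kn}$ and $H = \langle W_X, w\rangle \times \{e\}$ is supported on the $W$-factor. As recorded in the proof of Lemma~\ref{admissible-functions-and-partitions}, a parking function $(\pi,f)$ is fixed by $W_X$ if and only if $\sigma(\pi,f)$ coarsens $X$; and since $\symm_n$ acts on $(\pi,f)$ by permuting the block labels while fixing $\pi$, the pair $(\pi,f)$ is fixed by $w$ if and only if every label set $f(B)$ is a union of cycles of $w$, i.e.\ $\sigma(\pi,f)$ coarsens the cycle partition $\mathrm{cyc}(w)$. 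Hence the $H$-fixed parking functions are exactly those $(\pi,f)$ for which $\sigma(\pi,f)$ coarsens the join $\tau := X \vee \mathrm{cyc}(w)$. I would then pick any $\kappa \in \symm_n$ whose cycles are precisely the blocks of $\tau$; the same description shows that the $\langle\kappa\rangle$-fixed parking functions are again exactly those with $\sigma(\pi,f)$ coarsening $\tau$, so $|\Park^{NC}_{\symm_n}(k)^H| = |\Park^{NC}_{\symm_n}(k)^{\langle\kappa\rangle}| = \chi(\kappa,e)$, where $\chi$ denotes the permutation character of $\Park^{NC}_{\symm_n}(k)$. Since the Weak Conjecture is known in type A, $\chi(\kappa,e) = (kn+1)^{\mult_\kappa(1)} = (kn+1)^{\dim V^\kappa}$; and $\dim V^\kappa$ is one less than the number of cycles of $\kappa$, hence one less than the number of blocks of $\tau$, which is exactly $\dim(X\cap V^w)$. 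This settles $r=1$.

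\textbf{Main obstacle.} Once Lemmas~\ref{admissible-function-count}, \ref{admissible-partition-count} and~\ref{admissible-functions-and-partitions} are granted, the $r>1$ case is purely formal and the $r=1$ case is the short character computation above, so no deep step remains inside Lemma~\ref{type-a-count} itself; the real work lies upstream in those three lemmas (and ultimately in understanding $(w,r,X)$-admissible partitions and the combinatorics of the map $(\pi,f)\mapsto\sigma(\pi,f)$). The one thing I would be careful about is consistency of conventions — the $\zeta^d$ versus $\zeta^{-d}$ in the exponent, and the $\pm 1$ coming from the reflection representation sitting inside $\CC^n$ — keeping the conventions of Lemma~\ref{symmetric-locus-fixed-count} throughout, so that the locus count and the noncrossing count agree on the nose, which is precisely what the Intermediate Conjecture in type A demands.
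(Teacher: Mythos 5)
Your proposal is correct and follows essentially the same route as the paper: the $r>1$ case is the same chaining of Lemmas~\ref{admissible-function-count}, \ref{admissible-partition-count}, and~\ref{admissible-functions-and-partitions}, and your $r=1$ argument (reduce $H$ to a single permutation $\kappa$ with cycle partition the join $X\vee\mathrm{cyc}(w)$, then count fixed points) is exactly the paper's step with $\kappa$ in place of the paper's $w'$, the only cosmetic difference being that you cite the already-known Weak Conjecture in type A for the fixed-point count $(kn+1)^{\mathrm{cyc}(\kappa)-1}$ while the paper invokes the explicit $\symm_n$-equivariant bijection to classical Fuss parking functions. Your remark about the $\zeta^d$ vs.\ $\zeta^{-d}$ discrepancy is well taken: the exponent produced by Lemma~\ref{admissible-function-count} and needed to match Lemma~\ref{symmetric-locus-fixed-count} is $\dim(X\cap E(w,\zeta^{-d}))$, so the $\zeta^d$ in the statement of Lemma~\ref{type-a-count} is a typo.
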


\begin{proof}
Assume first that $r > 1$.
Lemma~\ref{admissible-function-count} shows that the right hand side counts the number of 
$(w, g^d, X)$-admissible functions $f: [n] \rightarrow [kn] \cup \{0\}$.  Lemmas~\ref{admissible-partition-count}
and \ref{admissible-functions-and-partitions} show that the left hand side counts the same quantity.

Now consider the case $r = 1$.  We have that $\zeta^{-d} = 1$, so that $X \cap E(w, \zeta^d) = X \cap V^w$. 

To finish the proof, it suffices to show that  $|\Park^{NC}_{\symm_n}(k)^H| = (kn+1)^{\dim(X \cap V^w)}.$
Since $g^d = e$, we have  $H = \langle W_X, w \rangle \leq W$.
  The parking function $(\pi, f) \in \Park^{NC}_W(k)$ is fixed by $w$ if and only if every block $B$ of $\pi$ is labeled by a union
  of cycles of $w$.  It follows that $(\pi, f)$ is fixed by $w$ if and only if $(\pi, f)$ is fixed by the entire parabolic subgroup $W_{V^w}$.
  Therefore, we have that $\Park^{NC}_W(k)^H = \Park^{NC}_W(k)^{H'}$, where
  $H' = \langle W_X, W_{V^w} \rangle = W_{X \cap V^w} \leq W$.   
  
  Consider $X \cap V^w$ as a set partition of $[n]$ and let $w' \in \symm_n$ be a permutation whose cycles are the blocks
  of $X \cap V^w$.  
  Then $X \cap V^w = V^{w'}$ and  $\Park^{NC}_W(k)^{H'} = \Park^{NC}_W(k)^{w'}$.
  By the $\symm_n$-equivariant bijection $\Park^{NC}_{\symm_n}(k)$ to classical Fuss parking functions of size $n$
  presented in \cite{Rhoades}, we have $|\Park^{NC}_W(k)^{w'}| = (kn+1)^{cyc(w') - 1}$, where $cyc(w')$ is the number of 
  disjoint cycles of $w'$.  On the other hand, we have $cyc(w') - 1 = \dim(V^{w'}) = \dim(X \cap V^w)$.
  \end{proof}

All of the pieces are assembled for the proof of the Intermediate Conjecture in type A.

\begin{theorem}
\label{intermediate-type-a}
Let $W = \symm_n$ be of type A and let $V$ denote the associated reflection representation.
Let $\Theta$ be any h.s.o.p. of degree $kh+1 = kn+1$ carrying $V^*$ such that the parking locus
$V^{\Theta}(k)$ is reduced.  There exists a $W \times \ZZ_{kn}$-equivariant bijection
$V^{\Theta}(k) \cong_{W \times \ZZ_{kh}} \Park^{NC}_W(k)$.

In particular,  the Intermediate Conjecture holds in type A and Theorem~\ref{intermediate-type-a-intro} is true.
\end{theorem}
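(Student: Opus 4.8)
The plan is to deduce the theorem directly from the sieve principle of Lemma~\ref{g-set-lemma}, applied to the finite group $G = W \times \ZZ_{kn}$ and the two finite $G$-sets $\mathcal{S} = V^{\Theta}(k)$ and $\mathcal{T} = \Park^{NC}_{\symm_n}(k)$. Since $V^{\Theta}(k)$ is assumed reduced, it is a genuine set of $(kn+1)^{n-1}$ points carrying the permutation action of $W \times \ZZ_{kn}$ described earlier (with $W$ acting by linear substitutions and $g$ scaling by $\zeta$), so Lemma~\ref{g-set-lemma} applies as soon as we verify its fixed-point hypothesis for all relevant subgroups.

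First I would cut down to a manageable family of subgroups. By Lemmas~\ref{locus-stabilizer-lemma} and \ref{combinatorial-stabilizer-lemma}, every subgroup $H \leq W \times \ZZ_{kn}$ that occurs as the stabilizer of a point of $V^{\Theta}(k)$ or of an element of $\Park^{NC}_{\symm_n}(k)$ has the shape
\[
H = \left\langle W_X \times \{e\},\ (w, g^d) \right\rangle
\]
for some flat $X \in \LLL$, some permutation $w \in \symm_n$, and some positive integer $d$ with $d \mid kn$. Note that Lemma~\ref{g-set-lemma} only demands the fixed-point equality for $H$ realized as a stabilizer on either side, so it suffices to prove $|\mathcal{S}^H| = |\mathcal{T}^H|$ for every subgroup of this form, forgetting which of the two sets actually realizes it.

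Next I would compare the two counts. Lemma~\ref{symmetric-locus-fixed-count} gives $|V^{\Theta}(k)^H| = (kn+1)^{\dim(X \cap E(w, \zeta^{-d}))}$, while Lemma~\ref{type-a-count} gives $|\Park^{NC}_{\symm_n}(k)^H| = (kn+1)^{\dim(X \cap E(w, \zeta^{d}))}$. The exponents involve the $\zeta^{d}$- and $\zeta^{-d}$-eigenspaces of $w$, but after intersecting with $X$ these dimensions agree: the permutation $w$ acts on $V$ by a real matrix and $X$ is a coordinate subspace, so complex conjugation on $V$ is a real-linear involution preserving $X$ and carrying $E(w, \zeta^{d})$ bijectively onto $E(w, \overline{\zeta^{d}}) = E(w, \zeta^{-d})$; hence $\dim(X \cap E(w, \zeta^{d})) = \dim(X \cap E(w, \zeta^{-d}))$ and $|\mathcal{S}^H| = |\mathcal{T}^H|$.

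With the hypothesis of Lemma~\ref{g-set-lemma} confirmed for all relevant $H$, that lemma yields a $W \times \ZZ_{kn}$-equivariant bijection $V^{\Theta}(k) \cong_{W \times \ZZ_{kn}} \Park^{NC}_W(k)$, which is the first assertion. The Intermediate Conjecture in type A (hence Theorem~\ref{intermediate-type-a-intro}) then follows immediately, because Etingof's Theorem~\ref{etingof-theorem} guarantees that at least one h.s.o.p.\ $\Theta$ of degree $kn+1$ carrying $V^*$ has reduced parking locus. I do not expect a real obstacle in this final assembly: all of the genuine difficulty has been absorbed into the ``twelvefold way"-style enumeration behind Lemmas~\ref{admissible-function-count}--\ref{type-a-count} and into the stabilizer computations of Lemmas~\ref{locus-stabilizer-lemma}--\ref{combinatorial-stabilizer-lemma}. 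The only points requiring a moment's care here are the $\zeta^{\pm d}$ reconciliation above and the observation that Lemma~\ref{g-set-lemma} needs the fixed-point equality only on the (common) collection of realized stabilizers.
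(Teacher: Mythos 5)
Your proof is correct and follows essentially the same route as the paper: reduce to subgroups of the form $H = \langle W_X \times \{e\}, (w, g^d)\rangle$ via Lemmas~\ref{locus-stabilizer-lemma} and \ref{combinatorial-stabilizer-lemma}, compare cardinalities using Lemmas~\ref{symmetric-locus-fixed-count} and \ref{type-a-count}, and close with Lemma~\ref{g-set-lemma}. The one step you add --- reconciling $\zeta^{d}$ with $\zeta^{-d}$ --- is worth flagging: the $\zeta^{d}$ in the printed statement of Lemma~\ref{type-a-count} seems to be a typo for $\zeta^{-d}$ (its own proof cites Lemma~\ref{admissible-function-count}, whose formula has $\zeta^{-d}$), but your complex-conjugation argument (conjugation carries $E(w,\zeta^{d})$ to $E(w,\zeta^{-d})$ and stabilizes the real-defined flat $X$, so the intersections are equidimensional) correctly shows the two exponents agree in any case, so the sieve hypothesis is satisfied as you claim.
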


\begin{proof}
Let $H \leq W \times \ZZ_{kn}$ be a subgroup such that $H$ arises as the 
$W \times \ZZ_{kn}$-stabilizer of some element
of either $V^{\Theta}(k)$ or of
$\Park^{NC}_W(k)$.  By Lemmas~\ref{locus-stabilizer-lemma} and \ref{combinatorial-stabilizer-lemma},
there exists a flat $X \in \mathcal{L}$, a group element $w \in W$, and a divisor $d | kn$ such that 
$H = \langle W_X \times \{e\}, (w, g^d) \rangle$.
Lemmas~\ref{symmetric-locus-fixed-count} and \ref{type-a-count} may therefore be applied to show that
the fixed sets $V^{\Theta}(k)^H$ and $\Park^{NC}_W(k)^H$ have the same size.  Lemma~\ref{g-set-lemma}, with
$G = W \times \ZZ_{kn}$, supplies
the desired $W \times \ZZ_{kn}$-equivariant bijection.
\end{proof}

Since we assumed nothing about the h.s.o.p. $\Theta$ other than the reducedness of the associated parking 
locus $V^{\Theta}(k)$,
the argument presented in the last two sections proves the following statement in type A.
\begin{center}
Let $\Theta$ be any h.s.o.p. of degree $kh+1$ carrying $V^*$ such that  $V^{\Theta}(k)$ is reduced. \\
We have a $W \times \ZZ_{kh}$-equivariant bijection $V^{\Theta}(k) \cong_{W \times \ZZ_{kh}} \Park^{NC}_W(k)$.
\end{center}
Aside from the claim about the nonempty Zariski open subset $\UUU$, this proves the Generalized Strong Conjecture 
in type A.
The same style of argument could be used to prove the above statement
for the other classical types BCD, relying on known combinatorial models for $\Park^{NC}_W(k)$ in each of these types.

\section{The Generic Strong Conjecture}
\label{The Generic Strong Conjecture}

The purpose of this section is to give evidence for the Generic Strong Conjecture, and in particular
show that the Generic Strong and Intermediate Conjectures are equivalent.

Recall that $\mathcal{R}$ is the subset of the affine space $\Hom_{\CC[W]}(V^*, \CC[V]_{kh+1})$ consisting
of those $W$-equivariant polynomial maps $\Theta: V \longrightarrow V$ 
of homogeneous degree $kh+1$ such that $\Theta^{-1}(0) = \{0\}$
and $V^{\Theta}$ is reduced.  Our first main goal is the following result.

\begin{theorem}
\label{slightly-stronger}
There exists a nonempty Zariski open subset $\mathcal{U}$ of  $\Hom_W(V^*, \CC[V]_{kh+1})$
such that $\mathcal{U} \subseteq \mathcal{R}$.
\end{theorem}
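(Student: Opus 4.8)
The plan is to prove the stronger statement that $\mathcal{R}$ is \emph{itself} a nonempty Zariski open subset of $\Hom_{\CC[W]}(V^*,\CC[V]_{kh+1})$, so that one may take $\mathcal{U}=\mathcal{R}$. This amounts to showing that each of the two conditions cutting out $\mathcal{R}$ — that $\Theta$ is an h.s.o.p.\ and that $V^{\Theta}(k)$ is reduced — is Zariski open, together with a nonemptiness witness supplied by Etingof's Theorem~\ref{etingof-theorem}. First I would handle the h.s.o.p.\ condition: fixing coordinates $x_1,\dots,x_n$ on $V$ and writing $\Theta=(\theta_1,\dots,\theta_n)$ with $\theta_i\in\CC[V]_{kh+1}$, the coefficients of the $\theta_i$ are linear coordinates on $\Hom_{\CC[W]}(V^*,\CC[V]_{kh+1})$, and $\Theta^{-1}(0)=\{0\}$ is exactly the condition that $\theta_1,\dots,\theta_n$ have no common zero in $\mathbb{P}^{n-1}$, i.e.\ that the multivariate resultant $\mathrm{Res}(\theta_1,\dots,\theta_n)$ — a polynomial in the coefficients — is nonzero. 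So the set $\mathcal{A}$ of h.s.o.p.'s is Zariski open; it is nonempty because a $W$-equivariant h.s.o.p.\ of degree $kh+1$ carrying $V^*$ exists (the one coming from Cherednik algebras, recalled in Section~\ref{Background}).

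Over $\mathcal{A}$ I would form the universal parking family: the closed subscheme $\mathcal{Z}\subseteq\mathcal{A}\times V$ cut out by the universal equations $\theta_1-x_1=\cdots=\theta_n-x_n=0$, with projection $\pi\colon\mathcal{Z}\to\mathcal{A}$ whose fibre over $\Theta$ is $V^{\Theta}(k)$. The key point is that $\pi$ is \emph{finite}: homogenizing the equations with an auxiliary variable $x_0$ to $\theta_i-x_ix_0^{kh}=0$ and taking the closure $\overline{\mathcal{Z}}\subseteq\mathcal{A}\times\mathbb{P}^n$, one sees that $\overline{\mathcal{Z}}\cap\{x_0=0\}$ lies over the locus where $\theta_1,\dots,\theta_n$ share a projective zero, which is disjoint from $\mathcal{A}$ by the previous paragraph; hence $\overline{\mathcal{Z}}=\mathcal{Z}$, so $\pi$ is proper, and being affine of finite type it is finite. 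In particular each fibre $V^{\Theta}(k)$ is a finite, hence zero-dimensional, scheme (of length $(kh+1)^n$, since $\theta_1,\dots,\theta_n$ is then a regular sequence of forms of degree $kh+1$ and \cite[Proposition 2.11]{ARR} is dimension-preserving).

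Finally, since the base field has characteristic zero, a zero-dimensional scheme is reduced iff it is smooth, so for $\Theta\in\mathcal{A}$ the parking locus $V^{\Theta}(k)$ is reduced iff the Jacobian matrix $\bigl(\partial\theta_i/\partial x_j-\delta_{ij}\bigr)$ is invertible at every point of $V^{\Theta}(k)$. Therefore the bad set
\begin{equation*}
\mathcal{B}:=\{\Theta\in\mathcal{A}\,:\,V^{\Theta}(k)\text{ not reduced}\}=\pi\Bigl(\mathcal{Z}\cap\bigl\{\det\bigl(\partial\theta_i/\partial x_j-\delta_{ij}\bigr)=0\bigr\}\Bigr)
\end{equation*}
is the image under the proper morphism $\pi$ of a Zariski closed subset of $\mathcal{Z}$, hence is Zariski closed in $\mathcal{A}$; then $\mathcal{U}:=\mathcal{A}\setminus\mathcal{B}$ is a Zariski open subset of $\Hom_{\CC[W]}(V^*,\CC[V]_{kh+1})$ with $\mathcal{U}\subseteq\mathcal{R}$, and it is nonempty because Etingof's $\Theta_0$ lies in $\mathcal{A}$ with $V^{\Theta_0}(k)$ reduced (Theorem~\ref{etingof-theorem}). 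I expect the only real obstacle to be the properness in the middle paragraph: one must check that the universal parking locus does not run off to infinity over the h.s.o.p.\ locus, which is precisely where the condition $\Theta^{-1}(0)=\{0\}$ is used and which is what upgrades $\mathcal{B}$ from merely \emph{constructible} (the image of a closed set under an arbitrary morphism) to genuinely \emph{closed}; everything else is standard input about resultants, the Jacobian criterion, and the two deep facts cited above.
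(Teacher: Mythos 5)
Your proposal is correct and proves something strictly stronger than the paper does, by a genuinely different route. The paper works over the \emph{entire} ambient affine space $\AAA$ of (not necessarily $W$-equivariant) degree-$(kh+1)$ polynomial self-maps of $V$, applies Chevalley's theorem to the non-proper coordinate projection $\AAA \times V \twoheadrightarrow \AAA$, concludes only that the non-reduced locus is \emph{constructible}, and then must invoke a multidimensional Rouch\'e argument around Etingof's point $\Theta_0$ to show $\RRR$ contains a Euclidean ball and hence has nonempty Zariski interior (Lemma~\ref{constructible-interior}). You instead restrict first to the open h.s.o.p.\ locus $\mathcal{A}$, where the homogenization argument shows that the universal parking scheme $\mathcal{Z}\subset\mathcal{A}\times V$ does not meet the hyperplane at infinity in $\mathcal{A}\times\mathbb{P}^n$; this gives properness (indeed finiteness) of $\mathcal{Z}\to\mathcal{A}$, upgrades the Jacobian-singular locus from constructible to closed, and shows $\RRR$ is itself Zariski open, so one may take $\UUU=\RRR$. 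Your argument is purely algebro-geometric, avoids Theorem~\ref{rouche} entirely, and yields a cleaner statement; the paper's advantage is economy of technique, since Rouch\'e is needed anyway in the subsequent proof of Theorem~\ref{equivalence-intro}, so the paper reuses it here rather than introducing properness considerations. Two small remarks: the cited resultant criterion and the paper's regular-sequence criterion for Zariski openness of the h.s.o.p.\ locus are interchangeable; and your aside about $V^{\Theta}(k)$ having length $(kh+1)^n$ is true but not needed for the openness argument.
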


The proof of Theorem~\ref{slightly-stronger} will rely on Etingof's Theorem~\ref{etingof-theorem} and will be 
uniform.  By Theorem~\ref{slightly-stronger}, the Main Conjecture exhibits the chain of implications
\begin{center}
Strong $\Rightarrow$ Generic Strong $\Rightarrow$ Intermediate $\Rightarrow$ Weak,
\end{center}
as claimed in the introduction.  
After proving Theorem~\ref{slightly-stronger}, we will demonstrate that
\begin{center}
Generic Strong $\Leftrightarrow$ Intermediate; 
\end{center}
the proof of this equivalence relies on
Theorem~\ref{slightly-stronger}.

To prove Theorem~\ref{slightly-stronger}, we will use a number of basic analytical and topological results.
The first of these is a multidimensional version of Rouch\'e's Theorem from complex analysis.
Endow $\CC^N$ with its usual Euclidean norm
$|| \cdot ||: \CC^N \rightarrow \RR_{\geq 0}$ given by
$|| (a_1, \dots, a_N) || = \sqrt{ a_1 \overline{a_1} + \cdots + a_n \overline{a_N} }$.
For any $z \in \CC^N$ and $\epsilon > 0$, let 
$B(z, \epsilon) = \{z' \in \CC^N \,:\, ||z - z'|| < \epsilon \}$ be the open Euclidean ball of radius $\epsilon$ centered at $z$.

Let $F: \CC^N \rightarrow \CC^N$ be a holomorphic function.  A {\em zero} of $F$ is a point $z_0 \in \CC^N$ with
$F(z_0) = 0$.   A zero $z_0$ of $F$ is called {\em isolated} if there exists $\epsilon > 0$ such that 
$z_0$ is the only zero of $F$ contained in the ball $B(z_0, \epsilon)$.  Given an isolated zero $z_0$ of $F$, one can define 
the  multiplicity $m$ of $z_0$ using the Taylor expansion of $F$ around $z_0$.  An isolated zero $z_0$ of $F$ is called {\em simple}
if it has multiplicity one;  
this is equivalent to the condition that the Jacobian matrix of $F$ is nondegenerate at $z_0$.

In this paper, we will only consider the case where $F$ is a polynomial mapping and all of its zeros $z_0$ are simple.
As in the case $N = 1$, the multidimensional version of Rouch\'e's Theorem gives control over the number of zeros
contained in some bounded domain $\Omega$ under perturbations of $F$
which are ``small" on the boundary $\partial \Omega$.

\begin{theorem} (Multidimensional Rouch\'e Theorem)
\label{rouche}
Let $F, G: \CC^N \rightarrow \CC^N$ be holomorphic functions and let $\Omega \subset \CC^N$ be a bounded
domain
whose boundary $\partial \Omega$ is homeomorphic to a sphere.  Assume that $F$ has a finite number of zeros
on the domain $\Omega$ (which will automatically be isolated).  
Let $M$ be the number of these zeros, counted with multiplicity.
If the inequality $||F|| > ||G||$ holds on the boundary $\partial \Omega$, then
the function $F + G$ also has $M$ zeros on $\Omega$, counted with multiplicity.
\end{theorem}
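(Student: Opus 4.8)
The plan is to mimic the classical one-variable Rouch\'e argument: one expresses the number of zeros of a holomorphic map inside $\Omega$ as the topological degree of an associated map of spheres, and then appeals to homotopy invariance of the degree. Since $\|F\| > \|G\| \geq 0$ on $\partial\Omega$, the map $F$ is nonvanishing on $\partial\Omega$, so
\[
\widehat{F} := \frac{F}{\|F\|} \colon \partial\Omega \longrightarrow S^{2N-1}
\]
is a well-defined continuous map from the topological sphere $\partial\Omega$, oriented as the boundary of $\Omega \subset \CC^N = \RR^{2N}$, to the unit sphere, and hence carries a Brouwer degree $\deg(\widehat{F}) \in \ZZ$. (In the intended application one may take $\Omega$ to be a Euclidean ball $B(0,R)$, in which case $\partial\Omega$ is a round sphere and all orientation matters are transparent.)

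The first real step is to show that $\deg(\widehat{F}) = M$. Excising a small ball about each of the finitely many zeros of $F$ in $\Omega$ and using additivity of the degree, this reduces to the local claim that the degree of $v \mapsto F(v)/\|F(v)\|$ on a small sphere around an isolated zero $a$ of $F$ equals the algebraic multiplicity $\mult_a(F)$. In the generality needed in this paper every zero is simple, so the Jacobian of $F$ at $a$ is invertible, $F$ is a local biholomorphism near $a$, and this local degree is simply $+1 = \mult_a(F)$; hence $\deg(\widehat{F}) = M$.

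Next I would run the homotopy. For $t \in [0,1]$ set $F_t := F + tG$, so that $F_0 = F$ and $F_1 = F+G$. On $\partial\Omega$ the triangle inequality gives
\[
\|F_t\| \geq \|F\| - t\|G\| \geq \|F\| - \|G\| > 0,
\]
so each $F_t$ is nonvanishing on $\partial\Omega$ and $(z,t) \mapsto F_t(z)/\|F_t(z)\|$ is a continuous homotopy $\partial\Omega \times [0,1] \to S^{2N-1}$ from $\widehat{F}$ to $\widehat{F+G}$. By homotopy invariance of the Brouwer degree, $\deg(\widehat{F+G}) = \deg(\widehat{F}) = M$. To read this off for $F+G$, note that $F+G$ is nonvanishing on the compact set $\partial\Omega$, so its zero locus in $\overline{\Omega}$ is compact and contained in $\Omega$; a compact analytic subset of $\CC^N$ is necessarily finite (a positive-dimensional connected compact analytic subvariety of $\CC^N$ is ruled out by applying the maximum principle to the linear coordinates), so $F+G$ has finitely many, hence isolated, zeros in $\Omega$, and the first step applied to $F+G$ shows that their number counted with multiplicity equals $\deg(\widehat{F+G}) = M$.

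I expect the main obstacle to be the first step, namely the identification of the local topological index of a holomorphic map at an isolated zero with its algebraic multiplicity. For arbitrary multiplicities this requires genuine input --- for instance the Bochner--Martinelli logarithmic residue formula, or a deformation to a nearby map all of whose zeros are simple --- but in the case actually used in this paper all zeros are simple, so the local index is manifestly $+1$ and the argument is routine. Everything else is standard degree theory together with the triangle inequality supplied by the hypothesis $\|F\| > \|G\|$ on $\partial\Omega$.
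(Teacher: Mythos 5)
The paper does not supply a proof of this statement at all: Theorem~\ref{rouche} is presented as a known result from several-variable complex analysis, and the text immediately following merely remarks that in all applications $F$ and $G$ are polynomial, $\Omega$ is a ball, and $M=1$. So there is no ``paper's proof'' to compare against.

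That said, your degree-theoretic sketch is the standard argument and is essentially correct. The three ingredients --- that $\widehat{F}=F/\|F\|$ has a well-defined Brouwer degree on $\partial\Omega$, that the homotopy $F_t=F+tG$ stays nonvanishing on $\partial\Omega$ by the triangle inequality, and that a compact analytic subset of $\CC^N$ is finite --- are all used correctly, and your observation that a nondegenerate zero of a holomorphic map has local index $+1$ (holomorphic maps are orientation-preserving) correctly settles the simple-zero case, which is the only case the paper invokes. You are also right to flag the one genuine gap in your proposal as written: the identification of local topological index with algebraic multiplicity at a zero of multiplicity greater than one does require additional input (e.g.\ the Bochner--Martinelli integral formula, or a small perturbation splitting a multiple zero into simple ones and then applying Rouch\'e once more). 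For the theorem as stated in full generality one must fill this in; for the paper's purposes your argument is already complete.
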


In all of our applications of Theorem~\ref{rouche}, the functions $F$ and $G$ will be polynomial,
the domain $\Omega$ will be a ball, and we will have the zero count $M = 1$.

To prove Theorem~\ref{slightly-stronger} and the equivalence of the
Generalized Strong and Intermediate Conjectures, 
we will need to consider the relationship between the Euclidean
and Zariski topologies on the set $\CC^N$.  
The first tool we use in this regard is well known.

%\begin{lemma}
%\label{path-connected}
%Every Zariski open subset $\UUU$ of $\CC^N$ is path connected in its Euclidean topology.
%\end{lemma}

%As a minor extension of Lemma~\ref{path-connected}, we have the following well known result (see \cite{CLO}).

\begin{lemma}
\label{stronger-path-connected}
Let $\UUU \subseteq \CC^N$ be a nonempty Zariski open set and suppose $\mathcal{X} \subset \CC^N$
satisfies $\UUU \subseteq \mathcal{X} \subseteq \CC^N$.  Then $\mathcal{X}$ is path connected.
\end{lemma}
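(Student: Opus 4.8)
The plan is to reduce path-connectedness of $\mathcal{X}$ to a statement about the Zariski-open set $\UUU$ and then connect arbitrary points of $\mathcal{X}$ to points of $\UUU$. First I would recall the standard fact that a nonempty Zariski-open subset $\UUU$ of $\CC^N$ is itself path connected (indeed connected) in the Euclidean topology: its complement is the zero locus $Z(I)$ of a nonzero ideal $I$, and since $\CC$ is infinite this complement is a proper Zariski-closed set, hence has empty interior and, more importantly, cannot disconnect $\CC^N$. Concretely, given two points $p, q \in \UUU$, the complex line $L$ through $p$ and $q$ meets the complement $\CC^N \setminus \UUU$ in the zero set of a univariate polynomial restricted to $L$ — a finite set unless $L \subseteq \CC^N\setminus\UUU$, which is impossible since $p,q\in L\cap\UUU$. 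So $L \cap \UUU$ is the complement of finitely many points in a copy of $\CC$, which is path connected, giving a path from $p$ to $q$ inside $\UUU$.

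Next I would handle a general point $x \in \mathcal{X}$. Fix once and for all a basepoint $p_0 \in \UUU$ (nonempty by hypothesis). Consider the complex line $L_x$ through $x$ and $p_0$; parametrize it as $t \mapsto p_0 + t(x - p_0)$ for $t \in \CC$. The set of $t$ for which $p_0 + t(x-p_0) \notin \UUU$ is the zero set of a polynomial in $t$; since $t = 0$ gives $p_0 \in \UUU$, this polynomial is not identically zero, so its zero set is finite. Hence $L_x \cap \UUU$ is $L_x$ minus finitely many points, and in particular $L_x \setminus \{x\}$ meets $\UUU$ in a cofinite subset of a line. Choosing a path in $\CC \setminus \{\text{finite set}\}$ from $t=1$ (giving $x$) to some $t_1$ with $p_0 + t_1(x - p_0) \in \UUU$, and noting the only point of this path possibly outside $\UUU$ is the endpoint $x$ itself — which lies in $\mathcal{X}$ — we obtain a path in $\mathcal{X}$ from $x$ to a point of $\UUU$.

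Finally I would assemble these pieces: given any $x, y \in \mathcal{X}$, concatenate the path from $x$ to a point $x' \in \UUU$ (staying in $\mathcal{X}$), a path from $x'$ to $y'$ inside $\UUU \subseteq \mathcal{X}$ for some $y' \in \UUU$ reached analogously from $y$, and the reverse of the path from $y$ to $y'$. This exhibits $\mathcal{X}$ as path connected. I do not anticipate a serious obstacle here; the only point requiring a little care is ensuring that when we travel along the line $L_x$ we avoid the finitely many ``bad'' parameter values except possibly at the endpoint $x$, which is legitimate precisely because $x \in \mathcal{X}$ even though $x$ need not lie in $\UUU$. This is exactly why the hypothesis $\UUU \subseteq \mathcal{X}$ (rather than $\mathcal{X}$ open) suffices.
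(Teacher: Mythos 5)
Your proof is correct, and it follows essentially the same line-slicing strategy the paper uses: restrict to a complex line through the given point, observe that the line meets the complement of $\UUU$ in only finitely many points, and invoke path-connectedness of $\CC$ minus a finite set. The one small stylistic difference is in how the line is chosen. The paper fixes the point $p \in \CC^N \setminus \UUU$ first and then argues that \emph{some} direction $v'$ makes $L_{v'} \cap (\CC^N \setminus \UUU)$ finite (because if every direction gave an infinite intersection, the complement would be all of $\CC^N$), after which a suitable scaling $\alpha v'$ produces a straight segment into $\UUU$. You instead fix a basepoint $p_0 \in \UUU$ and take the line through $x$ and $p_0$, so finiteness of the bad set on that line is immediate since the line already contains a point of $\UUU$. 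Your variant removes the need for an existence argument for the direction and is marginally cleaner, but it buys nothing substantively new; both are the same elementary complex-line argument, and both correctly exploit the hypothesis $\UUU \subseteq \mathcal{X}$ only to permit the single endpoint $x$ of the path to lie outside $\UUU$.
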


\begin{proof}
As a Zariski open subset of $\CC^N$, we know that $\UUU$ is path connected.  Let $\VVV := \CC^N - \UUU$ be the complement
of $\UUU$ in $\CC^N$, so that $\VVV$ is a proper subvariety of $\CC^N$.
It is enough to show that for any point $p \in \VVV$, there exists a path $\gamma: [0, 1] \rightarrow \CC^N$ such that
$\gamma(0) = p$ and $\gamma(t) \in \UUU$ for $0 < t \leq 1$.  

We claim that we can take $\gamma$ to be a linear path $\gamma(t) = p + v_0t$ for some $v_0 \in \CC^N - \{0\}$.  To see this,
let $L_v := \{ p + v z \,:\, z \in \CC \}$ for $v \in \CC^N - \{0\}$.  Then $L_v$ is a copy of $\CC$ and
$\VVV \cap L_v$ is a subvariety of $L_v$.  It follows that $\VVV \cap L_v$ is finite or $\VVV \subseteq L_v$ for any
$v \in \CC^N - \{0\}$.  Since $\VVV$ is a proper subvariety of $\CC^N$, we can find $v' \in \CC^N - \{0\}$ such that
$\VVV \cap L_{v'}$ is finite.  This means that there exists a nonzero complex number $\alpha$ such that
$p + \alpha v' t \in \UUU$ for all real numbers $0 < t \leq 1$.  Taking $v_0 := \alpha v'$, we get our desired path $\gamma$.
\end{proof}

Recall that a subset $\CCC \subseteq \CC^N$ is called {\em constructible}  if there exist varieties
$\VVV_1, \dots, \VVV_m, \WWW_1, \dots, \WWW_n \subseteq \CC^N$ such that
\begin{equation}
\CCC = \bigcup_{i = 1}^m (\VVV_i - \WWW_i).
\end{equation}
Equivalently, a subset $\CCC \subseteq \CC^N$ is constructible if it is locally closed in the Zariski topology.
We will need to consider images of varieties under the standard projection map $\pi: \CC^{N + n} \twoheadrightarrow \CC^N$.
While such images are not varieties in general, we have the following result (see, for example, \cite{CLO}).

\begin{lemma}
\label{constructible-image}
Let $\pi: \CC^{N+n} \twoheadrightarrow \CC^N$ be the projection map obtained by forgetting the last $n$ coordinates.
If $\VVV \subseteq \CC^{N+n}$ is a variety, then $\pi(\VVV) \subseteq \CC^N$ is a constructible set.
\end{lemma}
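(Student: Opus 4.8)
The plan is to recognize this as Chevalley's theorem on the constructibility of images and to prove it by the elimination-theoretic argument via resultants. First I would reduce to the case $n = 1$. Writing $\pi$ as a composite of single-coordinate projections $\CC^{N+n} \twoheadrightarrow \CC^{N+n-1} \twoheadrightarrow \cdots \twoheadrightarrow \CC^N$, it is enough to show that the image of a \emph{constructible} subset of $\CC^{M+1}$ under the projection $\CC^{M+1}\twoheadrightarrow\CC^M$ forgetting the last coordinate is again constructible: a variety is in particular constructible, and constructibility would then propagate down the tower. So let $\pi: \CC^{N+1}\twoheadrightarrow\CC^N$ forget the last coordinate $y$, and let $\CCC\subseteq\CC^{N+1}$ be constructible.

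Since $\pi$ commutes with finite unions and every constructible subset of $\CC^{N+1}$ is a finite union of sets of the form $\{(a,y) : g_1(a,y) = \cdots = g_r(a,y) = 0,\ h(a,y) \neq 0\}$ for suitable polynomials $g_1,\dots,g_r,h$ (absorbing several inequations into one via a product), I may assume $\CCC$ has this form. Then a point $a\in\CC^N$ lies in $\pi(\CCC)$ exactly when the univariate polynomials $g_1(a,y),\dots,g_r(a,y)$ admit a common root $y_0$ with $h(a,y_0)\neq 0$. The task is to make this last condition polynomial in $a$.

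The key step is a stratification of $\CC^N$ into finitely many constructible pieces according to which leading coefficients, with respect to $y$, of $g_1,\dots,g_r,h$ vanish at $a$; on each piece the $y$-degrees of $g_1(a,\cdot),\dots,g_r(a,\cdot),h(a,\cdot)$ are constant. On the stratum where every $g_j(a,\cdot)$ vanishes identically, $a\in\pi(\CCC)$ iff $h(a,\cdot)\not\equiv 0$ (since $\CC$ is infinite), an open condition. On a stratum where some $g_j(a,\cdot)$ has positive degree, let $G_a := \gcd_y(g_1(a,\cdot),\dots,g_r(a,\cdot))$; once the $y$-degrees are pinned down, $\deg_y G_a$ is governed by the vanishing and nonvanishing of subresultant coefficients of the $g_j$, which are themselves polynomials in $a$, and the coefficients of $G_a$ are likewise given by subresultants. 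The common roots of the $g_j(a,\cdot)$ are precisely the roots of $G_a$, and such a root avoiding the zeros of $h(a,\cdot)$ exists iff $\deg_y\gcd(G_a, h(a,\cdot)^{\deg_y G_a}) < \deg_y G_a$, again a condition on subresultant coefficients. Intersecting these conditions with the defining conditions of the stratum exhibits $\pi(\CCC)\cap(\text{stratum})$ as constructible, and the union over the finitely many strata proves the lemma.

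I expect the main obstacle to be the bookkeeping in this last step: organizing the stratification so that all gcd-degree and ``root avoiding $h$'' conditions become honestly polynomial in $a$, and verifying the subresultant identities that let the coefficients of $G_a$ depend algebraically on $a$ once the $y$-degrees are fixed. None of this is deep — it is exactly the content of classical elimination theory — but it requires care. An equally standard alternative would replace the resultant computations by an induction on $\dim\VVV$, using that a dominant morphism of irreducible varieties has image containing a dense Zariski-open subset of the target; I would fall back on this route if the resultant bookkeeping became unwieldy.
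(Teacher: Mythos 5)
Your proof is correct, but the paper itself does not actually prove this lemma --- it states it with only a pointer to the reference~\cite{CLO}, treating it as the standard fact (Chevalley's theorem on constructibility of images) that it is.  Your elimination-theoretic argument via stratification and subresultants is exactly the classical route that \cite{CLO} develops, so in spirit you have supplied the omitted reference's proof rather than a competing one.  Two small points to keep straight when you flesh out the bookkeeping: when $r>2$ the gcd $G_a$ is built by iterating binary gcds, and you must refine the stratification at each stage so that the degrees of the intermediate gcds are also locally constant; and once the $y$-degrees are pinned, the coefficients of $G_a$ are a priori only \emph{rational} functions of $a$ (quotients of subresultant polynomials by the leading coefficient, which is a unit on the stratum), not polynomials, but this causes no trouble because every degree or divisibility condition you impose is equivalent, after clearing that unit denominator, to the vanishing or nonvanishing of a genuine polynomial on the locally closed stratum.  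With those caveats, the plan goes through; the alternative dimension-induction argument you mention at the end is also standard and would serve equally well.
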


It will be crucial for us to show that a certain constructible set $\CCC$ has nonempty Zariski interior.  
To do this, we will use the following fact.

\begin{lemma}
\label{constructible-interior}
Let $\CCC \subseteq \CC^N$ be a constructible set and let $\UUU$ be the Zariski interior of $\CCC$.
Suppose there is a point $p \in \CCC$ and a real number $\delta > 0$ such that the open ball
$B_{\CC^N}(p, \delta)$ satisfies $B_{\CC^N}(p, \delta) \subseteq \CCC$.  Then $\UUU$ is nonempty and contains $p$.
\end{lemma}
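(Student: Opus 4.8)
The plan is to compute the Zariski interior $\UUU$ of $\CCC$ essentially explicitly and then use the Euclidean ball to pin the point $p$ inside it. Set $\mathcal{D} := \CC^N - \CCC$. Since the complement of a constructible set is constructible, $\mathcal{D}$ is constructible; let $Z := \overline{\mathcal{D}}$ be its Zariski closure. I claim $\UUU \supseteq \CC^N - Z$: indeed $\CC^N - Z$ is Zariski open, and since $\mathcal{D} \subseteq Z$ we have $\CC^N - Z \subseteq \CC^N - \mathcal{D} = \CCC$, so $\CC^N - Z$ is a Zariski open subset of $\CCC$, hence contained in $\UUU$ by definition of the Zariski interior. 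Therefore it suffices to prove that $p \notin Z$; this will show at once that $\UUU$ is nonempty and that $p \in \UUU$.

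So suppose for contradiction that $p \in Z$. Write $\mathcal{D} = \bigcup_{i=1}^m (\VVV_i - \WWW_i)$ with $\VVV_i, \WWW_i \subseteq \CC^N$ varieties, and refine this presentation by decomposing each $\VVV_i$ into irreducible components and discarding empty pieces, so that we may assume every $\VVV_i$ is irreducible and $\WWW_i \subsetneq \VVV_i$ is proper; then $\VVV_i - \WWW_i$ is a nonempty open subset of the irreducible space $\VVV_i$, hence Zariski dense in $\VVV_i$, and consequently $Z = \overline{\mathcal{D}} = \bigcup_i \VVV_i$. Since $p \in Z$, there is an index $i_0$ with $p \in \VVV_{i_0}$. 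Now I invoke the standard fact that a Zariski dense subset of an irreducible complex variety is dense in its Euclidean topology; applying it to $\VVV_{i_0} - \WWW_{i_0} \subseteq \VVV_{i_0}$ and using $p \in \VVV_{i_0}$, we conclude that $p$ lies in the Euclidean closure of $\VVV_{i_0} - \WWW_{i_0}$, so the open ball $B_{\CC^N}(p, \delta)$ meets $\VVV_{i_0} - \WWW_{i_0}$. Choose a point $q$ in this intersection. Then $q \in \VVV_{i_0} - \WWW_{i_0} \subseteq \mathcal{D} = \CC^N - \CCC$, while simultaneously $q \in B_{\CC^N}(p, \delta) \subseteq \CCC$ by hypothesis, a contradiction. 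Hence $p \notin Z$ and the lemma follows.

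The only input that is not pure bookkeeping with the definitions of constructibility and of the Zariski interior is the topological comparison used at the end: a Zariski dense subset of an irreducible complex variety is Euclidean dense in it (equivalently, a proper closed subvariety has empty Euclidean interior in the ambient variety, together with irreducibility). I expect deciding how to present this to be the main point — one can cite it, deduce it from density and connectedness of the smooth locus, or sidestep it entirely with a Baire category argument applied to the cover $B_{\CC^N}(p,\delta) = \bigcup_i \bigl( B_{\CC^N}(p,\delta) \cap (\VVV_i - \WWW_i) \bigr)$, observing that a non-meager piece forces the corresponding $\VVV_i$ to have nonempty Euclidean interior, hence to equal $\CC^N$; I would use whichever is cleanest given the rest of Section~\ref{The Generic Strong Conjecture}.
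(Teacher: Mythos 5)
Your proof is correct and rests on the same key topological fact the paper uses, namely that a dense Zariski open subset of an irreducible complex variety is Euclidean dense (equivalently, a proper subvariety of $\CC^N$ contains no Euclidean ball). The paper's own argument is terser: it observes that if $\UUU = \emptyset$ then $\CCC$ sits inside a proper subvariety, which cannot contain $B_{\CC^N}(p,\delta)$, and concludes directly. Your version, which passes to the complement $\mathcal D = \CC^N - \CCC$, identifies $\UUU$ with $\CC^N - \overline{\mathcal D}$, and decomposes $\mathcal D$ by irreducible components, is a bit longer but actually more careful: the paper's one-line argument really only justifies $\UUU \neq \emptyset$, while your decomposition also cleanly delivers the assertion $p \in \UUU$, which is the part the paper elides.
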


\begin{proof}
Since $\CCC$ is constructible, if $\UUU = \emptyset$ then $\CCC$ would be contained in a proper subvariety of
$\CC^N$.  But no proper subvariety of $\CC^N$ contains a nonempty Euclidean ball, so $\UUU$ is nonempty and
$p \in \UUU$.
\end{proof}

We have assembled all the pieces we need to prove Theorem~\ref{slightly-stronger}.

\begin{proof}  (of Theorem~\ref{slightly-stronger})
We start by enlarging our ambient space to consider 
all homogeneous degree $kh+1$ polynomial maps $\Theta: V \longrightarrow V$, whether or not they are $W$-equivariant. 

For the remainder of this proof, fix a choice of ordered basis $x_1, \dots, x_n$ of the dual space $V^*$ of
the reflection representation.  Let $\AAA$ denote the product
\begin{equation}
\AAA = \overbrace{\CC[x_1,  \dots, x_n]_{kh+1} \times \cdots \times \CC[x_1, \dots, x_n]_{kh+1}}^{n} =
\overbrace{\CC[V]_{kh+1} \times \cdots \times \CC[V]_{kh+1}}^{n}.
\end{equation}
Counting monomials, we get that  $\AAA$ is a copy of the affine complex space $\CC^N$, where 
$N = {kh+n \choose kh+1}^n$.   For every point $\Theta = (\theta_1, \dots, \theta_n) \in \AAA$, we get an associated
polynomial mapping $\Theta: V \longrightarrow V$ which sends a point with coordinates $(x_1, \dots, x_n)$
to the point with coordinates $(\theta_1, \dots, \theta_n)$.  This identifies $\AAA$ with the collection of homogeneous
polynomial maps $V \longrightarrow V$ of degree $kh+1$.

We claim that there is a nonempty Zariski open subset $\VVV$ of $\AAA$ such that, for every mapping 
$\Theta: V \longrightarrow V$ in $\VVV$, we have $\Theta^{-1}(0) = \{0\}$.
To see this, let $\Theta = (\theta_1,  \dots, \theta_n) \in \AAA$.
It is well known that $\Theta^{-1}(0) = \{0\}$  if the sequence 
$\theta_1,  \dots, \theta_n \in \CC[V]_{kh+1}$ is a regular sequence in the polynomial ring $\CC[V]$.
So it is enough to show that there exists a nonempty Zariski open subset $\VVV \subset \AAA$
such that for all $\Theta = (\theta_1,  \dots, \theta_n) \in \mathcal{V}$, the sequence
$\theta_1, \dots, \theta_n$ is regular.
This is a well known fact in algebra; see for example \cite[p. 48]{Schenck}.

Given any $\Theta = (\theta_1,  \dots, \theta_n) \in \AAA$, let $V^{\Theta}(k)$ be the subscheme 
of $V$ cut out by the ideal 
$(\Theta - \xx) := (\theta_1 - x_1, \dots, \theta_n - x_n)$.  
This is the same definition as before, but we are 
no longer assuming that  the linear map $x_i \mapsto \theta_i$ is  $W$-equivariant. 

Given $\Theta \in \VVV$,
the reducedness of $V^{\Theta}(k)$ can be detected by a Jacobian condition.
Let $\Mat_n(\CC)$ denote the space of $n \times n$ complex matrices.
Given any point $v \in V$ with coordinates $(x_1(v), \dots, x_n(v)) = (v_1, \dots, v_n) \in \CC^n$, let 
$J(\Theta)_v 
\in \Mat_n(\CC)$ 
denote the Jacobian matrix 
$ \left( \frac{\partial \theta_i}{\partial x_j}  \right)_{1 \leq i, j \leq n}$
of the polynomial map 
$\Theta = (\theta_1, \dots, \theta_n): V \longrightarrow V$ evaluated at 
the point $(x_1, \dots, x_n) = (v_1, \dots, v_n)$.  The Jacobian criterion for reducedness is as follows.
\begin{center}
For $\Theta \in \VVV$, the scheme $V^{\Theta}(k)$ fails to be reduced if and only if
there is some point $v \in V$ such that $\Theta(v) = v$ and the matrix $J(\Theta)_v$ has $1$ as an eigenvalue.
\end{center}
Equivalently, if $I_n \in \Mat_n(\CC)$ denotes the $n \times n$ identity matrix, then $V^{\Theta}(k)$ fails to be reduced 
if any only if there is some point $v \in V$ such that 
$\Theta(v) - v = 0$ and the matrix $J(\Theta)_v - I_n$ is singular.

The reasoning of the last paragraph leads us to consider the diagram of maps
\begin{equation}
\AAA  \xleftarrow{\hspace{0.1in}\pi\hspace{0.1in}} \AAA \times V 
\xrightarrow{\hspace{0.1in}\varphi\hspace{0.1in}} \Mat_n(\CC) \times V,
\end{equation}
where the map on the left is projection $\pi: \AAA \times V  \twoheadrightarrow \AAA$ onto the first factor
and the map on the right is
$\varphi: (\Theta, v) \mapsto (J(\Theta)_v - I_n, \Theta(v) - v)$.  
Both $\pi$ and $\varphi$ are morphisms of affine complex varieties.
Consider the subvariety $\mathcal{Z} \subset \Mat_n(\CC) \times V$ given by
\begin{equation}
\mathcal{Z} = \{A \in \Mat_n(\CC) \,:\, \det(A) = 0\} \times \{0\}.
\end{equation}
The Jacobian criterion for reducedness translates to say:
\begin{center}
Given $\Theta \in \VVV \subset \AAA$, the scheme
$V^{\Theta}(k)$ fails to be reduced if and only if $\Theta \in \pi(\varphi^{-1}(\mathcal{Z}))$.
\end{center}
We have that $\varphi^{-1}(\mathcal{Z})$ is a subvariety of the product space
$\AAA \times V$.   
Since $\pi$ is  projection $\CC^{N + n} \twoheadrightarrow \CC^N$, 
the ``pathological set"
$\pi(\varphi^{-1}(\mathcal{Z}))$ is a constructible subset of $\AAA$ by Lemma~\ref{constructible-image}.

At this point in the proof, we turn our attention to $W$-equivariant maps.  In order to do this, define
a subset $\AAA^W \subset \AAA$ by
\begin{equation}
\AAA^W := \{ (\theta_1, \dots, \theta_n) \in \AAA \,:\, \text{the linear map $x_i \mapsto \theta_i$ is $W$-equivariant} \}.
\end{equation}
Then $\AAA^W$ is a linear subvariety of $\AAA$.
We may identify $\AAA^W$ with $\Hom_{\CC[W]}(V^*, \CC[V]_{kh+1})$, so we may embed
$\RRR \subset \AAA^W$.

It is our aim to show that the subset $\RRR \subset \AAA^W$ has nonempty Zariski interior.
We know that $\RRR$ may be expressed
as 
\begin{equation}
\RRR = (\VVV- \pi(\varphi^{-1}(\ZZZ))) \cap \AAA^W.
\end{equation}
Since $\pi(\varphi^{-1}(\ZZZ))$ is a constructible subset of $\AAA$, we have that
$\RRR$ is a constructible subset of $\AAA^W$.  By Etingof's Theorem~\ref{etingof-theorem},
we know that $\RRR$ is nonempty; choose $\Theta_0 \in \RRR$.
Let $\UUU \subseteq \RRR$ be the Zariski interior of $\RRR$ within $\AAA^W$.
We use Lemma~\ref{constructible-interior} to argue that $\UUU$ is nonempty as follows.

Equip the affine space $\AAA$ with its standard Euclidean metric.  
Then $\AAA^W$ inherits this metric from $\AAA$.
By Lemma~\ref{constructible-interior}, to show that $\UUU \neq \emptyset$ if suffices to show

\begin{center}
there exists $\delta > 0$ such that for all $\Theta = (\theta_1, \dots, \theta_n) \in B_{\AAA^W}(\Theta_0, \delta)$,
the system of equations $\theta_1 - x_1 = \cdots =  \theta_n - x_n = 0$ has precisely $(kh+1)^n$
 solutions in $V = \CC^n$, all of them simple.
\end{center}
This is a purely analytical statement and can be seen from  Theorem~\ref{rouche}.  
Let us temporarily identify $V$ with $\CC^n$.
For any $\Theta \in \AAA^W$,
introduce the function $F_{\Theta}: \CC^n \rightarrow \CC^n$ whose 
coordinates are given by $F_{\Theta} = (\theta_1 - x_1, \dots, \theta_n - x_n)$.
With this notation,
 the holomorphic function $F_{\Theta_0}$ has $(kh+1)^n$ simple zeros  in $\CC^n$.
Let $\epsilon > 0$ denote the minimum distance  between any pair of these zeros.
Let $K \subset \CC^n$ denote the compact set 
\begin{equation}
K = \bigcup_{v} \left\{z \in \CC^n \,:\, ||z - v|| = \frac{\epsilon}{100} \right\},
\end{equation}
where the union is over all $(kh+1)^n$ solutions $v = (v_1, \dots, v_n) \in \CC^n$ to
$F_{\Theta_0}(v) = 0$.  By compactness and the fact that $F_{\Theta_0}$ is nonvanishing on $K$,
there exists $m > 0$ such that $|| F_{\Theta_0}(k) || > m$ for all $k \in K$.
Again by compactness, there exists $\delta > 0$ such that for all
$\Theta \in \AAA^W$ with $\Theta \in B_{\AAA^W}(\Theta_0, \delta)$, we have that 
\begin{equation}
\sup \left\{  ||F_{\Theta}(z) - F_{\Theta_0}(z)|| \,:\, z \in K \right\} < \frac{m}{100}.
\end{equation}
Let $\Theta \in B_{\AAA^W}(\Theta_0, \delta)$.  Let $v \in \CC^n$ be a zero of
$F_{\Theta_0}$.  Then just one zero of $F_{\Theta_0}$ lies in the ball
$||z - v|| \leq \frac{\epsilon}{100}$, and that zero is simple. 
Since $||F_{\Theta} - F_{\Theta_0}|| < \frac{m}{100} < || F_{\Theta_0} ||$ on the boundary of this ball,
the  Theorem~\ref{rouche} tells us that
$F_{\Theta}$ has the same number of zeros (counted with multiplicity) in this ball as 
$F_{\Theta_0}$.  We conclude that $F_{\Theta}$ has exactly one zero in this ball, and that zero is simple.
Since our choice of zero $v$ was arbitrary, we get 
 that $F_{\Theta}$ has at least $(kh+1)^n$ simple zeros in $\CC^n$.
By B\'ezout's Theorem (or another application of Theorem~\ref{rouche}), 
we know that $F_{\Theta}$ has precisely $(kh+1)^n$ zeros in $\CC^n$,
all of them simple.

The last paragraph shows that the constructible set $\RRR$ contains a Euclidean open ball centered at
$\Theta_0$, and so must have nonempty Zariski interior $\UUU$ by Lemma~\ref{constructible-interior}.  This completes the proof
of Theorem~\ref{slightly-stronger}.
\end{proof}

\begin{figure}
\centering
\includegraphics[scale = 0.3]{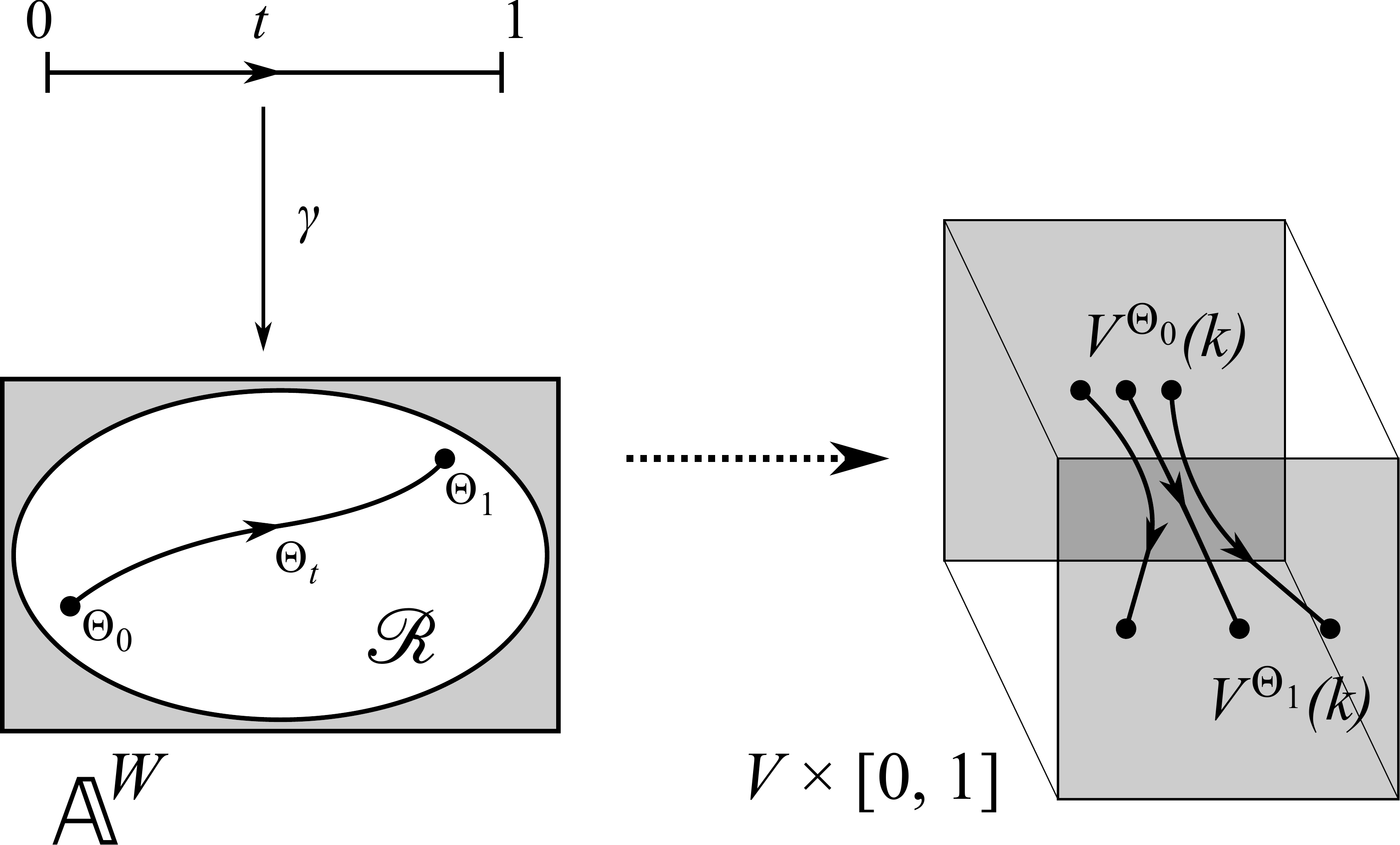}
\caption{The proof of Theorem~\ref{equivalence-intro}.}
\label{fig:geometry}
\end{figure}

Our final task is to prove Theorem~\ref{equivalence-intro}.

\vspace{0.1in}

\noindent
{\bf Theorem 1.3.}
{\em  The Intermediate and Generalized Strong Conjectures are equivalent.}

\begin{proof}
(of Theorem~\ref{equivalence-intro})
In light of Etingof's Theorem~\ref{etingof-theorem}, the Generalized Strong Conjecture certainly implies the Intermediate 
Conjecture.  Let us assume that the Intermediate Conjecture is true and derive the Generalized Strong Conjecture.

Fix an h.s.o.p. $\Theta_0 \in \RRR$ of degree $kh+1$ carrying $V^*$ such that the parking locus $V^{\Theta_0}(k)$
is reduced and we have a $W \times \ZZ_{kh}$-equivariant bijection $V^{\Theta_0}(k) \cong_{W \times \ZZ_{kh}} \Park^{NC}_W(k)$.
Let $\Theta_1 \in \RRR$ be another h.s.o.p. of degree $kh+1$ carrying $V^*$ such that $V^{\Theta_1}(k)$ is reduced.
It is enough to show that we have a $W \times \ZZ_{kh}$-equivariant bijection 
$V^{\Theta_1}(k) \cong_{W \times \ZZ_{kh}} \Park^{NC}_W(k)$.

By Theorem~\ref{slightly-stronger}, the subset $\RRR \subset \Hom_{\CC[W]}(V^*, \CC[V]_{kh+1})$ has  nonempty Zariski 
interior $\UUU$.  By Lemma~\ref{stronger-path-connected}, this means that $\RRR$ is path connected 
in its Euclidean topology.  Let $\gamma: [0, 1] \rightarrow \RRR$ be a path $\gamma: t \mapsto \Theta_t$ 
from $\Theta_0$ to $\Theta_1$ in $\RRR$.  For all real numbers $0 \leq t \leq 1$, we get an associated 
parking locus $V^{\Theta_t}(k) \subset V$ consisting of $(kh+1)^n$ distinct points.

Let us consider the action of the group $W \times \ZZ_{kh}$ on $V$. For any group element
$(w, g^d) \in W \times \ZZ_{kh}$, the function $V \rightarrow V$ given by 
$v \mapsto (w, g^d).v$ is continuous.  Moreover, for all $0 \leq t \leq 1$, the $(kh+1)^n$ element set 
$V^{\Theta_t}(k) \subset V$ is closed under the action of $W \times \ZZ_{kh}$.  The idea is to follow the action of a fixed 
group element $(w, g^d)$ along the path $\gamma$.

We claim that there exist unique (continuous)
paths $\alpha_v : [0, 1] \rightarrow V$ for $v \in V^{\Theta_0}(k)$ such that
$\alpha_v(0) = v$ and $\alpha_v(t) \in V^{\Theta_t}(k)$ for all $0 \leq t \leq 1$.
To see this, consider the locus $V^{\Theta_t}(k)$ for some fixed $0 \leq t \leq 1$.  Let $\epsilon_t$ be the minimum 
distance between any pair of points in this $(kh+1)^n$-element set.  By Theorem~\ref{rouche},
there exists an open interval $I_t$ in $[0, 1]$ containing $t$ such that for all $t' \in I_t$ and $v^{(t)} \in V^{\Theta_t}(k)$,
there is a unique point $v^{(t')} \in V^{\Theta_t'}(k) \cap B_V(v^{(t)}, \frac{\epsilon_t}{100})$.
For any point $v^{(t)} \in V^{\Theta_t}(k)$, we therefore get a well defined function
$\alpha^{(t)}_{v^{(t)}}: I_t \rightarrow V$ given by $t' \mapsto v^{(t')}$.
Theorem~\ref{rouche} also shows that $\alpha^{(t)}_{v^{(t)}}$ is continuous for all
$v^{(t)} \in V^{\Theta_t}(k)$, and our assumption on $I_t$ guarantees that 
the set of functions
$\{ \alpha^{(t)}_{v^{(t)}}: I_t \rightarrow V \,:\, v^{(t)} \in V^{\Theta_t}(k) \}$ is the unique collection of continuous functions
$I_t \rightarrow V$
such that $\alpha^{(t)}_{v^{(t)}}(t') \in V^{\Theta_{t'}}(k)$ and
$\alpha^{(t)}_{v^{(t)}}(t) = v^{(t)}$ for all $v^{(t)} \in V^{\Theta_t}(k)$ and $t' \in I_t$.
By compactness, there are finitely many $I_{t_1}, I_{t_2}, \dots, I_{t_m}$ of these open intervals which cover $[0, 1]$.
There exists a unique collection of functions $\{ \alpha_v: [0,1] \rightarrow V \,:\, v \in V^{\Theta_0}(k) \}$ such that
$\alpha_v(0) = v$ and $\alpha_v \mid_{I_{t_r}} = \alpha_{v^{(t_r)}}$ for some $v^{(t)} \in V^{\Theta_{t_r}}(k)$ and all $1 \leq r \leq m$.  
The functions
$\alpha_v$ are continuous.

The above reasoning implies that $V^{\Theta_t}(k) = \{\alpha_v(t) \,:\, v \in V^{\Theta_0}(k) \}$ for 
all $0 \leq t \leq 1$.  By compactness, there exists $\epsilon > 0$ such that 
$|| \alpha_v(t) - \alpha_{v'}(t) || > \epsilon$ for all $v \neq v'$ and all $0 \leq t \leq 1$.
Let $(w, g^d) \in W \times \ZZ_{kh}$ and suppose $(w, g^d).v = v'$ for $v, v' \in V^{\Theta_0}(k)$.
The continuity of the action of $(w, g^d)$ on $V$ 
and the continuity of the $\alpha$ paths 
means that $(w, g^d).\alpha_v(t) = \alpha_{v'}(t)$ for all $0 \leq t \leq 1$.
Therefore, the map $\alpha_v(0) \mapsto \alpha_v(1)$ gives the desired $W \times \ZZ_{kh}$-equivariant bijection
$V^{\Theta_0}(k) \rightarrow V^{\Theta_1}(k)$.
\end{proof}

 The  geometric intuition behind the previous proof
 is shown in Figure~\ref{fig:geometry}.  Suppose we are given two h.s.o.p.'s $\Theta_0$ and $\Theta_1$ of degree $kh+1$
 carrying $V^*$ such that the loci $V^{\Theta_0}(k)$ and $V^{\Theta_1}(k)$ are both reduced.  
 We think of $\Theta_0$ and $\Theta_1$ as points in the affine space
 $\AAA^W = \Hom_{\CC[W]}(V^*, \CC[V]_{kh+1})$.  We identify the parameter space of all h.s.o.p.'s $\Theta$ of degree $kh+1$
 such that $V^{\Theta}(k)$ is reduced with $\RRR \subset \AAA^W$, so that 
$ \Theta_0, \Theta_1 \in \RRR \subset \AAA^W$.

We want to show that the parking loci $V^{\Theta_0}(k)$ and $V^{\Theta_1}(k)$ have the same $W \times \ZZ_{kh}$-set structure.
To do this, we start by connecting the h.s.o.p.'s $\Theta_0$ and $\Theta_1$ with a path
$\gamma: [0, 1] \rightarrow \AAA^W$ whose image lies entirely within the parameter space $\RRR$ of reduced h.s.o.p.'s.  By 
Theorem~\ref{slightly-stronger} and Lemma~\ref{stronger-path-connected}, the  space
$\RRR$ is path connected so that this can be accomplished.

For every real number $0 \leq t \leq 1$, the path $\gamma$ gives us a h.s.o.p. $\gamma(t) = \Theta_t$ of 
degree $kh+1$ carrying $V^*$ with the property that $V^{\Theta_t}(k)$ is reduced.  For any value of $t$, we therefore
get a subset $V^{\Theta_t}(k) \subset V$ which consists of $(kh+1)^n$ points and is stable under the continuous
action of $W \times \ZZ_{kh}$ on $V$.  The right of Figure~\ref{fig:geometry} shows the loci $V^{\Theta_t}(k)$
inside  the product space
$V \times [0, 1]$ as $t \in [0, 1]$ varies; in this case we have $(kh+1)^n = 3$.  The reducedness assumption means that the 
$(kh+1)^n$ points in $V^{\Theta_t}(k)$ trace out $(kh+1)^n$ paths in $V$ and {\em these paths never merge}.
The continuity of the action of $W \times \ZZ_{kh}$ on $V$ means that following our group action along these 
disjoint
paths must give us the desired $W \times \ZZ_{kh}$-set isomorphism
$V^{\Theta_0}(k) \cong_{W \times \ZZ_{kh}} V^{\Theta_1}(k)$.

\section{Acknowledgements}
\label{Acknowledgements}

The author is grateful to Drew Armstrong, Vic Reiner, and Hugh Thomas for many helpful conversations.
The author was partially supported by NSF Grant DMS-1068861.

\end{document}